\newtheorem{theoreme}{Theorem}[section]
\newtheorem{lemme}[theoreme]{Lemma}
\newtheorem{proposition}[theoreme]{Proposition}
\newtheorem{remarque}[theoreme]{Remark}
\newtheorem{corollaire}[theoreme]{Corollary}
\title{On the universal $\mathrm{CH}_0$ group of cubic threefolds in positive characteristic}
\author{Ren\'e Mboro}
\date{}
\begin{document}
\maketitle
\begin{abstract}
We adapt for algebraically closed fields $k$ of characteristic greater than $2$ two results of Voisin, presented in \cite{Vois_main}, on the decomposition of the diagonal of a smooth cubic hypersurface $X$ of dimension $3$ over $\mathbb C$, namely: the equivalence between Chow-theoretic and cohomological decompositions of the diagonal of those hypersurfaces and the equivalence between the algebraicity (with $\mathbb Z_2$-coefficients) of the minimal class $\theta^4/4!$ of the intermediate Jacobian $J(X)$ of $X$ and the cohomological (hence Chow-theoretic) decomposition of the diagonal of $X$.  Using the second result, the Tate conjecture for divisors on surfaces defined over finite fields predicts, via a theorem of Schoen (\cite{Schoen}), that every smooth cubic hypersurface of dimension $3$ over the algebraic closure of a finite field of characteristic $>2$ admits a Chow-theoretic decomposition of the diagonal.
\end{abstract}

\section{Introduction}
Let $k$ be an algebraically closed field and $X$ a smooth $n$-dimensional projective $k$-variety (irreducible). X is said to have universally trivial $\mathrm{CH}_0$ group if for any field $L$ containing $k$, $\mathrm{CH}_0(X_L)=\mathbb Z$. As explained in \cite{auel_colliot}, $X$ has universally trivial $\mathrm{CH}_0$ group if and only if $\mathrm{CH}_0(X_{k(X)})=\mathbb Z$. We recall briefly the idea of the proof; it uses, for any extension $k\subset L$, the action, by the identity, of the correspondence $\Delta_{X_L}$ on $\mathrm{CH}_0(X_L)$ and the equivalence of the two previous properties with a so called Chow-theoretic decomposition of the diagonal. Passing to the limit in the diagonal morphism $X\rightarrow X\times_k X$ over all open subsets $V\subset X$ ($V\rightarrow V\times_k V\hookrightarrow V\times_k X$) yields the diagonal point $\delta_X\in X(k(X))$ (image of the generic point of $X$ by the diagonal morphism). When $\mathrm{CH}_0(X_{k(X)})=\mathbb Z$, the diagonal point is rationally equivalent over $k(X)$ to a constant point $x_{k(X)}=x\times_k k(X)$, with $x\in X(k)$. Then, using the equalities $\mathrm{CH}_0(X_{k(X)})=\mathrm{CH}^n(X_{k(X)})=\varinjlim_{\atop U\subset X\ open} CH^n(U\times_k X)$ and the localization exact sequence, one gets an equality
\begin{equation}\label{ch_decomp_def}
\Delta_X = X\times_k x + Z\ \mathrm{in\ CH}^n(X\times_k X)
\end{equation}
where $Z$ is supported on $D\times_k X$ for some proper closed subset $D$ of $X$ i.e. a Chow theoretic decomposition of the diagonal. Now, letting $L/k$ be an extension, a base change in (\ref{ch_decomp_def}) yields $$\Delta_{X_L} = X_L\times_L x_L + Z_L\ \mathrm{in\ CH}^n(X_L\times_L X_L)$$ so that letting both sides act on $\mathrm{CH}_0(X_L)$ and using the fact that $\Delta_{X_L}$ acts as the identity, one sees that $\mathrm{CH}_0(X_L)=\mathbb Z$. So, having a universally trivial $\mathrm{CH}_0$ is equivalent to the existence of a Chow-theoretic decomposition of the diagonal. Projective spaces have universally trivial $\mathrm {CH}_0$ so stably rational projective varieties also have universally trivial $\mathrm {CH}_0$. Voisin studied in the case $k=\mathbb C$ the a priori weaker property of the existence of a cohomological decomposition of the diagonal
\begin{equation}
[\Delta_X]=[X\times_k x] + [Z]\ in\ H_B^{2n}(X\times_k X,\mathbb Z)
\end{equation}
where $Z\in \mathrm{CH}^n(X\times_k X)$ is supported on $D\times_k X$ for some proper closed subset $D$ of $X$ and the cohomology is the Betti cohomology. In \cite{Vois_main}, she proved, in characteristic zero, that this weaker property is, in fact, equivalent to the existence of a Chow-theoretic decomposition of the diagonal for smooth cubic hypersurfaces of odd dimension ($\geq3$) or of dimension $4$. She also worked out necessary and sufficient conditions for the existence of a cohomological (hence Chow-theoretic) decomposition of the diagonal for cubic threefolds, among other varieties. The second section of this paper is devoted to the proof of the equivalence between cohomological and Chow-theoretic decomposition of the diagonal of a cubic threefold in positive characteristic, greater than $2$:

\begin{theoreme}\label{coh-chow} Let $k$ be an algebraically closed field of characteristic greater than $2$ and $X\subset \mathbb P^4_k$ be a smooth cubic hypersurface. Then $X$ admits a Chow-theoretic decomposition of the diagonal (i.e. has universally trivial $\mathrm{CH}_0$) if and only if it admits a cohomological decomposition of the diagonal with coefficients in $\mathbb Z_2$.
\end{theoreme}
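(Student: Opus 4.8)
The implication from a Chow-theoretic to a cohomological decomposition is formal: applying the $\ell$-adic cycle class map at $\ell=2$, which is available since $\mathrm{char}\,k>2$ forces $2\neq\mathrm{char}\,k$, to the equality (\ref{ch_decomp_def}) in $\mathrm{CH}^3(X\times_k X)$ produces the required identity in $H^6(X\times_k X,\mathbb Z_2(3))$. The substance is the converse. Following Bloch--Srinivas and the reduction recalled in the introduction, a Chow-theoretic decomposition is equivalent to the rational triviality over $k(X)$ of the degree-$0$ zero-cycle $z_0=\delta_X-x_{k(X)}$ cut out by the diagonal point; equivalently, to the triviality modulo cycles supported on $D\times_k X$ of the correction cycle $\Gamma=\Delta_X-X\times_k x-Z\in\mathrm{CH}^3(X\times_k X)$. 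The cohomological hypothesis says exactly that $\Gamma$ is homologically trivial with $\mathbb Z_2$-coefficients, so the plan is to show that this $2$-adic homological triviality already forces $\Gamma$ to be rationally trivial modulo $D\times_k X$, and then to re-globalize through the localization sequence and the limit $\mathrm{CH}_0(X_{k(X)})=\varinjlim_{U}\mathrm{CH}^3(U\times_k X)$.

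The obstruction to rational triviality of the homologically trivial cycle $\Gamma$, beyond its vanishing class, is an Abel--Jacobi / Bloch invariant, and I would locate it in the intermediate Jacobian of $X$. In positive characteristic the complex intermediate Jacobian is replaced by Murre's algebraic representative $J^2(X)$ of $\mathrm{CH}^2(X)_{\mathrm{alg}}$, an abelian variety over $k$ which for a smooth cubic threefold is a $5$-dimensional principally polarized abelian variety, with $\mathrm{CH}^2(X)_{\mathrm{alg}}\xrightarrow{\sim}J^2(X)$ the universal regular homomorphism. The Fano surface of lines $F$ and the universal line $\mathcal L\subset F\times_k X$ yield, through the incidence correspondence, a homomorphism $\mathrm{Alb}(F)\to J^2(X)$, which I would show is an isomorphism in characteristic $>2$ by the $\ell$-adic analogue of the Clemens--Griffiths description. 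Since $H^2(X,\mathbb Z_2(1))$ and $H^4(X,\mathbb Z_2(2))$ are generated by algebraic classes (hyperplane and line), the Künneth analysis of $H^5(X\times_k X,\mathbb Z_2(3))$ shows that the Abel--Jacobi invariant of $\Gamma$ takes values in pieces built from $J^2(X)$ and $H^3(X,\mathbb Z_2(2))$.

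With these identifications the upgrade proceeds by showing the cohomological decomposition forces this invariant to vanish $2$-adically. Acting with the cohomological identity $[\Delta_X]=[X\times_k x]+[Z]$ on the $\ell$-adic realizations, and using that $Z$ is supported on $D\times_k X$, identifies the invariant with an obstruction that the decomposition kills; together with the injectivity of the Abel--Jacobi map and a Roitman-type theorem controlling the $2$-primary torsion of $\mathrm{CH}_0(F)$ (torsion prime to $p$, hence usable at $\ell=2$), this yields $z_0=0$ in $\mathrm{CH}_0(X_{k(X)})$ and hence, by the localization argument above, an equality of the shape (\ref{ch_decomp_def}). The fact that $2$-adic cohomological triviality alone suffices reflects that the relevant obstruction group is $2$-primary, which is also why the coefficients are taken in $\mathbb Z_2$.

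The main obstacle is everything concealed in ``adapt'': over $\mathbb C$ Voisin's argument is run on the integral Hodge structure on $H^3(X,\mathbb Z)$, its polarization and the transcendental Abel--Jacobi map, none of which survive verbatim in characteristic $p$. The delicate points I expect to carry the real work are: (a) constructing the algebraic representative $J^2(X)$ and proving the Abel--Jacobi map is an isomorphism with the required functoriality, using $\ell$-adic realizations and a monodromy/Galois argument in place of Hodge theory; (b) the integral, rather than rational, nature of the conclusion, which forces the use of a Roitman-type torsion statement for the Fano surface and accounts for the hypotheses $\mathrm{char}\,k\neq2$ (so that $\ell$-adic cohomology at $\ell=2$ is a good Weil cohomology and $2$-torsion is controlled) and $\mathbb Z_2$-coefficients; and (c) the spreading-out and specialization needed to pass from the triviality of $z_0$ over $k(X)$ back to a global identity on $X\times_k X$, where complex-analytic continuity must be replaced by algebraic arguments.
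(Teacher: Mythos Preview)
Your proposal has genuine gaps and does not match the paper's argument.

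The paper's proof of the converse does not proceed via an Abel--Jacobi invariant for $\Gamma\in\mathrm{CH}^3(X\times_k X)$ at all. Its architecture is: (a) the degree-$2$ unirational parametrization (Proposition~\ref{unirat}) shows $2\Delta_X$ already decomposes Chow-theoretically, so after restricting to $U_0\times_k X$ the class $\Delta_X-X\times_k x$ is $2$-torsion in $\mathrm{B}^3$; hence it suffices to kill it in $\mathrm{B}^3\otimes\mathbb Z_2$. (b) Via Proposition~\ref{key_point}, the cohomological decomposition produces $\Gamma\in\mathrm{CH}^3(X^{[2]})$ with $[\Gamma]=0$ in $H^6(X^{[2]},\mathbb Z_2)$ and $\mu^*\Gamma=\Delta_X-x\times X-X\times x-W$. (c) Voisin's birational description (Proposition~\ref{sym_gal}) identifies $\widetilde{X^{[2]}}$ with a blow-up of the projective bundle $P_X\to X$ along the universal line $P$; combined with Shen's theorem that lines generate $\mathrm{CH}_1(X)$, one shows that \emph{homological and algebraic equivalence coincide} (with $\mathbb Z_2$ coefficients) on $\widetilde{X^{[2]}}$, so $\Gamma=0$ in $\mathrm{B}^3(X^{[2]})\otimes\mathbb Z_2$. (d) This gives a decomposition modulo algebraic equivalence, and Proposition~\ref{alg-rat} (nilpotence of algebraically trivial self-correspondences) upgrades this to a Chow-theoretic decomposition.

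Your sketch misses each of these steps. First, you never invoke the $2$-torsion coming from unirationality, which is precisely what explains why $\mathbb Z_2$-cohomological triviality suffices; your claim that ``the relevant obstruction group is $2$-primary'' is asserted, not derived. Second, and more seriously, the ``Abel--Jacobi/Bloch invariant'' you allude to is for a codimension-$3$ cycle on the $6$-fold $X\times_k X$; there is no algebraic representative available in this codimension, and the Künneth analysis of $H^5(X\times_k X,\mathbb Z_2)$ you describe governs only the cohomology class, not the finer obstruction to rational triviality. The statement that the cohomological identity ``identifies the invariant with an obstruction that the decomposition kills'' is circular: the decomposition \emph{is} the cohomological identity, and you have not exhibited any further invariant it controls. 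Third, you give no mechanism for passing from any intermediate triviality statement back to \emph{rational} equivalence; the paper uses the nilpotence theorem (Voevodsky/Voisin) for this, which you do not mention. Finally, Roitman's theorem for $\mathrm{CH}_0(F)$ controls torsion zero-cycles on the Fano surface, not the self-correspondence $\Gamma$ on $X\times_k X$; it is unclear how it would enter.

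In short: the paper's key idea is to transport the problem to $X^{[2]}$, where the geometry of lines forces $\mathrm{hom}=\mathrm{alg}$ with $\mathbb Z_2$ coefficients, and then to use nilpotence. Your Abel--Jacobi route, as written, does not supply an actual invariant or a mechanism to reach rational equivalence.
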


\begin{remarque} (1) In the Betti setting, having a cohomological decomposition of the diagonal with $\mathbb Z$ coefficients is equivalent to having a cohomogical decomposition of the diagonal with coefficients in $\mathbb Z/2\mathbb Z$ since $2\Delta_X$ has a Chow-theoretic decomposition (see Proposition \ref{unirat}). This is why in our case, only \'etale cohomology with $\mathbb Z_2$ coefficients is used. In fact, $\mathbb Z/2\mathbb Z$ would also do.\\
\indent \indent \indent \indent \indent (2) Theorem \ref{coh-chow} could have been stated also for cubic fourfolds and odd dimensional cubic of higher dimension since the proof given by Voisin adapts to the positive characteristic setting. But since in the case of cubic threefolds we can give a shorter proof and it is the only case where we have an interesting application in sight, we state the theorem only in this case.
\end{remarque}
We also check that there is a criterion, similar to the one given in \cite[Theorem 4.1]{Vois_main} and \cite[Theorem 4.9]{Vois_abel} in characteristic zero, for the existence of a cohomological decomposition of the diagonal of a cubic threefold.
\begin{theoreme}\label{criterion} Let $X\subset \mathbb P^4_k$ be a smooth cubic hypersurface ($k=\bar k$ and $char(k)>2$). Then $X$ admits a cohomological (hence Chow-theoretic) decomposition of the diagonal if and only if the principal polarization, $\theta$, of the associated Prym variety, $J(X)$, satisfies the following property: there is a cycle $Z\in \mathrm{CH}_1(J(X))\otimes \mathbb Z_2$ such that $[Z]=\frac{[\theta]^4}{4!}$ in $H^8(J(X),\mathbb Z_2)$.
\end{theoreme}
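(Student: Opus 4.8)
The plan is to exploit Theorem~\ref{coh-chow}, which reduces the problem to characterizing when $X$ admits a \emph{cohomological} decomposition of the diagonal, and then to follow the strategy used by Voisin over $\mathbb{C}$, translating the Hodge-theoretic arguments into the language of $\ell$-adic étale cohomology with $\ell=2$. The key geometric input is the theory of the Abel--Jacobi map. For a cubic threefold $X$, the natural target is the intermediate Jacobian $J(X)$, which in positive characteristic is constructed as the Prym variety associated to a conic bundle structure (or to the family of lines on $X$); it carries a principal polarization $\theta$, and there is a cylinder-type correspondence $\Phi \in \mathrm{CH}^2(X \times J(X))$ induced by the Abel--Jacobi map on $1$-cycles. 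The first step I would carry out is to make precise the étale-cohomological incarnation of this correspondence: $\Phi$ induces an isomorphism $H^3_{\text{ét}}(X,\mathbb{Z}_2(1)) \xrightarrow{\sim} H^1_{\text{ét}}(J(X),\mathbb{Z}_2)$ of $\mathbb{Z}_2$-modules (compatible with the cup-product/intersection pairings), matching the intersection form on $X$ with the polarization $\theta$ on $J(X)$, exactly as in the complex case but now with the cycle-class map into $2$-adic cohomology replacing the Betti realization.

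With this dictionary in place, the core of the argument is to transport the class $\frac{[\theta]^4}{4!} \in H^8_{\text{ét}}(J(X),\mathbb{Z}_2)$ back to $X$ and identify the algebraicity of the minimal class with the splitting of the diagonal. Concretely, I would analyze the class of the diagonal $[\Delta_X] \in H^6_{\text{ét}}(X \times X, \mathbb{Z}_2(3))$ via the Künneth decomposition: since $X$ is a cubic threefold its cohomology is supported in degrees $0, 2, 3, 4, 6$ with the only ``non-algebraic'' part sitting in $H^3$, the obstruction to writing $[\Delta_X] = [X \times x] + [Z]$ with $Z$ supported on $D \times X$ lives entirely in the Künneth component $H^3(X) \otimes H^3(X)$ (up to the easily-handled pieces from the algebraic classes in even degree, which contribute trivially after restricting $\Phi$ to the primitive part). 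The statement to prove is therefore the equivalence: the $H^3 \otimes H^3$ component of $[\Delta_X]$ is annihilated by a cycle supported over a divisor if and only if $\Phi$ carries it to an algebraic class on $J(X)$. The plan is to show, using the compatibility of $\Phi_*$ with the polarization, that under the isomorphism $H^3(X) \cong H^1(J(X))$ the class of the diagonal component corresponds precisely to the class $\frac{[\theta]^4}{4!}$, so that algebraicity of one is equivalent to algebraicity of the other.

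The technical engine here is a ``minimal class'' computation identical in form to Voisin's: one uses that the theta divisor $\theta$ of the principally polarized abelian variety $J(X)$ of dimension $5$ satisfies $\frac{[\theta]^5}{5!} = [\mathrm{pt}]$, and that the cylinder map sends the class $\frac{[\theta]^4}{4!} \in H^8(J(X))$ (Poincaré-dual to a curve class) to (a multiple of) the diagonal's interesting part. I would invoke the standard Poincaré formula for the classes of the images of symmetric products, now formulated cohomologically over $\mathbb{Z}_2$ rather than in $\mathbb{Z}$-coefficient Betti cohomology; the integral/$2$-adic subtlety is why the statement is phrased with $\mathbb{Z}_2$-coefficients rather than $\mathbb{Q}_2$. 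The equivalence then reads: a cohomological decomposition of the diagonal exists $\iff$ the minimal class $\frac{[\theta]^4}{4!}$ is algebraic, i.e.\ equal to $[Z]$ for some $Z \in \mathrm{CH}_1(J(X)) \otimes \mathbb{Z}_2$; combined with Theorem~\ref{coh-chow} this upgrades to the Chow-theoretic statement.

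The main obstacle I anticipate is \emph{not} the formal cohomological bookkeeping but the passage to $\mathbb{Z}_2$-coefficients where torsion phenomena obstruct the clean Hodge-theoretic splitting available over $\mathbb{C}$. In characteristic zero one works with $\mathbb{Z}$-Betti cohomology and can invoke the integral Hodge structure and the universal coefficient theorem freely; here I must control the $2$-adic cohomology and in particular verify that the relevant groups $H^3_{\text{ét}}(X,\mathbb{Z}_2)$ and $H^1_{\text{ét}}(J(X),\mathbb{Z}_2)$ are torsion-free (this holds for abelian varieties and, via the correspondence, for the primitive $H^3$ of the cubic threefold, since $p > 2$), so that the isomorphism induced by $\Phi$ is genuinely an isomorphism of $\mathbb{Z}_2$-lattices and the minimal-class computation survives reduction. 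The essential verification is that the cycle-class map and the Poincaré formula behave $2$-adically exactly as their Betti counterparts, which requires the characteristic assumption $p > 2$ to rule out wild inseparability in the Prym construction and to ensure smoothness of the relevant Fano scheme of lines; I expect this compatibility, rather than any new geometric idea, to be where the real work lies.
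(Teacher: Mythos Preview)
Your outline—K\"unneth decomposition of $[\Delta_X]$, isolation of the $H^3\otimes H^3$ component, and transport to $J(X)$ via the isomorphism $(H^3(X,\mathbb Z_2),\langle\cdot,\cdot\rangle_X)\simeq(H^1(J(X),\mathbb Z_2),\theta)$—matches the paper. The gap is in the direction ``$\theta^4/4!$ algebraic $\Rightarrow$ decomposition'', and it is not the $2$-adic bookkeeping you flag. You assume a cylinder-type correspondence $\Phi\in\mathrm{CH}^2(X\times J(X))$ through which algebraic classes can be pushed and pulled. No such cycle is available a priori: the Abel--Jacobi formalism produces, for any $B$ carrying a family $\mathcal Z\in\mathrm{CH}^2(B\times X)$, a \emph{morphism} $B\to J(X)$, not a cycle on $J(X)\times X$. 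The obvious candidate—the universal line over the Fano surface $F(X)$—lives over a $2$-dimensional base whose Albanese map into the $5$-dimensional $J(X)$ is far from dominant, so a general curve $C_i\subset J(X)$ cannot be lifted there. Given an algebraic $1$-cycle $\sum m_iC_i$ on $J(X)$ of class $\theta^4/4!$, you therefore have no mechanism to produce the required cycle on $X\times X$ supported on $D\times X$.

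The paper supplies this missing ingredient as condition $(*)$: a smooth $B$ with $\mathcal Z\in\mathrm{CH}^2(B\times X)$ such that the Abel--Jacobi map $\phi_{\mathcal Z}:B\to J(X)$ is dominant with general fiber $\mathbb P^5$. Establishing $(*)$ in positive characteristic is the main technical contribution (Section~4), carried out via the space of stable normal elliptic quintics on $X$. With $(*)$ in hand, each $C_i$ lifts to $B$ by a section over its normalization (Tsen's theorem, since the fibers are rational), giving $Z_i\subset\widetilde C_i\times X$; then $Z=\sum m_i\,Z_i\circ{}^tZ_i$ is supported on a divisor times $X$ and acts as the identity on $H^3(X,\mathbb Z_2)$. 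The converse direction in the paper also proceeds differently, via Theorem~\ref{tech_var_aux} (which needs resolution of singularities for threefolds) to extract curves $Z_i$ with correspondences $\Gamma_i\in\mathrm{CH}^2(Z_i\times X)$, and then maps these to $J(X)$ by Abel--Jacobi. In short, you have misidentified the obstacle: the $2$-adic compatibilities are routine, whereas building the dominant parametrization of $J(X)$ with rationally connected fibers in characteristic $p$ is where the real work lies.
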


Using Theorem \ref{criterion} and a theorem of C. Schoen (see \cite{colliot_sz}, \cite{Schoen}), we get the following consequence:
\begin{theoreme}\label{cor_intro} On an algebraic closure of a finite field of characteristic greater than $2$, assuming the Tate conjecture for divisors on surfaces, every smooth cubic hypersurface of dimension $3$ has universally trivial $\mathrm{CH}_0$ group.\\
\end{theoreme}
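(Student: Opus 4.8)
The plan is to reduce the statement, via the two preceding theorems, to the algebraicity of the minimal class on $J(X)$, and then to obtain that algebraicity from Schoen's theorem using the assumed Tate conjecture. By Theorem \ref{criterion}, the cubic threefold $X$ admits a cohomological --- hence, by Theorem \ref{coh-chow}, a Chow-theoretic --- decomposition of the diagonal as soon as there is a cycle $Z\in \mathrm{CH}_1(J(X))\otimes \mathbb Z_2$ with $[Z]=\frac{[\theta]^4}{4!}$ in $H^8(J(X),\mathbb Z_2)$; and, as recalled in the introduction, having a Chow-theoretic decomposition of the diagonal is equivalent to universal triviality of $\mathrm{CH}_0(X)$. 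So everything comes down to the $\mathbb Z_2$-algebraicity of the minimal class $\frac{[\theta]^4}{4!}$ on the $5$-dimensional (Prym) abelian variety $J(X)$.

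First I would record the two properties of this class that are needed to feed Schoen's theorem: that it is an integral class and that it is a Tate class. Integrality is the usual computation in a symplectic basis $(e_i,f_i)_{1\le i\le 5}$ of $H^1(J(X),\mathbb Z_2)$, which exists because $\theta$ is principal, so the associated alternating pairing is unimodular over $\mathbb Z_2$: writing $[\theta]=\sum_i e_i\wedge f_i$ and using $(e_i\wedge f_i)^2=0$, one gets $\frac{[\theta]^4}{4!}=\sum_{j=1}^5 \bigwedge_{i\neq j}(e_i\wedge f_i)$, a sum of basis monomials, hence a genuine class in $H^8(J(X),\mathbb Z_2)$. For the Tate property, I would descend $X$, and with it the polarized abelian variety $(J(X),\theta)$, to a finite field $\mathbb F_q\subset k=\bar{\mathbb F}_p$; since $\frac{[\theta]^4}{4!}$ is a polynomial in the Frobenius-invariant polarization class $[\theta]$, it is fixed by the Frobenius of $\mathbb F_q$ and is therefore a Tate class.

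Then I would invoke Schoen's theorem (\cite{Schoen}, \cite{colliot_sz}): granting the Tate conjecture for divisors on smooth projective surfaces over finite fields, for every smooth projective variety over $\bar{\mathbb F}_p$ and every prime $\ell\neq p$, the cycle class map $\mathrm{CH}_1(-)\otimes \mathbb Z_\ell\to H^{2d-2}(-,\mathbb Z_\ell)$ (with $d$ the dimension) surjects onto the subgroup of Tate classes. Here $d=\dim J(X)=5$, so $2d-2=8$, and I take $\ell=2$, which is a prime distinct from $\mathrm{char}(k)$ exactly because $\mathrm{char}(k)>2$. Applied to the Tate class $\frac{[\theta]^4}{4!}$, this produces a cycle $Z\in \mathrm{CH}_1(J(X))\otimes \mathbb Z_2$ with $[Z]=\frac{[\theta]^4}{4!}$, and Theorems \ref{criterion} and \ref{coh-chow} then finish the argument.

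The essential input is entirely Schoen's theorem --- the passage from a Tate class to an honest $\mathbb Z_2$-algebraic $1$-cycle --- so the only remaining points are bookkeeping rather than genuine obstacles: choosing the finite field of definition of $(J(X),\theta)$ large enough that $\frac{[\theta]^4}{4!}$ is literally Frobenius-invariant (not merely invariant after a further finite extension), and checking that the form of the Tate conjecture assumed in the statement is exactly the hypothesis required by Schoen's theorem. Once Theorems \ref{criterion} and \ref{coh-chow} are in hand, the proof is therefore short.
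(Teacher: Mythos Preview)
Your proposal is correct and follows essentially the same route as the paper: reduce to the $\mathbb Z_2$-algebraicity of $\theta^4/4!$ via Theorem~\ref{criterion}, observe that this class is a Tate class (the paper phrases this as ``$\theta^4$ is algebraic, hence $\theta^4/4!$ is Galois-invariant''), and then apply Schoen's theorem. The only difference is that you spell out the integrality of $\theta^4/4!$ via a symplectic basis, which the paper leaves implicit.
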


In the Betti setting, a key feature in the proof of the criterion was the existence of a parametrization of the intermediate Jacobian of cubic threefolds with separably rationally connected general fiber, namely the condition:\\

\indent (*) there exist a smooth quasi-projective $k$-variety $B$ and a correspondence $Z\in \mathrm{CH}^2(B\times_k X)$ with $Z_b\in \mathrm{CH}^2(X)$ trivial modulo algebraic equivalence for any $b\in B(k)$ such that the induced Abel-Jacobi morphism $\phi:B\rightarrow J(X)$ to the intermediate Jacobian $J(X)$ of the cubic threefold $X$ is dominant with $\mathbb P^5$ as general fiber.\\

\indent In Section $3$, after a reminder on the definition of the intermediate Jacobian of a cubic threefold and Abel-Jacobi morphisms in the positive characteristic setting, we prove Thereom \ref{criterion} and Theorem \ref{cor_intro} under the assumption that (*) is still true in our setting.\\
\indent Over the complex numbers, such a parametrization was constructed by Iliev-Markushevich and Markuschevich-Tikhomirov (\cite{Mar-Tik} and \cite{Ili-Mar}, see also \cite{Druel}) using the space of smooth normal elliptic quintics lying on the cubic hypersurface. Section $4$ will be devoted to proving that we still have (*) using the space of stable normal elliptic quintics.\\

\indent For a variety $X$, the cohomology groups $H^i(X,\mathbb Z_{\ell})$ will be the \'etale cohomology groups and $H^i_B(X,\mathbb Z_{\ell})$, if $X$ is defined over a field $K\hookrightarrow \mathbb C$ of characteristic $0$, will be the Betti cohomology group of $X^{an}_{\mathbb C}$.\\
\indent Throughout this text, $k$ will denote an algebraically closed field of characteristic $>2$.\\
\indent For a smooth projective $k$-variety $X$, $\langle\cdot,\cdot\rangle_X$ will denote the intersection pairing on $H^{*}(X,\mathbb Z_{\ell})$ induced by the cup-product and the trace map. We will use the following standard facts about cubic hypersurfaces in $\mathbb P^4_k$. Let $X$ be a smooth cubic hypersurface in $\mathbb P^4_k$.
\begin{enumerate}\label{reminder_on_cubic}
\item From the exact sequence $$0\rightarrow \mathcal O_{\mathbb P^4_k}(-3) \rightarrow \mathcal O_{\mathbb P^4_k} \rightarrow \mathcal O_X\rightarrow 0$$ we have $\omega_X\simeq \mathcal O_X(-2)$ and $h^i(\mathcal O_X(k))=0$ for $i\in\{1,2\},$ $k\in \mathbb Z.$\\
\item The Fano variety of lines $F(X)=\{[l]\in Gr(2,5),\ l\subset X\}$ of $X$ is a smooth projective surface.\\
\item By Lefschetz hyperplane theorem, for $\ell\neq p$ the $\ell$-adic cohomology of $X$ is $$H^0(X,\mathbb Z_{ \ell}) = \mathbb Z_{\ell}\cdot[X], \ \ H^2(X,\mathbb Z_{\ell}(1)) = \mathbb Z_{\ell}\cdot[\mathcal O_X(1)], \ \ H^4(X,\mathbb Z_{\ell}(2))=\mathbb Z_{\ell}\cdot[l], \ \ H^6(X, \mathbb Z_{\ell}(3)) = \mathbb Z_{\ell}\cdot[x],$$ $$ H^1(X,\mathbb Z_{\ell})=0=H^5(X,\mathbb Z_{\ell})$$ where $[l]$ is the class of a line $[l]\in F(X).$
\item Using, for example, a smooth proper lifting of $X$ to characteristic zero (\cite[Section 20]{Milne_et_coh}), we have $H^3(X,\mathbb Z_{\ell})\simeq \mathbb Z_{\ell}^{10}.$ By Grothendieck-Lefschetz theorem, $\mathrm{Pic}(X)\simeq \mathbb Z \cdot[\mathcal O_X(1)]$.
\end{enumerate}

\section{Chow-theoretic and $\mathbb Z_2$-cohomological decomposition of the diagonal}

In this section, for a $k$-variety $Y$, $\mathrm{B}^i(Y)$ will designate the Chow group of codimension $i$ cycle modulo algebraic equivalence. We prove in this section Theorem \ref{coh-chow}, adapting arguments of Voisin presented in \cite{Vois_main} to the positive characteristic case. The key point is to prove that one can derive a decomposition of the diagonal modulo algebraic equivalence from a cohomological decomposition of the diagonal $\Delta_X$ of a smooth cubic hypersurface $X$. Then we use the following proposition to obtain a Chow-theoretic decomposition of the diagonal:
\begin{proposition}\label{alg-rat}
Let $X$ be a smooth projective $k$-variety of dimension $n$. Suppose there exists $Z\in \mathrm{CH}^n(X\times_k X)$, supported on $D\times_k X$ for some proper closed subset $D\subset X$, and $x\in X(k)$ such that $$\Delta_X - X\times x = Z\ \mathrm{in\ B}^n(X\times_k X).$$ Then $X$ admits a Chow-theoretic decomposition of the diagonal.
\end{proposition}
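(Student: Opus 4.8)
The plan is to let both sides of the hypothesised identity act as correspondences on the group of degree-zero $0$-cycles over a large algebraically closed field, and to exploit the fact that an algebraically trivial self-correspondence cannot induce the identity on a nonzero such group.

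First I would reduce the statement to a triviality over a universal domain. By the equivalences recalled in the introduction, $X$ admits a Chow-theoretic decomposition of the diagonal if and only if its $\mathrm{CH}_0$ is universally trivial, and this in turn holds as soon as $\mathrm{CH}_0(X_\Omega)=\mathbb Z$ for one universal domain $\Omega\supseteq k$ (an algebraically closed field of infinite transcendence degree over $k$). So it suffices to prove $\mathrm{CH}_0(X_\Omega)_0=0$, the subscript denoting the degree-zero part. Now set $W:=\Delta_X - X\times x - Z\in \mathrm{CH}^n(X\times_k X)$, which by hypothesis lies in the subgroup of algebraically trivial cycles, and base change to $\Omega$. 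Letting the three correspondences $\Delta_X$, $X\times x$ and $Z$ act through $_*$ on $\mathrm{CH}_0(X_\Omega)_0$, the diagonal acts as the identity, the correspondence $X\times x$ sends a $0$-cycle to $(\deg)\cdot x$ and hence vanishes on the degree-zero part, and, since $Z$ is supported on $D\times_k X$ with $D\subsetneq X$, the moving lemma lets me represent any class in $\mathrm{CH}_0(X_\Omega)_0$ by a cycle disjoint from $D_\Omega$, so that $Z_*=0$ there. Reading the identity $\Delta_X = X\times x + Z + W$ off on $\mathrm{CH}_0(X_\Omega)_0$ then gives $W_*=\mathrm{id}$.

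The heart of the argument is the behaviour of an algebraically trivial correspondence on $0$-cycles, for which two facts are needed. First, the composite $\mathrm{alb}_X\circ W_*$ is a regular homomorphism $\mathrm{CH}_0(X_\Omega)_0\to \mathrm{Alb}_X(\Omega)$, so by the universal property of the Albanese it descends to an endomorphism $\psi$ of $\mathrm{Alb}_X$; as $W$ is algebraically, hence homologically, trivial, $\psi$ acts as $0$ on $H^1(X,\mathbb Z_\ell)\otimes\mathbb Q_\ell$, and therefore $\psi=0$ by faithfulness of the $\ell$-adic realisation ($\ell=2\neq p$). Thus $W_*$ maps $\mathrm{CH}_0(X_\Omega)_0$ into the Albanese kernel, and combined with $W_*=\mathrm{id}$ this already forces $\mathrm{Alb}_X=0$. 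Second, and more generally, an algebraically trivial self-correspondence acts nilpotently on $\mathrm{CH}_0(X_\Omega)_0\otimes\mathbb Q$ (the previous step being the first layer of a finite filtration that $W_*$ strictly raises); since the identity is not nilpotent on a nonzero group, this yields $\mathrm{CH}_0(X_\Omega)_0\otimes\mathbb Q=0$. Finally $\mathrm{Alb}_X=0$ together with Roitman's theorem (valid on the prime-to-$p$ torsion) shows $\mathrm{CH}_0(X_\Omega)_0$ is torsion free, whence $\mathrm{CH}_0(X_\Omega)_0=0$ and $\mathrm{CH}_0(X_\Omega)=\mathbb Z$, which by the first step produces the integral Chow-theoretic decomposition.

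I expect the main obstacle to be the nilpotence statement in positive characteristic: one must produce a finite filtration on $\mathrm{CH}_0(X_\Omega)_0$ that every algebraically trivial correspondence raises, which is precisely where the adaptation of Voisin's arguments, and the positive-characteristic inputs (existence and universal property of the Albanese, faithfulness of the $\ell$-adic realisation, and Roitman's theorem), have to be handled with care. The reduction to the universal domain and the moving-lemma computation of $W_*=\mathrm{id}$ are routine by comparison.
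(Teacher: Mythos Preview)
Your overall strategy---set $W=\Delta_X-X\times x-Z$, compute that $W_*=\mathrm{id}$ on degree-zero $0$-cycles, and then argue that $W_*$ is nilpotent---is the right shape and matches how the argument actually goes. The gap is in how you propose to obtain the nilpotence. You invoke a finite filtration on $\mathrm{CH}_0(X_\Omega)_0$ that every algebraically trivial correspondence strictly raises; beyond the first (Albanese) layer this is a Bloch--Beilinson-type statement, and no such filtration is known in general. So the step you single out as ``the main obstacle'' is in fact not available as stated.

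The input the paper uses (and explicitly cites) is the nilpotence theorem of Voevodsky and Voisin: a self-correspondence $W\in\mathrm{CH}^n(X\times_k X)$ algebraically equivalent to zero satisfies $W^{\circ N}=0$ in $\mathrm{CH}^n(X\times_k X)$ for some $N$, with no hypothesis on $\mathrm{char}(k)$. This is a statement about the correspondence itself, not about its action on a particular Chow group, and it gives $(W_*)^N=0$ on $\mathrm{CH}_0(X_L)$ for every extension $L/k$, \emph{integrally}. Plugging this into your (correct) computation that $W_*$ fixes $\delta_X-x_{k(X)}\in\mathrm{CH}_0(X_{k(X)})_0$ yields $\delta_X=x_{k(X)}$, i.e.\ the Chow-theoretic decomposition, in one line. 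The detour through $\mathbb Q$-coefficients, $\mathrm{Alb}_X=0$, and Roitman's theorem then becomes unnecessary, which also disposes of the $p$-torsion issue you flag. In short: keep your reduction and your computation of $W_*=\mathrm{id}$, but replace the filtration argument by the correspondence-level nilpotence theorem.
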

\begin{proof} It is proposition 2.1 of \cite{Vois_main} since even in positive characteristic, cycles algebraically equivalent to $0$ are nilpotent for the composition of self-correspondences (see \cite{Vois_nilp} and \cite{Voe} which use no assumptions on $char(k)$).
\end{proof}

We recall the following classical fact on a cubic hypersurface.
\begin{proposition}\label{unirat} Let $X$ be a smooth cubic hypersurface of dimension $3$. Then $X$ admits a degree $2$ dominant rational map $\mathbb P^3\dashrightarrow X$. It follows that $2\Delta_X$ admits a decomposition $$2\Delta_
X = 2(X\times_k x) + Z\ in\ \mathrm{CH}^3(X\times_k X)$$ where $x\in X(k)$ and $Z$ is supported on $D\times_k X$ for some divisor $D\varsubsetneq X.$
\end{proposition}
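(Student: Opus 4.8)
The plan is to prove Proposition \ref{unirat} in two stages: first construct the degree-$2$ dominant rational map $\mathbb{P}^3 \dashrightarrow X$, and second deduce the decomposition of $2\Delta_X$ from the existence of this map. For the first stage, I would use the classical projection-from-a-line construction. Let me sketch both stages and identify where the real work lies.

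For the construction of the map, I would fix a line $l \subset X$ (which exists since $F(X)$ is a nonempty smooth projective surface, by fact (2) in the reminder). Consider the projection $\pi_l \colon \mathbb{P}^4 \dashrightarrow \mathbb{P}^2$ away from $l$, whose fibers are the $2$-planes containing $l$. For a general such plane $P$, the intersection $P \cap X$ is a plane cubic curve containing $l$ as a component, so $P \cap X = l \cup C_P$ where $C_P$ is a residual conic. This exhibits $X \setminus l$ (roughly) as a conic bundle over $\mathbb{P}^2$. The key point is that each residual conic $C_P$ meets $l$ in two points, and parametrizing the pairs (plane $P$, point of $C_P \cap l$) gives a double cover. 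More precisely, I would consider the blow-up or the incidence variety and show that sending a general point of $X$ to the pair consisting of its projection in $\mathbb{P}^2$ together with the residual intersection data realizes $X$ as rationally dominated by $\mathbb{P}^3$ with generic degree $2$; the ``$\mathbb{P}^3$'' arises because a smooth cubic threefold is unirational of degree $2$ via this line-projection. I expect this first stage to be essentially classical and to go through verbatim in characteristic $>2$: the only characteristic-sensitive point is the separability of the degree-$2$ map, which holds once $\mathrm{char}(k) \neq 2$ (separability of the residual double cover).

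For the second stage, I would turn the rational map $f \colon \mathbb{P}^3 \dashrightarrow X$ of degree $2$ into the stated cycle-theoretic identity. Let $\Gamma \subset \mathbb{P}^3 \times_k X$ be the closure of the graph of $f$, and let $p, q$ denote the two projections. Since $f$ is dominant of degree $2$, the composite correspondence $q_* p^* \colon \mathrm{CH}_0(\mathbb{P}^3) \to \mathrm{CH}_0(X)$ sends a general point to a $0$-cycle of degree $2$, and more structurally the transpose correspondence ${}^t\Gamma \in \mathrm{CH}^3(X \times_k \mathbb{P}^3)$ composed with $\Gamma$ acts on $\mathrm{CH}_0$ as multiplication by $2$ up to terms supported over a divisor. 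The cleanest way to produce the displayed identity is to use $\mathrm{CH}_0(\mathbb{P}^3_{k(X)}) = \mathbb{Z}$: working over the generic point of $X$, the pullback of the diagonal point $\delta_X$ under $f$ is a $0$-cycle of degree $2$ on $\mathbb{P}^3_{k(X)}$, hence rationally equivalent to $2$ times a $k(X)$-rational point, which pushes forward to give $2\delta_X = 2\,x_{k(X)}$ in $\mathrm{CH}_0(X_{k(X)})$ modulo the image of cycles supported over a divisor. Spreading out this equality over $X$ via the localization sequence (exactly as in the introduction's derivation of \eqref{ch_decomp_def}) yields $2\Delta_X = 2(X \times_k x) + Z$ with $Z$ supported on $D \times_k X$, $D \subsetneq X$ a divisor.

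The main obstacle, I expect, is making the degree-$2$ and the ``spreading out'' arguments rigorous rather than the geometry of the map itself. Concretely, one must ensure that the indeterminacy locus of $f$ and the ramification/branch locus get absorbed into the divisorial term $Z$ supported on $D \times_k X$, and that the factor $2$ (rather than a different multiple) is exactly the generic degree. Since this is stated as a ``classical fact,'' the cleanest exposition would cite the standard unirationality of smooth cubic threefolds of degree $2$ and then apply the generic-point/localization argument verbatim; I do not anticipate any genuinely new difficulty in characteristic $>2$ beyond checking that $\mathrm{char}(k) \neq 2$ guarantees the separability needed for the degree to literally be $2$ (the hypothesis under which the whole paper operates).
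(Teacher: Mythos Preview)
Your overall strategy is sound, but it diverges from the paper's proof in both stages, and your Stage~1 sketch is not quite right. For the construction of the degree-$2$ map the paper uses the classical \emph{tangent-line} construction (following \cite[Appendix~B]{C-G}): for a general line $l_0\subset X$, the rational map $\mathbb P(T_{X|l_0})\dashrightarrow X$ sending $(x,[v])$ to the residual point of $\langle x,v\rangle\cap X$ is generically $2{:}1$, and $\mathbb P(T_{X|l_0})$ is a $\mathbb P^2$-bundle over $\mathbb P^1$, hence rational. Your conic-bundle description (``parametrizing the pairs $(P,\text{point of }C_P\cap l)$'') produces only a double cover of $\mathbb P^2$, not a degree-$2$ map from a rational threefold to $X$; as written it does not yield the $\mathbb P^3$ you need. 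Since you correctly flag this step as classical, this is an imprecision rather than a fatal gap, but you should replace it by the tangent-line construction.

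For Stage~2 the paper takes a different route from yours: it invokes resolution of singularities for threefolds in positive characteristic (Cossart--Piltant) to replace the rational map by an actual morphism $\phi:\Gamma\to X$ of degree $2$ with $\Gamma$ smooth and birational to $\mathbb P^3$, then uses the Colliot-Th\'el\`ene--Coray lemma to get $\mathrm{CH}_0^0(\Gamma_{k(X)})=0$, so that $\phi_{k(X),*}\phi_{k(X)}^*(\delta_X-x_{k(X)})=2(\delta_X-x_{k(X)})$ factors through zero. Your approach via the graph correspondence $\Gamma\subset\mathbb P^3\times X$ and the triviality of $\mathrm{CH}_0(\mathbb P^3_{k(X)})$ is a legitimate and more elementary alternative that avoids both of those inputs; however, you must justify the key identity $\Gamma_*{}^t\Gamma_*(\delta_X)=2\delta_X$ (not merely ``$2\delta_X$ modulo divisorial terms''). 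This holds because $\Gamma$ is irreducible, $p:\Gamma\to\mathbb P^3$ is an isomorphism over the domain of $f$, and $q^{-1}(\eta_X)$ lies over that domain---so the potential correction terms from the indeterminacy locus vanish after restriction to the generic point. You gesture at this but do not say it; make it explicit and your argument is complete.
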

\begin{proof}[Sketch of proof.]The first fact is classical and is presented, for example in appendix B of \cite{C-G}. We recall briefly the construction of the degree $2$ map from a rational variety. Let $l_0$ be a general line in $X$ then the map $P(T_{X|l_0})\dashrightarrow X$  taking a point $(x,v)$ (with $x\in l_0$ and $v\in P(T_{X,x})$) such that the line $\langle x,v\rangle$ is tangent to $X$ at $x$ but not contained in $X$, to the other point of the intersection $X\cap \langle x,v\rangle$, is generically finite and $2:1$.\\
\indent So we have a rational map $\varphi:\mathbb P^3_k\dashrightarrow X$ of degree $2$. Since resolution of singularities for threefolds exists in $char(k)>0$ by work of Cossart and Piltant (\cite{cos-pil1} and \cite{cos-pil2}), there is a smooth projective $k$-variety $\Gamma$, with a birational morphism $p:\Gamma\rightarrow \mathbb P^3_k$ and a degree $2$ morphism $\phi:\Gamma\rightarrow X$, resolving the indeterminacies of $\varphi$. We have the following lemma:
\begin{lemme}\textit{(\cite[Proposition 6.3]{Colliot-Coray}).} Let $f:Z\rightarrow Y$ be a birational morphism of smooth geometrically integral projective varieties over a field $L$. Then $\mathrm{CH}^0_0(Z)\simeq \mathrm{CH}^0_0(Y)$, where $\mathrm{CH}^0_0(T)$, for a proper $L$-variety $T$, is the group of $0$-cycles of degree $0$.
\end{lemme} 
Applying the lemma to the morphism obtained from $p$ by base change $p_{k(\mathbb P^3_k)}:\Gamma_{k(\mathbb P^3_k)}\rightarrow \mathbb P^3_{k(\mathbb P^3_k)}$ yields $\mathrm{CH}_0(\Gamma_{k(\mathbb P^3_k)})\simeq \mathbb Z$ i.e. $\Gamma$ has universally trivial $\mathrm{CH}_0$ group. Then, by base change we have the morphism $\phi_{k(X)}:\Gamma_{k(X)}\rightarrow X_{k(X)}$. The $0$-cycle of $X_{k(X)}$, $\delta_X - k(X)\times_k x$ has degree $0$, where $\delta_X\in X(k(X))$ is the diagonal point (the image of the generic point of $X$ by the diagonal morphism) and $x\in X(k)$. Since $\phi_{k(X)}$ is a degree $2$ proper morphism, we have $\phi_{k(X),*}\phi_{k(X)}^*(\delta_X - k(X)\times_k x)=2(\delta_X - k(X)\times_k x)$ but since that operation factors through $\mathrm{CH}^0_0(\Gamma_{k(X)})$, which zero, we have $2(\delta_X - k(X)\times_k x)=0$.\\
\end{proof}

For a smooth projective $k$-variety, the second punctual Hilbert scheme $X^{[2]}$ of $X$ is obtained as the quotient of the blow-up $\widetilde{X\times_k X}$ of $X\times_k X$ along the diagonal by its natural involution. Let $\mu:X\times_k X\dashrightarrow X^{[2]}$ be the natural rational map and $r:\widetilde{X\times_k X}\rightarrow X^{[2]}$ be the quotient morphism. We collect some results of \cite{Vois_main} whose proofs are essentially the same. So we just mention, when needed, the change needed or the facts required in characteristic $p>2$:

\begin{lemme}\textit{(\cite[Lemma 2.3]{Vois_main}).} Let $X$ be a smooth projective variety of dimension $n$. Then there exists a codimension $n$ cycle $Z$ in $X^{[2]}$ such that $\mu^{*}Z= \Delta_X$ in $\mathrm{CH}^n(X\times_k X)$.
\end{lemme}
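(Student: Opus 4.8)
The plan is to produce $\Delta_X$ as $\mu^{*}$ of an explicit $n$-th power of a divisor class on $X^{[2]}$, using that in characteristic $\neq 2$ the morphism $r$ is an honest ramified double cover. Write $\tau:\widetilde{X\times_k X}\to X\times_k X$ for the blow-down of the diagonal, $E$ for its exceptional divisor, $j:E\hookrightarrow \widetilde{X\times_k X}$ and $i:\Delta_X\hookrightarrow X\times_k X$ for the inclusions, and $g=\tau|_E:E=\mathbb P(N)\to\Delta_X\simeq X$, where $N=N_{\Delta_X/X\times_k X}\simeq T_X$. Since $\mathrm{char}(k)\neq 2$, the involution swapping the two factors acts by $-1$ on $N$, hence trivially on $E=\mathbb P(N)$; thus its lift $\tilde\iota$ fixes $E$ pointwise and is free elsewhere, so $r$ is a finite flat double cover of the smooth variety $X^{[2]}$, \'etale off $E$ and ramified along $E$ with $r|_E:E\xrightarrow{\sim}W:=r(E)$. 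By the standard theory of double covers in characteristic $\neq 2$, the $(-1)$-eigensheaf of $\tilde\iota$ in $r_{*}\mathcal O_{\widetilde{X\times_k X}}$ is a line bundle $L^{-1}$ on $X^{[2]}$ with $L^{\otimes 2}\simeq\mathcal O_{X^{[2]}}(W)$, and the ramification relation gives $r^{*}L\simeq\mathcal O_{\widetilde{X\times_k X}}(E)$; equivalently $\lambda:=c_1(L)\in\mathrm{CH}^1(X^{[2]})$ satisfies $r^{*}\lambda=[E]$.

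Next I would define the cycle and compute. Since $\mu=r\circ\tau^{-1}$ with $\tau$ proper birational, one has $\mu^{*}=\tau_{*}\circ r^{*}$ on cycles. Set $Z:=(-1)^{n-1}\lambda^{n}\in\mathrm{CH}^{n}(X^{[2]})$, so that $r^{*}Z=(-1)^{n-1}[E]^{n}$. Using $\mathcal O(E)|_{E}=\mathcal O_E(-1)$, i.e. $j^{*}[E]=-\zeta$ with $\zeta=c_1(\mathcal O_E(1))$, the self-intersection formula for the divisor $E$ yields $[E]^{n}=j_{*}\big((j^{*}[E])^{n-1}\big)=(-1)^{n-1}j_{*}(\zeta^{n-1})$. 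Pushing forward by $\tau$, using $\tau\circ j=i\circ g$ together with the Segre normalization $g_{*}(\zeta^{n-1})=s_0(N)=[\Delta_X]$, gives $\tau_{*}[E]^{n}=(-1)^{n-1}\,i_{*}[\Delta_X]=(-1)^{n-1}\Delta_X$. Combining, $\mu^{*}Z=\tau_{*}r^{*}Z=(-1)^{n-1}\tau_{*}[E]^{n}=(-1)^{n-1}(-1)^{n-1}\Delta_X=\Delta_X$, as required.

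The formal part of this argument — the blow-up self-intersection identity and the Segre computation $g_{*}(\zeta^{n-1})=1$ — is routine intersection theory valid in any characteristic. The substantive point, and the only place where $\mathrm{char}(k)\neq 2$ genuinely enters, is the existence of the square-root line bundle $L$ with $r^{*}\lambda=[E]$: this rests on $r$ being a flat double cover with \emph{smooth} quotient $X^{[2]}$ and on the clean eigensheaf splitting $r_{*}\mathcal O_{\widetilde{X\times_k X}}=\mathcal O_{X^{[2]}}\oplus L^{-1}$, both of which degenerate in characteristic $2$. Accordingly, the step I expect to require the most care is checking that $X^{[2]}$ is smooth and that the $(-1)$-eigensheaf is invertible, so that $L$ exists and the divisibility $[W]=2\lambda$ together with $r^{*}\lambda=[E]$ holds integrally — this is precisely what lets one divide away the factor $2$ that the naive computation $\mu^{*}\mu_{*}\Delta_X=2\Delta_X$ would otherwise produce.
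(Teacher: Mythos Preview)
Your argument is correct and is precisely the approach of Voisin's Lemma 2.3 (to which the paper simply defers): one takes $Z=(-1)^{n-1}\lambda^n$ with $\lambda$ the natural ``half'' of the exceptional divisor class on $X^{[2]}$, uses $r^*\lambda=[E]$, and then the blow-up/Segre computation $\tau_*[E]^n=(-1)^{n-1}\Delta_X$ finishes it. Your emphasis on $\mathrm{char}(k)\neq 2$ for the smoothness of $X^{[2]}$ and the eigensheaf splitting is exactly the point the paper has in mind when it says the proof from \cite{Vois_main} carries over unchanged in characteristic $>2$.
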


\begin{corollaire}\textit{(\cite[Corollary 2.4]{Vois_main}).} Any symmetric codimension $n$ cycle on $X\times_k X$ is rationally equivalent to $\mu^{*}\Gamma$ for a codimension $n$ cycle $\Gamma$ on  $X^{[2]}$.
\end{corollaire}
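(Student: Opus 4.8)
The plan is to descend the symmetric cycle over the open locus where the quotient map is \'etale, and then to absorb the resulting difference, which will be supported on the diagonal, using the preceding lemma. Write $\tau$ for the involution of $X\times_k X$ exchanging the two factors, $\beta:\widetilde{X\times_k X}\to X\times_k X$ for the blow-up along $\Delta_X$, $E$ for its exceptional divisor, and $\tilde\tau$ for the lift of $\tau$, so that $r\circ\tilde\tau=r$ and $\beta\circ\tilde\tau=\tau\circ\beta$. Set $U^{\circ}:=(X\times_k X)\setminus\Delta_X$ and $U:=X^{[2]}\setminus E'$ with $E':=r(E)$. Since $\tilde\tau$ fixes $E$ and acts freely elsewhere, the restriction $\mu^{\circ}:=\mu|_{U^{\circ}}\colon U^{\circ}\to U$ is a finite \'etale morphism of degree $2$, namely the quotient by the now free involution $\tau$; moreover on $U^{\circ}$ the operator $\mu^{*}$ coincides with the flat pullback $(\mu^{\circ})^{*}$, the exceptional corrections being supported on $\Delta_X$.

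First I would show that a symmetric codimension-$n$ cycle descends over $U^{\circ}$. Let $W\in\mathrm{CH}^n(X\times_k X)$ satisfy $\tau_{*}W=W$, and put $W^{\circ}:=W|_{U^{\circ}}\in\mathrm{CH}^n(U^{\circ})$, so that $\tau_{*}W^{\circ}=W^{\circ}$. The key point is that $W^{\circ}$ admits a genuinely $\tau$-invariant \emph{cycle} representative (and not merely up to a factor $2$): applying $\langle\tau\rangle$-cohomology to the short exact sequence of $\langle\tau\rangle$-modules $0\to\mathrm{Rat}^n(U^{\circ})\to\mathcal Z^n(U^{\circ})\to\mathrm{CH}^n(U^{\circ})\to 0$, the obstruction to lifting the invariant class $W^{\circ}$ to an invariant cycle lies in $H^1(\langle\tau\rangle,\mathrm{Rat}^n(U^{\circ}))$. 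I would check that this group vanishes, using that $k=\bar k$ has characteristic $\neq 2$: the group $\mathcal Z^n(U^{\circ})$ is a sum of induced modules (over the free $\tau$-orbits of subvarieties) and of $\mathbb Z$ with trivial action (over the $\tau$-invariant subvarieties), so $H^1(\langle\tau\rangle,\mathcal Z^n(U^{\circ}))=0$; and resolving $\mathrm{Rat}^n(U^{\circ})$ by the sheaves of rational functions on subvarieties, the relevant $H^1$'s vanish by Hilbert's Theorem $90$ and the $H^2$'s vanish because $k^{*}$ is $2$-divisible. Representing $W^{\circ}$ by a $\tau$-invariant cycle and taking the images of its components under the \'etale map $\mu^{\circ}$ then produces $\Gamma^{\circ}\in\mathrm{CH}^n(U)$ with $(\mu^{\circ})^{*}\Gamma^{\circ}=W^{\circ}$.

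Next I would globalize and correct along the diagonal. By the localization sequence the restriction $\mathrm{CH}^n(X^{[2]})\to\mathrm{CH}^n(U)$ is surjective, so I can choose $\Gamma\in\mathrm{CH}^n(X^{[2]})$ restricting to $\Gamma^{\circ}$. Then $\mu^{*}\Gamma$ and $W$ have the same restriction $W^{\circ}$ to $U^{\circ}$, so by localization the difference $W-\mu^{*}\Gamma$ is supported on $\Delta_X$; as $\Delta_X$ is irreducible of dimension $n$, this difference is an integer multiple $m\,[\Delta_X]$ in $\mathrm{CH}^n(X\times_k X)$. The preceding lemma furnishes a class $Z$ on $X^{[2]}$ with $\mu^{*}Z=\Delta_X$, whence $W=\mu^{*}(\Gamma+mZ)$, which is the desired conclusion.

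I expect the main obstacle to be the descent step over $U^{\circ}$: obtaining a cycle $\Gamma^{\circ}$ on $U$ with $(\mu^{\circ})^{*}\Gamma^{\circ}=W^{\circ}$, rather than only $2\Gamma^{\circ}$, requires the vanishing of the degree-$2$ transfer obstruction, and it is exactly here that the hypotheses $k=\bar k$ and $\mathrm{char}(k)>2$ are used. The remaining steps --- surjectivity of restriction on Chow groups and the correction along the diagonal via the preceding lemma --- are formal.
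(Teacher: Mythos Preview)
Your overall strategy --- descend over the \'etale locus $U^{\circ}$, extend to $X^{[2]}$ via the localization sequence, then absorb the discrepancy along $\Delta_X$ using the preceding lemma --- is correct and is exactly the argument behind Voisin's Corollary~2.4 (the present paper gives no separate proof, simply citing \cite{Vois_main}).

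You have, however, inserted an unnecessary and incompletely justified step. The phrase ``symmetric codimension $n$ cycle'' means an element $W\in\mathcal Z^n(X\times_k X)$ with $\tau_*W=W$ \emph{as a cycle}, not merely as a class in $\mathrm{CH}^n$; this is precisely what Lemma~\ref{coh-coh_sym} produces and what Proposition~\ref{key_point} consumes. Hence $W|_{U^{\circ}}$ is already a $\tau$-invariant cycle and your group-cohomological lifting is superfluous: one passes directly to $\Gamma^{\circ}$ by sending each $\tau$-orbit of irreducible components of $W|_{U^{\circ}}$ to its image under $\mu^{\circ}$.

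Moreover, your sketch of $H^1(\langle\tau\rangle,\mathrm{Rat}^n(U^{\circ}))=0$ has a gap. The kernel of $\bigoplus_{Y}k(Y)^*\twoheadrightarrow\mathrm{Rat}^n(U^{\circ})$ is not $\bigoplus_Y k^*$: a fixed codimension-$n$ subvariety can lie on several codimension-$(n{-}1)$ subvarieties $Y$, so the divisors $\mathrm{div}_Y(f_Y)$ can cancel across different $Y$'s and the kernel is strictly larger than the constants. Thus the $2$-divisibility of $k^*$ does not by itself control the relevant $H^2$. This gap is harmless only because the whole step can be dropped; once it is, your argument coincides with the intended one.
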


\begin{lemme}\label{coh-coh_sym}\textit{(\cite[Lemma 2.5]{Vois_main}).} Let $X$ be smooth projective $k$-variety of dimension $n$. Suppose $X$ admits a $\mathbb Z_2$-cohomological decomposition of the diagonal $$[\Delta_X - x\times_k X] = [Z]\ \text{in}\ H^{2n}(X\times_k X, \mathbb Z_2(n))$$ where $Z$ is a cycle supported on $D\times_k X$ for some proper closed subset $D$ of $X$ and $x$ $k$-rational point of $X$. Then $X$ admits a $\mathbb Z_2$-cohomological decomposition of the diagonal $$[\Delta_X - x\times_k X - X\times_k x] = [W]\ \text{in}\ H^{2n}(X\times_k X, \mathbb Z_2(n)),$$ where $W$ is a cycle supported on $D\times_k X$ and $W$ is invariant under the natural involution of $X\times_k X$.
\end{lemme}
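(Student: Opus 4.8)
The plan is to leave the term $[\Delta_X - x\times_k X]=[Z]$ untouched and to rewrite only the extra piece $[X\times_k x]$ as the class of a cycle supported on $D\times_k X$, by means of composition of correspondences; the symmetry of the resulting class will then be automatic. First I would record the action of the relevant correspondences on $\ell$-adic cohomology with $\ell=2$. For $\gamma\in H^{2n}(X\times_k X,\mathbb Z_2(n))$ acting by $\alpha\mapsto p_{2*}(p_1^{*}\alpha\cup\gamma)$, the cycle $x\times_k X=[x]\otimes 1$ induces the projector $\pi_0$ of $H^{*}(X,\mathbb Z_2)$ onto $H^0$, while $X\times_k x=1\otimes[x]$ induces the projector $\pi_{2n}$ onto $H^{2n}$, and $\pi_{2n}\circ\pi_0=0$. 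The hypothesis $[\Delta_X]=[x\times_k X]+[Z]$ then gives $[Z]_{*}=\mathrm{id}-\pi_0$ on $H^{*}(X,\mathbb Z_2)$; in particular $[Z]_{*}$ is the identity on $H^{2n}(X,\mathbb Z_2)$.

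The key step is to analyse the self-correspondence $(X\times_k x)\circ Z\in\mathrm{CH}^n(X\times_k X)$. Geometrically every component of this composite has its first coordinate in $D$, because $Z$ is supported on $D\times_k X$; hence $(X\times_k x)\circ Z$ is supported on $D\times_k X$. Conceptually its class is forced, since $((X\times_k x)\circ Z)_{*}=\pi_{2n}\circ(\mathrm{id}-\pi_0)=\pi_{2n}=(X\times_k x)_{*}$. To make this rigorous at the level of classes I would extract the $H^0(X)\otimes H^{2n}(X)$-Künneth component of $[Z]$: writing $[Z]=\sum_k a_k\otimes b_k$, the projection formula $((X\times_k x)\circ Z)=p_{13*}(p_{12}^{*}[Z]\cup p_{23}^{*}[X\times_k x])$ gives $[(X\times_k x)\circ Z]=\big(\sum_{\deg b_k=2n}(\int_X b_k)\,a_k\big)\otimes[x]=c'\cdot[X\times_k x]$, where $c'$ is the coefficient of the $H^0\otimes H^{2n}$-component of $[Z]$. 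Since $[\Delta_X]$ has $H^0\otimes H^{2n}$-component $1\otimes[x]$ while $[x\times_k X]=[x]\otimes 1$ has none, the relation $[Z]=[\Delta_X]-[x\times_k X]$ forces $c'=1$, whence $[(X\times_k x)\circ Z]=[X\times_k x]$.

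Setting $W:=Z-(X\times_k x)\circ Z$, both summands are supported on $D\times_k X$, hence so is $W$, and $[W]=[Z]-[X\times_k x]=[\Delta_X-x\times_k X-X\times_k x]$ in $H^{2n}(X\times_k X,\mathbb Z_2(n))$. The involution $\iota$ exchanging the two factors fixes $[\Delta_X]$ and swaps $[x\times_k X]$ with $[X\times_k x]$, so the right-hand class is $\iota$-invariant; therefore $[W]$ is invariant under the natural involution, as required.

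The main obstacle is the middle step: I must use that the cycle-class map into $\mathbb Z_2$-coefficient \'etale cohomology is compatible with composition of correspondences and that the K\"unneth decomposition isolates the $H^0\otimes H^{2n}$-component cleanly. Since $\ell=2\neq\mathrm{char}(k)$, the K\"unneth formula and the degree isomorphism $\int_X\colon H^{2n}(X,\mathbb Z_2(n))\xrightarrow{\sim}\mathbb Z_2$ hold, and because $H^0$ and $H^{2n}$ are torsion-free the coefficient $c'$ is unambiguous; the remaining functorialities are standard for smooth projective $k$-varieties in characteristic prime to $\ell$.
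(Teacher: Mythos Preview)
Your construction $W=Z-(X\times_k x)\circ Z$ correctly produces a cycle supported on $D\times_k X$ with $[W]=[\Delta_X-x\times_k X-X\times_k x]$; the K\"unneth computation showing $[(X\times_k x)\circ Z]=[X\times_k x]$ is fine. The gap lies in the last clause: the lemma requires the \emph{cycle} $W$ to be invariant under the involution $\iota$ of $X\times_k X$, not merely its cohomology class. This is precisely what the next proposition needs---one wants $\Delta_X-x\times_k X-X\times_k x-W$ to be a symmetric element of $\mathrm{CH}^n(X\times_k X)$ so that, by the preceding corollary, it is of the form $\mu^*\Gamma$ for some $\Gamma\in\mathrm{CH}^n(X^{[2]})$. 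Your $W$ is not $\iota$-invariant as a cycle: $\iota^*W={}^tZ-{}^tZ\circ(x\times_k X)$ is supported on $X\times_k D$, and there is no reason for $\iota^*W=W$ to hold in $\mathrm{CH}^n$.

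The repair uses the same tools you already introduced. Take instead $W:={}^tZ\circ Z$ with ${}^tZ=\iota^*Z$. Then ${}^t({}^tZ\circ Z)={}^tZ\circ Z$, so $W$ is $\iota$-invariant as a cycle; its support lies in $p_{13}\bigl((D\times_k X\times_k X)\cap(X\times_k X\times_k D)\bigr)=D\times_k D\subset D\times_k X$; and in cohomology
\[
[{}^tZ\circ Z]=[\Delta_X-X\times_k x]\circ[\Delta_X-x\times_k X]=[\Delta_X]-[x\times_k X]-[X\times_k x]+[(X\times_k x)\circ(x\times_k X)],
\]
the last term vanishing because the middle integration in its K\"unneth expansion is $\int_X 1=0$ for $n>0$. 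This is Voisin's argument, to which the paper defers without supplying its own proof.
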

The following result is proved in \cite[Proposition 2.6]{Vois_main} over $\mathbb C$.
\begin{proposition}\label{key_point} Let $X$ be a smooth odd degree complete intersection of odd dimension $n$. If $X$ admits a $\mathbb Z_2$-cohomological decomposition of the diagonal, there exists a cycle $\Gamma\in \mathrm{CH}^n(X^{[2]})$ with the following properties:
\begin{enumerate}
\item $\mu^{*}\Gamma = \Delta_X - x\times_k X - X\times_k x - W$ in $\mathrm{CH}^n(X\times_k X),$ with $W$ supported on $D\times_k X,$ for some closed proper subset $D\subset X$.
\item $[\Gamma] = 0$ in $H^{2n}(X^{[2]},\mathbb Z_2(n))$.
\end{enumerate}
\end{proposition}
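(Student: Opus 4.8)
The plan is to realize the symmetric cohomological decomposition as a cycle on $X^{[2]}$ and then to correct that cycle by an element of $\ker(\mu^{*})$ in order to kill its cohomology class. First I would apply Lemma \ref{coh-coh_sym} to replace the given $\mathbb{Z}_2$-cohomological decomposition by a symmetric one, producing a cycle $W$ supported on $D\times_k X$, invariant under the involution, with
\[
[\gamma]=0\in H^{2n}(X\times_k X,\mathbb{Z}_2(n)),\qquad \gamma:=\Delta_X-x\times_k X-X\times_k x-W .
\]
Since $\Delta_X$, $x\times_k X+X\times_k x$ and $W$ are all symmetric, $\gamma$ is a symmetric codimension $n$ cycle, so by \cite[Corollary 2.4]{Vois_main} there is $\Gamma_0\in\mathrm{CH}^n(X^{[2]})$ with $\mu^{*}\Gamma_0=\gamma$. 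This already establishes property (1) for $\Gamma_0$ (with $W$ as the term supported on $D\times_k X$), and the whole problem reduces to correcting $\Gamma_0$ by a cycle in $\ker\bigl(\mu^{*}:\mathrm{CH}^n(X^{[2]})\to\mathrm{CH}^n(X\times_k X)\bigr)$ so as to force $[\Gamma]=0$.

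For the cohomological analysis I would write $\mu^{*}=\tau_{*}\circ r^{*}$, where $\tau:\widetilde{X\times_k X}\to X\times_k X$ is the blow-down of the diagonal, with exceptional divisor $E=\mathbb{P}(N_{\Delta_X/X\times_k X})$, projection $\pi:E\to\Delta_X\simeq X$ of fibre dimension $n-1$, inclusion $j:E\hookrightarrow\widetilde{X\times_k X}$, and relative hyperplane class $h_E$. The blow-up formula identifies $\ker(\tau_{*})$ in degree $2n$ with the exceptional part $\bigoplus_{i=1}^{n-1}j_{*}\bigl(h_E^{\,i-1}\,\pi^{*}H^{2n-2i}(X,\mathbb{Z}_2)\bigr)$. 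Here the hypotheses enter: $X$ being a complete intersection, $H^{\mathrm{even}}(X,\mathbb{Z}_2)$ is generated by powers of the hyperplane class $h$, and the odd degree $d$ gives $h^n=d\,[x]$ with $d\in\mathbb{Z}_2^{\times}$, so $[x]=d^{-1}h^n$ and every $H^{2n-2i}(X,\mathbb{Z}_2)$ is spanned by a power of $h$. Hence the generators $e_i:=j_{*}(h_E^{\,i-1}\pi^{*}h^{\,n-i})$ are classes of explicit algebraic cycles supported on $E$, equivalently (after $r$) on the boundary divisor $\delta\subset X^{[2]}$. Applying $\tau_{*}$ to $\mu^{*}\Gamma_0=\gamma$ and using $\tau_{*}\tau^{*}=\mathrm{id}$ together with $\tau_{*}(\text{exceptional})=0$ shows that the $\tau^{*}$-part of $r^{*}[\Gamma_0]$ equals $[\gamma]=0$, so $r^{*}[\Gamma_0]=\sum_i a_i e_i$ is purely exceptional. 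Moreover $\pi_{*}h_E^{\,i-1}=0$ for $i\le n-1$, whence $\tau_{*}e_i=0$ already in the Chow group; the cycles on $\delta$ giving the $e_i$ therefore lie in $\ker(\mu^{*}_{\mathrm{CH}})$.

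It remains to promote $r^{*}[\Gamma_0]=\sum_i a_i e_i$ to a genuine algebraic cycle $\Gamma'$ on $X^{[2]}$ with $[\Gamma']=[\Gamma_0]$ and $\mu^{*}\Gamma'=0$ (or at worst supported on some $D'\times_k X$); then $\Gamma:=\Gamma_0-\Gamma'$ preserves property (1) and satisfies $[\Gamma]=0$. Pushing the relation forward by $r$ and using $r_{*}r^{*}=2$ yields $2[\Gamma_0]=\sum_i a_i\,r_{*}e_i$, an equality realized by an honest cycle supported on $\delta$. The main obstacle is precisely this factor of $2$: because $r$ is ramified along $E$ (so $r^{*}\delta=2E$) and because $H^{2n}(X^{[2]},\mathbb{Z}_2)$ may carry $2$-torsion, one must show that $[\Gamma_0]$ itself, and not merely $2[\Gamma_0]$, is represented by such a cycle. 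This is where the odd degree is decisive: it makes $d$ a unit in $\mathbb{Z}_2$ and, via the degree $2$ unirational parametrization of Proposition \ref{unirat} (equivalently, the decomposition of $2\Delta_X$ modulo $D'\times_k X$), it allows one to absorb the spurious factor of $2$ into a cycle supported on $D'\times_k X$. Once this $2$-adic bookkeeping is carried out, the correction $\Gamma'$ exists with the required support, and $\Gamma=\Gamma_0-\Gamma'$ satisfies both (1) and (2).
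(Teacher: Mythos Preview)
Your overall strategy---symmetrize the cohomological decomposition via Lemma \ref{coh-coh_sym}, lift the resulting symmetric cycle to $X^{[2]}$ via \cite[Corollary 2.4]{Vois_main}, and then correct by a class supported on the exceptional locus $\delta$---is exactly the route the paper (following \cite{Vois_main}) takes. Your identification of the ``factor of $2$'' as the main obstacle is also correct: from $r^{*}[\Gamma_0]=\sum_i a_i e_i$ one only gets $2[\Gamma_0]=\sum_i a_i\,r_{*}e_i$ after pushing forward, and the question is whether $[\Gamma_0]$ itself lies in the image of the cycle class map restricted to cycles supported on $\delta$.

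The gap is in how you dispose of this factor of $2$. Your appeal to Proposition \ref{unirat} does not work: that proposition is specific to cubic threefolds (it uses the tangent-line construction, which has no analogue for a general odd-degree complete intersection), and more fundamentally the decomposition of $2\Delta_X$ it provides lives in $\mathrm{CH}^n(X\times_k X)$, not in $H^{2n}(X^{[2]},\mathbb Z_2)$. Absorbing a cycle into $D'\times_k X$ on the product side does nothing to halve a cohomology class on $X^{[2]}$: if $[\Gamma_0]$ happened to be a $2$-torsion class not in the image of $j_{E_X*}$, no manipulation on $X\times_k X$ would detect or kill it. What the paper does instead is prove that $H^{*}(X^{[2]},\mathbb Z_2)$ has no $2$-torsion when $X$ is an odd-degree complete intersection. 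This is established by lifting $X$ (hence $X^{[2]}$, which is smooth and proper over the base) to characteristic $0$, invoking smooth proper base change and the comparison theorem, and then using Totaro's computation \cite{Totaro-coh} of the integral cohomology of $X^{[2]}$ over $\mathbb C$. Once torsion-freeness is known, $r^{*}$ is injective and your equation $r^{*}[\Gamma_0]=\sum a_i e_i$ can be descended: the paper does this by comparing with $(\mathbb P^{N})^{[2]}$, whose cohomology is entirely Chow-theoretic, using that for odd-dimensional complete intersections the even cohomology of $X$ is the restriction of that of $\mathbb P^N$.
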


\begin{proof}[Sketch of proof of \ref{key_point}] The proof uses an analysis of the cohomology group $H^{2n}(X^{[2]},\mathbb Z_2(n))$ and more precisely of the morphism $j_{E_X *}:H^{2n-2}(E_X,\mathbb Z_2)\rightarrow H^{2n}(X^{[2]},\mathbb Z_2(n))$, where $E_X$ is the exceptional divisor of $X^{[2]}$. This analysis is delicate for even dimensional odd degree complete intersections, but for odd dimension and odd degree complete intersections, the restriction map from the even degree cohomology of projective space to the even degree cohomology of $X$ is surjective, so the result follows from the analysis of the cohomology of $(\mathbb P^{N})^{[2]}$ which is Chow theoretic and works in any characteristic.\\
\indent The only additional fact to check is that the cohomology of $X^{[2]}$ has no $2$-torsion when $X$ is an odd degree complete intersection in projective space. To see this, choose a smooth projective lifting of $X$ to characteristic $0$ over a discrete valuation ring $\mathfrak X\rightarrow Spec(R)$. As a zero dimensional length two subscheme is local complete intersection, and has trivial degree $1$ coherent cohomology, $\mathrm{Hilb}_2(\mathfrak X/Spec(R))\rightarrow Spec(R)$ is smooth and projective by \cite[Proposition I.2.15.4]{rat-cur-kol} (and smoothness of the fibers $\mathfrak X_{\bar{\eta}}^{[2]}$ and $X^{[2]}$). So, $H^{r}(\mathfrak X_{\bar{\eta}}^{[2]},\mathbb Z_2)\simeq H^{r}(X^{[2]}, \mathbb Z_2)\ \forall r\geq 0$ by the smooth proper base change. Now by the comparison theorem, $H^{r}(\mathfrak X_{\bar{\eta}}^{[2]},\mathbb Z_2)\simeq H^{r}_B(\mathfrak X_{\bar{\eta}}^{[2]},\mathbb Z_2)$ and by \cite{Totaro-coh} these last groups have no $2$-torsion. The rest of the proof works just like in \cite{Vois_main}.
\end{proof}

For $X$ a smooth cubic hypersurface in $\mathbb P^{n+1}$, we recall another description of $X^{[2]}$ used in \cite{Vois_main}. Let $F(X)$ be the variety of lines of $X$ and $P=\{([l],x), \ x\in l,\ l\subset X\}$ be the universal $\mathbb P^1$-bundle over $F(X)$ with projections $p:P\rightarrow F(X)$ and $q:P\rightarrow X$, and let $P_2\rightarrow F(X)$ be the $\mathbb P^2$-bundle defined as the symmetric product of $P$ over $F(X).$ There is a natural embedding $P_2\stackrel{i_{P_2}}{\hookrightarrow} X^{[2]}$ which maps each fiber of $P_2\rightarrow F(X),$ that is the second symmetric product of a line in $X,$ isomorphically onto the set of subschemes of length $2$ of $X$ contained in this line. Let $p_X:P_X\rightarrow X$ be the projective bundle with fiber over $x\in X$ the set of lines in $\mathbb P^{n+1}$ passing through $x.$ Note that $P$ is naturally contained in $P_X.$

\begin{proposition}\label{sym_gal}\textit{(\cite[Proposition 2.9]{Vois_main}).} In the above situation, we have the following properties:\\
\indent (i) The natural map $\Phi: X^{[2]}\dashrightarrow P_X$ which to a unordered pair of points $x,\ y\in X$ not contained in a common line of $X$ associates the pair $([l_{x,y}],z),$ where $l_{x,y}$ is the line in $\mathbb P^{n+1}$ generated by $x$ and $y$, and $z\in X$ is the residual point of the intersection $l_{x,y}\cap X,$ is desingularized by the blow-up of $X^{[2]}$ along $P_2$.\\
\indent (ii) The induced morphism $\widetilde{\Phi}: \widetilde{X^{[2]}}\rightarrow P_X$ identifies $\widetilde{X^{[2]}}$ with the blow-up $\widetilde{P_X}$ of $P_X$ along $P$.\\
\indent (iii) The exceptional divisors of the two maps $\widetilde{X^{[2]}}\rightarrow X^{[2]}$ and $\widetilde{P_X}\rightarrow P_X$ are identified by the isomorphism $\widetilde{\Phi}':\widetilde{X^{[2]}}\cong \widetilde{P_X}$ of (ii).
\end{proposition}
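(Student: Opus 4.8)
The plan is to resolve $\Phi$ by blowing up its indeterminacy locus and then to recognise the result as the blow-up of $P_X$ along $P$. On the open set $U\subset X^{[2]}$ of length-$2$ subschemes $\xi$ whose spanning line $l_\xi$ is not contained in $X$, the map $\Phi$ is a genuine morphism: the intersection $l_\xi\cap X$ is a degree-$3$ divisor $\xi+z$ and $\Phi(\xi)=([l_\xi],z)$. Dually, $\Psi\colon ([l],z)\mapsto (l\cap X)-z$ is a morphism on the open set $V\subset P_X$ where $l\not\subset X$, and $\Phi,\Psi$ are mutually inverse there, so $\Phi$ is birational. First I would check that the indeterminacy locus of $\Phi$ is exactly $P_2$: a pair of points fails to single out a residual point precisely when it lies on a line contained in $X$, the non-reduced members of $P_2$ being the tangent vectors pointing along such a line; dually the indeterminacy locus of $\Psi$ is exactly $P$. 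Equivalently, the closure $\Gamma\subset X^{[2]}\times_k P_X$ of the graph of $\Phi$ simultaneously resolves $\Phi$ and $\Psi$, and the content of the proposition is that its two projections are the blow-ups of $X^{[2]}$ along $P_2$ and of $P_X$ along $P$.

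The geometric engine is a first-order computation along a line $l_0\subset X$. The normal bundle sequence of $l_0\subset X\subset\mathbb P^{n+1}$ reads
\[
0\to N_{l_0/X}\to N_{l_0/\mathbb P^{n+1}}\to \mathcal O_{l_0}(3)\to 0,\qquad N_{l_0/\mathbb P^{n+1}}\cong\mathcal O_{l_0}(1)^{\oplus n},
\]
the surjection sending a normal field $v$ to the directional derivative $d_vf|_{l_0}$, a cubic form on $l_0\cong\mathbb P^1$. If a line $l_t$ degenerates to $l_0$ in the normal direction $v$, the three points of $l_t\cap X$ limit to the three roots of $d_vf|_{l_0}$; this is the mechanism by which a normal direction records the limiting positions of $x,y,z$. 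At a point $\xi_0\in\mathrm{Sym}^2 l_0$ of $P_2$, a normal direction in $N_{P_2/X^{[2]}}$ then pins down the residual root $z\in l_0$ (the root not lying in $\xi_0$) and hence a well-defined value $([l_0],z)\in P$; symmetrically, at a point $([l_0],z_0)\in P$ a normal direction in $N_{P/P_X}$ pins down the residual length-$2$ divisor $\xi_0\in\mathrm{Sym}^2 l_0$. The key identifications to establish are thus
\[
\mathbb P\big(N_{P_2/X^{[2]}}\big)\ \cong\ P_2\times_{F(X)}P\ \cong\ \mathbb P\big(N_{P/P_X}\big),
\]
compatibly with the two projections of the fibre product. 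Here the numerology is consistent: $P_2$ has codimension $2$ in $X^{[2]}$, so the former projectivised bundle has $\mathbb P^1$-fibres matching the choices of $z\in l_0$, while $P$ has codimension $3$ in $P_X$, so the latter has $\mathbb P^2$-fibres matching the choices of $\xi_0\in\mathrm{Sym}^2 l_0$.

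Granting these identifications, the three assertions follow formally. The identification of $\mathbb P(N_{P_2/X^{[2]}})$ with $P_2\times_{F(X)}P$ shows that $\Phi$ extends to a morphism $\widetilde\Phi\colon\mathrm{Bl}_{P_2}X^{[2]}\to P_X$ sending the exceptional divisor $E_{P_2}$ onto $P$, which is (i). The morphism $\widetilde\Phi$ is proper and birational, an isomorphism over $P_X\setminus P$, and $\widetilde\Phi^{-1}(P)=E_{P_2}$ is a Cartier divisor; by the universal property of blowing up it factors through $\mathrm{Bl}_P P_X$, and the induced map is an isomorphism because over each $([l_0],z_0)\in P$ it matches the two descriptions of the fibre as $\mathrm{Sym}^2 l_0\cong\mathbb P^2$. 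This is (ii). Finally, under this isomorphism both exceptional divisors are identified with $P_2\times_{F(X)}P$, which is (iii).

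I expect the main obstacle to be precisely the normal-bundle computation $\mathbb P(N_{P_2/X^{[2]}})\cong P_2\times_{F(X)}P\cong\mathbb P(N_{P/P_X})$: one must show that a single blow-up along $P_2$ (respectively along $P$), and not some further modification, already resolves the map, and that the exceptional divisor is exactly the fibre product over $F(X)$. This demands controlling the first-order behaviour uniformly over $F(X)$, including along lines of the second type where $N_{l_0/X}$ jumps, and it is here that $\mathrm{char}(k)>2$ is used, both to ensure that the quotient by the involution defining $X^{[2]}$ is well behaved and that the symmetric-square and directional-derivative constructions on $l_0$ carry no characteristic pathology.
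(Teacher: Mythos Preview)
The paper does not give its own proof of this proposition: it is simply quoted with a reference to \cite[Proposition 2.9]{Vois_main} and the text moves directly on to the proof of Theorem~\ref{coh-chow}. So there is nothing to compare your argument against beyond the bare citation.

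Your outline is the natural one and is, as far as I can tell, essentially the argument Voisin gives. The identification of the indeterminacy loci, the birationality via the mutually inverse $\Phi$ and $\Psi$, the codimension count ($P_2\subset X^{[2]}$ of codimension $2$, $P\subset P_X$ of codimension $3$), and the use of the universal property of the blow-up once the exceptional divisor is a Cartier divisor mapping to $P$ are all correct. The heart of the matter is indeed the pair of identifications
\[
\mathbb P\big(N_{P_2/X^{[2]}}\big)\ \cong\ P_2\times_{F(X)}P\ \cong\ \mathbb P\big(N_{P/P_X}\big),
\]
and your description via the cubic form $d_vf|_{l_0}$ on $l_0$ is the right mechanism. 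One small caution: your sketch currently reads as if you will prove these normal-bundle identifications pointwise via the limiting-roots picture and then invoke them globally; to finish cleanly you should phrase the argument in families over $F(X)$ from the start (using the relative $\mathrm{Sym}^2$ of the universal line and the relative normal bundle sequence), so that the isomorphism of projective bundles is obtained as a morphism of schemes rather than assembled from fibres. This is also what keeps the argument uniform over lines of both types, since the map ``normal direction $\mapsto$ degree-$3$ divisor on $l_0$'' is the projectivisation of the surjection $N_{l_0/\mathbb P^{n+1}}\to\mathcal O_{l_0}(3)$ and does not depend on the splitting type of $N_{l_0/X}$.
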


\begin{proof}[Proof of Theorem \ref{coh-chow}.] Let $X\subset \mathbb P^4_k$ be a smooth cubic  threefold. By Proposition \ref{unirat}, we see that there is a nonempty open subset $U_0\subset X$, such that $(\Delta_X - X\times_k x)_{|U_0\times_k X}$ is a $2$-torsion element of $\mathrm{B}^3(U_0\times_k X)$. The subgroup of $2$-torsion elements of $\mathrm{B}^3(U_0\times_k X)$ is a $\mathbb Z/2\mathbb Z$-module and since $\mathbb Z/2\mathbb Z$ is a quotient of the localization $\mathbb Z_{(2)}$ of $\mathbb Z$ in $2\mathbb Z$, a $2$-torsion element of $\mathrm{B}^3(U_0\times_k X)$ is $0$ if and only if it is $0$ in $\mathrm{B}^3(U_0\times_k X)\otimes \mathbb Z_{(2)}$. Since $\mathbb Z_2$ is the completion of the local ring $\mathbb Z_{(2)}$ along its maximal ideal, $\mathbb Z_2$ is a faithfully flat $\mathbb Z_{(2)}$-module. Hence, a $2$-torsion element in $\mathrm{B}^3(U_0\times_k X)$ is $0$ if and only if it is $0$ in $\mathrm{B}^3(U_0\times_k X)\otimes \mathbb Z_2$. So in order to prove that $X$ admit a Chow-theoretic decomposition of the diagonal, we only need to check that it is $0$ in $\mathrm{B}^3(U'\times X)\otimes_{\mathbb Z} \mathbb Z_2$, for some open subset $U'\subset X$. Once we know that, we will have that $(\Delta_X - X\times_k x)_{|U'\times_k X}$ is $0$ in $B^3(U'\times_k X)$ i.e. a decomposition of the diagonal $$\Delta_X= X\times_k x +Z\  \mathrm{in\ B}^3(X\times_k X)$$ 
where $Z$ is supported on $D\times_k X$ for some proper closed subset $D\varsubsetneq X$. Applying Proposition \ref{alg-rat}, this will yield the Chow-theoretic decomposition of the diagonal. So we shall work with $\mathbb Z_2$ coefficients and, adapting arguments of \cite{Vois_main}, show that a cohomological decomposition of the diagonal with coefficients in $\mathbb Z_2$ implies that $(\Delta_X - X\times_k x)_{|U\times_k X}=0$ in $\mathrm{B}^3(U\times X)\otimes_{\mathbb Z} \mathbb Z_2$ for some nonempty open subset $U$ of $X$.\\
\indent Assume $X$ admits a cohomological decomposition of the diagonal with coefficients in $\mathbb Z_2.$ The assumptions of Proposition \ref{key_point} are satisfied by $X$, since the cohomology of a smooth cubic hypersurface with coefficients in $\mathbb Z_2$ has no torsion and is algebraic in even degree. Using the notation introduced previously, there exists, by Proposition \ref{key_point}, a cycle $\Gamma\in \mathrm{CH}^3(X^{[2]})$ such that
\begin{equation}
\mu^*\Gamma = \Delta_X - x\times_k X - X\times_k x - W\ \mathrm{in\ CH}^3(X\times_k X),
\label{decomp_init}
\end{equation}
 with $W$ supported on $D\times_k X,$ $D\varsubsetneq X,$ and $[\Gamma]=0$ in $H^{6}(X^{[2]}, \mathbb Z_2(3)).$\\
\indent By Proposition \ref{sym_gal}, the blow-up $\sigma:\widetilde{X^{[2]}}\rightarrow X^{[2]}$ of $X^{[2]}$ along $P_2$ identifies via $\widetilde{\Phi}$ with the blow-up $\widetilde{P_X}$ of $P_X$ along $P$. Furthermore, the exceptional divisor $E\stackrel{i_E}{\hookrightarrow}\widetilde{X^{[2]}}$ of $\widetilde{\Phi}:\widetilde{X^{[2]}}\rightarrow P_X$ is also the exceptional divisor of $\sigma:\widetilde{X^{[2]}}\rightarrow X^{[2]},$ hence maps via $\sigma$ to $P_2\subset X^{[2]}.$ Since $\widetilde{\Phi}$ is a blow-up of a smooth subvariety, the Chow groups of $\widetilde{X^{[2]}}$ decomposes as $\mathrm{CH}^*(\widetilde{X^{[2]}})=\widetilde{\Phi}^*\mathrm{CH}^*(P_X) \oplus i_{E *}\mathrm{CH}^*(E)$; we have a similar decomposition for the cohomology groups $H^*(\widetilde{X^{[2]}},\mathbb Z_2)=\widetilde{\Phi}^*H^*(P_X,\mathbb Z_2)\oplus i_{E *}H^*(E,\mathbb Z_2)$ and these decompositions are compatible with the cycle map. By work of Shen \cite[Theorem 1.1]{Shen}, the group of $1$-cycles of $X$ is generated by lines of $X$ i.e. the action of correspondence $P$ induces a surjective morphism $P_*:\mathrm{CH}_0(F(X))\rightarrow \mathrm{CH}_1(X)$.\\
\indent So let $\gamma$ be a $1$-cycle homologically trivial with coefficients in $\mathbb Z_2$ on $X$; we can write it $P_*(z)$ for a $z\in \mathrm{CH}_0(F(X))\otimes \mathbb Z_2$. The degree of $\gamma\cdot H$, where $H=c_1(\mathcal O_X(1))$, is $0$ in $\mathbb Z_2$ since it is algebraically trivial in $\mathrm{CH}_0(X)\otimes \mathbb Z_2\stackrel{deg}{\simeq}\mathbb Z_2$. But, with the above notations $\gamma\cdot H= P_*(z)\cdot H=q_*p^*(z\cdot q^*H)$ and $q^*H$ is the relative hyperplane divisor of the projective bundle $p:P\rightarrow F(X)$; so that the degree of $z$ is also $0$ in $\mathrm{CH}_0(F(X))\otimes  \mathbb Z_2$ i.e. $z$ is algebraically trivial. So $\gamma= P_*(z)$ is algebraically trivial in $\mathrm{CH}_1(X)\otimes \mathbb Z_2$. So algebraic and homological equivalences with coefficients in $\mathbb Z_2$ coincide on $\mathrm{CH}_1(X)$. Since these relations coincide also on $\mathrm{CH}_2(X)=Pic(X)$ and $\mathrm{CH}_0(X)$, they coincide on the Chow ring of $X$.\\
\indent Then, since $P_X$ is a projective bundle over $X$, the two equivalence relations coincide also on $P_X$. On the other hand, $F(X)$ being a surface, algebraic and homological equivalences coincide on $F(X)$ hence also on the projective bundle $P$ over $F(X)$. Since $E$ is also a projective bundle over $P$, the two equivalence relations coincide on the blow-up $\widetilde P_X$ of $P_X$ along $P$, which is isomorphic to $\widetilde X^{[2]}$.\\
\indent Now $\sigma^*[\Gamma]$ is $0$ in $H^6(\widetilde X^{[2]},\mathbb Z_2(3))$ since $[\Gamma]=0$ so that $\sigma^*\Gamma=0$ in $B^3(\widetilde X^{[2]})\otimes \mathbb Z_2$. We conclude that $\Gamma=0$ in $B^3(X^{[2]})\otimes \mathbb Z_2$ since $\sigma_*\sigma^*=id_{\mathrm{CH}^*(X^{[2]})}$. So (\ref{decomp_init}) yields 
$$\Delta_X = x\times_k X + X\times_k x - W'\ in\ \mathrm{B}^3(X\times_k X)\otimes \mathbb Z_2$$ 
where $W'$ is supported on $D'\times_k X$ for some $D'\varsubsetneq X$. So $(\Delta_X - X\times_k x)_{|U\times_k X}=0$ in $\mathrm{B}^3(U\times_k X)\otimes \mathbb Z_2$ (with $U=X\backslash D'$) as we wanted.

\end{proof}

\section{Cohomological decomposition of the diagonal}
We prove in this section Theorem \ref{criterion}, again adapting arguments of Voisin presented in \cite{Vois_main}. We begin by the following theorem which was proved in \cite[Theorem 3.1]{Vois_main} over $\mathbb C$.

\begin{theoreme}\label{tech_var_aux} Let $X$ be a smooth projective $k$-variety of dimension $n>0$ and $\ell\neq p$ a prime number.
\begin{enumerate}
\item Assume $H^{*}(X,\mathbb Z_{\ell})$ has no torsion, $H^{2i}(X,\mathbb Z_{\ell}(i))$ is algebraic for $2i\neq n$, $H^{2i+1}(X, \mathbb Z_{\ell})=0$ for $2i+1\neq n$ and that $X$ satisfies the following condition:\\
\indent (*) There exist finitely many smooth projective varieties $Z_i$ of dimension $n-2,$ correspondences $\Gamma_i\in \mathrm{CH}^{n-1}(Z_i\times_k X),$ and $n_i\in \mathbb Z_2,$ such that for any $\alpha,\beta\in H^{n}(X,\mathbb Z_{\ell}),$
\begin{equation}
\langle\alpha,\beta\rangle_X = \sum_i n_i\langle\Gamma_i^*\alpha,\Gamma_i^*\beta\rangle_{Z_i}.
\label{etoile}
\end{equation}
Then $X$ admits a cohomological decomposition of the diagonal with coefficients in $\mathbb Z_\ell.$
\item If $n=3$, $char(k)\neq \ell$ and $X$ admits a cohomogical decomposition of the diagonal with coefficients in $\mathbb Z_\ell,$ then (*) is satisfied.
\end{enumerate}
\end{theoreme}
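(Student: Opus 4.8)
The plan is to prove the two implications by the same underlying device: extracting everything from the Künneth decomposition of $[\Delta_X]$ (available over $\mathbb Z_\ell$ since $H^*(X,\mathbb Z_\ell)$ is torsion-free) and from the action of correspondences on $H^n(X,\mathbb Z_\ell)$. For (1), I would write $[\Delta_X]=\sum_k\delta_k$ with $\delta_k\in H^k(X,\mathbb Z_\ell)\otimes H^{2n-k}(X,\mathbb Z_\ell)$ (total twist $(n)$). The hypothesis $H^{2i+1}=0$ for $2i+1\neq n$ kills every odd component except $\delta_n$, leaving the even $\delta_{2j}$ together with the middle $\delta_n$. For each even $j$ with $2j\neq n$, both $H^{2j}$ and $H^{2(n-j)}$ are algebraic and unimodularly paired (no torsion), so choosing a basis and its dual basis realizes $\delta_{2j}$ by an explicit cycle $\sum_a T_a\times T_a^\vee$. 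The component $\delta_0$ is $[X\times x]$, while for $j\ge1$ one has $\mathrm{codim}\,T_a=j\ge1$, so this cycle is supported on a proper $D_j\times X$.

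The only transcendental piece is $\delta_n$, and here hypothesis (*) enters. Rewriting (\ref{etoile}) by adjunction as $\langle\alpha,\beta\rangle_X=\langle\alpha,(\sum_i n_i\Gamma_{i*}\Gamma_i^*)\beta\rangle_X$ and using perfectness of the cup pairing on $H^n(X,\mathbb Z_\ell)$, I get $\sum_i n_i\Gamma_{i*}\Gamma_i^*=\mathrm{id}$ on $H^n(X,\mathbb Z_\ell)$. This is the action of the self-correspondence $\Gamma:=\sum_i n_i(\Gamma_i\circ{}^{t}\Gamma_i)\in\mathrm{CH}^n(X\times_k X)\otimes\mathbb Z_\ell$, so its middle Künneth component equals $\delta_n$. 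The key geometric observation is a dimension count: since $\dim Z_i=n-2$, the cycle $\Gamma_i$ has dimension $n-1$, hence its image in the target copy of $X$ lies in a proper closed $D_i$, so $\Gamma_i\circ{}^{t}\Gamma_i$—and therefore $\Gamma$—is supported on $D\times X$ with $\mathrm{codim}\,D\ge1$. Consequently $[\Gamma]$ has first-factor degrees $\ge2$, in particular no $H^0\otimes H^{2n}$-component. Assembling, $[\Delta_X]-[X\times x]=[\Gamma]+\sum_{j\ge1}(\delta_{2j}-[\Gamma]_{2j})$, where $[\Gamma]$ is supported on $D\times X$ and every remaining summand is an even algebraic class of first-factor codimension $\ge1$, hence carried by a cycle supported on a proper closed subset times $X$. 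This is exactly a $\mathbb Z_\ell$-cohomological decomposition of the diagonal; note the argument is insensitive to the parity of $n$.

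For (2), set $n=3$ and start from $[\Delta_X]=[X\times x]+[Z]$ with $Z$ supported on $D\times X$. The same Künneth computation shows $[X\times x]$ and $[\Delta_X]$ act on $H^3(X)$ by $0$ and the identity, so $[Z]^*=\mathrm{id}$ on $H^3(X,\mathbb Z_\ell)$. Resolving $D$ gives a smooth projective surface $\tau\colon S\to X$ and a lift $\overline Z\in\mathrm{CH}^2(S\times X)$ with $(\tau\times\mathrm{id})_*\overline Z=Z$; the projection formula turns $[Z]^*=\mathrm{id}$ into $\mathrm{id}_{H^3(X)}=\tau_*\circ\overline Z^*$, exhibiting $H^3(X,\mathbb Z_\ell)$ as a direct summand of $H^1(S,\mathbb Z_\ell)$. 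To descend to the dimension $n-2=1$ demanded by (*), I would invoke the Lefschetz hyperplane theorem: for a smooth very ample curve $C\subset S$ the map $\iota^*\colon H^1(S)\to H^1(C)$ is injective, and the restricted correspondence $\Gamma:=\overline Z\cdot(C\times X)\in\mathrm{CH}^2(C\times X)$ satisfies $\Gamma^*=\iota^*\overline Z^*$, with $\langle\Gamma^*\alpha,\Gamma^*\beta\rangle_C=\int_S[C]\cup\overline Z^*\alpha\cup\overline Z^*\beta$.

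The hard part will be the final matching of the intersection form. Using $\langle\alpha,\beta\rangle_X=\int_S\overline Z^*\alpha\cup\tau^*\beta$ together with $(\mathrm{Im}\,\overline Z^*)^\perp=\ker\overline Z_*$, realizing (*) amounts to producing an algebraic class $\theta\in\mathrm{NS}(S)\otimes\mathbb Z_\ell$ with $\overline Z_*(\theta\cup\overline Z^*(-))=\mathrm{id}$ on $H^3(X)$, equivalently $\int_S\theta\cup\overline Z^*\alpha\cup\overline Z^*\beta=\langle\alpha,\beta\rangle_X$. Since $\overline Z_*\,\tau^*\tau_*\,\overline Z^*=\mathrm{id}$ already, the task is to match the operator $\tau^*\tau_*\colon H^1(S)\to H^3(S)$ by a cup product with an algebraic class, after sandwiching by $\overline Z^*$ and $\overline Z_*$; I expect this to be the crux. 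The natural tools are Hard Lefschetz (so that ample $\theta$ give operators $\cup\theta$ that are nondegenerate over $\mathbb Q_\ell$) and the fact that $\langle\cdot,\cdot\rangle_X$ is the algebraic polarization of the intermediate Jacobian $J(X)$, which the summand inclusion realizes as an isogeny factor of $\mathrm{Pic}^0(S)$: pulling this polarization back to $S$ should produce the sought $\theta$, the freedom in the scalars $n_i\in\mathbb Z_2$ and the relation that $2\Delta_X$ already decomposes absorbing the resulting denominators. Writing $\theta$ as a combination of smooth very ample curve classes then yields the data $(Z_i,\Gamma_i,n_i)$ required by (*).
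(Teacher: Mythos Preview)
Your argument for part (1) is correct and is essentially Voisin's: K\"unneth decompose $[\Delta_X]$, realize the even pieces by products of algebraic cycles, and realize the middle piece $\delta_n$ by the self-correspondence $\sum_i n_i\,\Gamma_i\circ{}^t\Gamma_i$, whose support lies over a proper closed subset of the first factor because $\dim Z_i=n-2$. This matches the paper's (and Voisin's) proof.

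Part (2), however, is incomplete, and the gap is substantive. Up to the point where you resolve $D$ to a smooth surface $\tau\colon S\to X$, lift $Z$ to $\overline Z\in\mathrm{CH}^2(S\times X)$, and deduce $\tau_*\overline Z^*=\mathrm{id}$ on $H^3(X,\mathbb Z_\ell)$, everything is fine. The problem is your reduction of (*) to the existence of an algebraic class $\theta\in\mathrm{NS}(S)\otimes\mathbb Z_\ell$ with $\overline Z_*(\theta\cup\overline Z^*(-))=\mathrm{id}$. This reformulation comes from the self-imposed restriction that the curves $Z_i$ sit inside $S$ and that $\Gamma_i=\overline Z|_{C_i}$; nothing in (*) forces this, and there is no general reason such a $\theta$ should exist integrally. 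Your proposed way out---pulling back the principal polarization of $J(X)$ and invoking that $2\Delta_X$ already decomposes---uses features specific to cubic threefolds (an intermediate Jacobian carrying the intersection form, unirationality of degree $2$) that are \emph{not} hypotheses of Theorem~\ref{tech_var_aux}. So even if this line could be made precise, it would not prove the theorem as stated.

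A further symptom that your route diverges from the intended one: the paper singles out \emph{resolution of singularities in dimension $3$} as the sole extra input in positive characteristic, whereas your argument only ever resolves the surface $D$. In Voisin's proof one resolves the three-dimensional components $W_j$ of $Z$ themselves (and then arranges the first projection $p_j\colon\widetilde W_j\to X$ to factor through a resolution of its two-dimensional image, which again requires modifying the threefold $\widetilde W_j$); this is what forces the appeal to resolution in dimension $3$. Working with the resolved $\widetilde W_j$ and the factorization of $p_j$ through a smooth surface is what lets one descend the pairing to curves with the correct integral coefficients, rather than searching for a divisor class $\theta$ on a fixed $S$. I would recommend reorganizing part (2) along these lines.
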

\begin{proof}[Sketch of proof.]The adaptation of the proof given in \cite{Vois_main}, to positive characteristic is straightforward; the only fact to use for the second point is the existence of (embedded) resolution of singularities in dimension $3$ for algebraically closed fields of positive characteristic (see \cite{cos-pil1} and \cite{cos-pil2}).
\end{proof}

\subsection{The intermediate Jacobian of a cubic threefold and Abel-morphisms}\textit{}\\
\indent Let us denote $\mathrm{CH}^2_{alg}(Y)$ the group of codimension $2$ cycles algebraically equivalent to zero on a $k$-variety $Y$. Given an abelian variety $Ab$ over $k$, following \cite[VIa]{Murre_rep}, we shall call a (group) homomorphism $f: \mathrm{CH}^2_{alg}(Y)\rightarrow Ab$ a regular morphism if for any smooth quasi-projective $k$-variety $T$ and $Z\in \mathrm{CH}^2(T\times_k Y)$ such that for any $t\in T(k)$, $Z_t\in \mathrm{CH}^2_{alg}(Y)$, the composition $T\rightarrow \mathrm{CH}^2_{alg}(Y)\stackrel{f}{\rightarrow} Ab$ is a morphism of algebraic varieties. We say that $\mathrm{CH}^2_{alg}(Y)$ admits an algebraic representative if there is an abelain variety $Ab(Y)$ over $k$ and a regular morphism $\phi:\mathrm{CH}^2_{alg}(Y)\rightarrow Ab(Y)$ which is universal in the sense that any regular morphism factor as a composition of $\phi$ followed by a momorphism of algebraic varieties. In that case we call the morphism $\phi_Z:T\rightarrow \mathrm{CH}^2_{alg}(Y)\stackrel{f_Y}{\rightarrow} Ab(Y)$ the Abel-Jacobi morphism induced by $Z$. By \cite[Theorem 1.9]{Mur_app}, $\mathrm{CH}^2_{alg}(Y)$ admits an algebraic representative when $Y$ is a smooth projective variety over an algebraically closed field.\\
\indent Now, let $X\subset \mathbb P^4_k$ be a smooth cubic hypersurface. The linear projection $\mathbb P^4_k\dashrightarrow \mathbb P^2_k$ centered along a general line $l\subset X$ (see for example \cite[Proposition 1.25]{mur_alg_mod_rat}) gives a rational map $X\dashrightarrow \mathbb P^2_k$ which after blowing up $l$, yields an ordinary conic bundle $\widetilde{X}\rightarrow \mathbb P^2_k$. By results of Beauville (\cite[Theorem 3.1 and Proposition 3.3]{Beau_prym}), the Prym variety $A$ associated to the conic bundle is the algebraic representative of $\mathrm{CH}^2_{alg}(\widetilde X)\simeq \mathrm{CH}^2_{alg}(X)$ and $\mathrm{CH}^2_{alg}(X)=A(k)$. The principally polarized abelian variety $A$ obtained by this construction is independent of the choice of a general $[l]$. So we call $J(X):=A$ the intermediate Jacobian of $X$; it is a $5$-dimensional abelian variety endowed with the principal polarization $\theta$ of $A$. By results of Beauville (\cite[Remark 2.7]{Beau_prym}), we know that there an isomorphism of $\mathbb Z_2$-modules with their intersection forms \begin{equation}t:(H^1(J(X),\mathbb Z_2),\theta)\rightarrow (H^3(\widetilde X, \mathbb Z_2), \langle,\rangle_{\widetilde X})\simeq (H^3(X, \mathbb Z_2),\langle,\rangle_X)
\label{isom_int_form}
\end{equation}
\indent It is known (see Murre \cite[section VI]{Mur_some}, see also \cite{C-G} over $\mathbb C$) that the Abel-Jacobi morphism associated to the universal $\mathbb P^1$-bundle $P\subset F(X)\times X$ over the variety of lines $F(X)$ of $X$, induces an isomorphism of abelian varieties $\phi_P:Alb(F(X))\simeq J(X)$ where $Alb(F(X))$ is the Albanese variety of $F(X)$, which is defined in this setting as the dual of the Picard variety $\mathrm{Pic}^0(F(X))$.\\
\indent Since $F(X)$ is the zero locus of a regular section of the vector bundle $\mathcal E=Sym_3(E)$ on the grassmannian $Gr(2,5)$, where $E$ is the rank $2$ quotient bundle on $Gr(2,5)$, we have the following exact sequence:
$$0\rightarrow \wedge^4\mathcal E^*\rightarrow \wedge^3\mathcal E^*\rightarrow \wedge^2\mathcal E^*\rightarrow\mathcal E^*\rightarrow \mathcal O_{G(2,5)}\rightarrow \mathcal O_{F(X)}\rightarrow 0$$ given by the Kozsul resolution of the sheaf of ideals of $F(X)$ in $Gr(2,5)$ (the exactness follows from the fact that $F(X)$ has the codimension $4=rank(\mathcal E)$) so, we have a quasi-isomorphism of complexes $\wedge^\cdot\mathcal E^*\simeq \mathcal O_{F(X)}[4]$.\\
\indent Now, we have a spectral sequence $E^{p,q}_1=H^q(Gr(2,5),\wedge^{4-p}\mathcal E^*)\Rightarrow H^{p+q-4}(F(X), \mathcal O_{F(X)})$, which, according to \cite[Theorem 5.1]{Alt-Kl}, degenerates at $E_1$, so that $H^1(F(X), \mathcal O_{F(X)})\simeq H^3(Gr(2,5),\wedge^2\mathcal E^*)$. According to \cite[Proposition 5.11 and Lemma 5.7]{Alt-Kl}, we have an isomorphism $H^3(Gr(2,5),\wedge^2\mathcal E^*)\simeq H^0(\mathbb P_k^4,T\mathbb P^4_k(-1))$. By the Euler sequence: $$0\rightarrow \mathcal O_{\mathbb P^4_k}(-1) \rightarrow H^0(\mathbb P_k^4,\mathcal O_{\mathbb P^4_k}(1))^\vee\otimes\mathcal O_{\mathbb P^4_k}\rightarrow T\mathbb P^4_k(-1)\rightarrow 0,$$ we have $H^1(F(X), \mathcal O_{F(X)})\simeq H^0(\mathbb P_k^4,\mathcal O_{\mathbb P^4_k}(1))^\vee$. Tensoring the normal bundle exact sequence of the inclusion $X\subset \mathbb P^4_k$ with $\Omega^3_{X/k}\simeq \mathcal O_{\mathbb P^4_k}(-2)_{|X}$ and looking at the associated long exact sequence, we have $H^0(X,\mathcal O_X(1))\simeq H^1(X,\Omega_{X/k}^2)$. Since $H^0(X,\mathcal O_X(1))\simeq H^0(\mathbb P_k^4,\mathcal O_{\mathbb P^4_k}(1))$, we have $H^1(F(X), \mathcal O_{F(X)})\simeq H^1(X,\Omega_{X/k}^2)^\vee$. Since by \cite[Theorem 8]{Mur_some}, the Picard and Albanese varieties of $F(X)$ are isomorphic, we have $T_0J(X)\simeq H^2(X,\Omega_{X/k})$.\\

\indent In order to apply Voisin's method in \cite{Vois_abel} to analyse the existence of a decomposition of the diagonal for a cubic threefold, we need to make sure there exists, as it is the case in characteristic $0$ (\cite{Mar-Tik} and \cite{Ili-Mar}), a parametrization of $J(X)$ with separably rationally connected generic fiber, namely condition (*) of the introduction. This parametrization will be constructed in Section $4$. So let us proceed to the proof of the criterion (Theorem \ref{criterion} of the introduction) for the existence of a cohomological decomposition of the diagonal assuming that (*) holds.

\subsection{Decomposition of the diagonal for a smooth cubic threefold}
\indent In this section, we prove the following theorem which was first proved over $\mathbb C$ in \cite[Theorem 4.9]{Vois_abel}, \cite[Theorem 4.1]{Vois_main}. Item (ii) is specific to the finite field situation and is Theorem \ref{cor_intro} of the introduction.
\begin{theoreme}\label{crit_cub} (i) Let $X\subset \mathbb P^4_k$ be a smooth cubic hypersurface ($k=\bar k$ and $char(k)>2$). Then $X$ admits a cohomological (hence Chow-theoretic by Theorem \ref{coh-chow}) decomposition of the diagonal if and only if there is a $\gamma\in \mathrm{CH}_1(J(X))\otimes_{\mathbb Z}\mathbb Z_2$ such that $\theta^4/4!=[\gamma]$ in $H^8(J(X),\mathbb Z_2(4))$.\\
\indent (ii) If $k=\overline{\mathbb F_p}$, $p>2$ and the Tate conjecture is true for divisors on surfaces over finite fields, then every smooth cubic hypersurface of $\mathbb P^4_k$ admits a Chow-theoretic decomposition of the diagonal.
\end{theoreme}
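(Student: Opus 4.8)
The plan is to run both parts through Theorem~\ref{tech_var_aux}, which for a threefold converts the existence of a $\mathbb Z_2$-cohomological decomposition of the diagonal into the cohomological condition (*) on the intersection form of $H^3(X,\mathbb Z_2)$, and then to match (*) with the algebraicity of $\theta^4/4!$ using the isometry $t$ of (\ref{isom_int_form}) and the parametrization (*) of the introduction. First I would check that $X$ meets the hypotheses of Theorem~\ref{tech_var_aux}: by the standard facts recalled above, $H^{*}(X,\mathbb Z_2)$ is torsion-free, its even part is algebraic (generated by $[X],\,[\mathcal O_X(1)],\,[l],\,[x]$) and its odd part sits in degree $3$. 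Hence, by Theorem~\ref{tech_var_aux}(1)--(2) with $n=3$ and $\ell=2$, a $\mathbb Z_2$-cohomological decomposition of the diagonal exists if and only if there are smooth projective curves $Z_i$, correspondences $\Gamma_i\in\mathrm{CH}^2(Z_i\times_k X)$ and $n_i\in\mathbb Z_2$ realising (*). The bridge to $J(X)$ is the classical identity on the $5$-dimensional principally polarized abelian variety $(J(X),\theta)$: writing $\langle\,,\,\rangle_\theta$ for the symplectic form on $H^1(J(X),\mathbb Z_2)$ attached to $\theta$, a symplectic-basis expansion of $\theta$ gives $\frac{\theta^4}{4!}\cup a\cup b=\langle a,b\rangle_\theta\cdot\frac{\theta^5}{5!}$ for $a,b\in H^1$, so that $\theta^4/4!$ is integral and
\[
\langle a,b\rangle_\theta=\Big\langle\tfrac{\theta^4}{4!}\cup a\cup b,\,[J(X)]\Big\rangle.
\]
Through $t$, the form $\langle\,,\,\rangle_\theta$ is carried to $\langle\,,\,\rangle_X$ on $H^3(X,\mathbb Z_2)$.

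\emph{From (*) to the algebraicity of $\theta^4/4!$.} Given (*), normalise each $\Gamma_i$ by subtracting $Z_i\times(\Gamma_i)_{t_0}$; this leaves $\Gamma_i^{*}$ on $H^3(X)$ unchanged and makes the fibres algebraically trivial, so that $\Gamma_i$ induces, by the universal property of the algebraic representative $J(X)$, an Abel--Jacobi morphism $h_i:Z_i\to J(X)$ with cohomological realisation $\Gamma_i^{*}=h_i^{*}\circ t^{-1}$. Putting $\gamma:=\sum_i n_i\,h_{i*}[Z_i]\in\mathrm{CH}_1(J(X))\otimes\mathbb Z_2$ and using the projection formula $\int_{Z_i}h_i^{*}u\cup h_i^{*}v=\langle h_{i*}[Z_i]\cup u\cup v,[J(X)]\rangle$, condition (*) read through $t$ (with $u=t^{-1}\alpha,\ v=t^{-1}\beta$) becomes $\langle[\gamma]\cup u\cup v,[J(X)]\rangle=\langle\tfrac{\theta^4}{4!}\cup u\cup v,[J(X)]\rangle$ for all $u,v\in H^1(J(X),\mathbb Z_2)$. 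Since $H^2(J(X),\mathbb Z_2)=\wedge^2H^1$ is spanned by the classes $u\cup v$ and the pairing $H^8\times H^2\to H^{10}\cong\mathbb Z_2$ is perfect (the cohomology being torsion-free), this forces $[\gamma]=\theta^4/4!$. This implication uses neither the parametrization nor any hypothesis on $\mathrm{char}(k)$.

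\emph{From the algebraicity of $\theta^4/4!$ to (*).} Conversely, suppose $[\gamma]=\theta^4/4!$ for some $\gamma\in\mathrm{CH}_1(J(X))\otimes\mathbb Z_2$. Because translations act trivially on $H^{*}(J(X),\mathbb Z_2)$, I may replace $\gamma$ by a general translate; as the locus $\Delta\subsetneq J(X)$ over which $\phi$ fails to have a $\mathbb P^5$ (hence separably rationally connected) fibre is a proper closed subset, for a general translate no component lies in $\Delta$. Writing this translate as $\sum_i n_i f_{i*}[Z_i]$ with $f_i:Z_i\to J(X)$ the normalisations of its components, the generic point of each $Z_i$ maps into $J(X)\setminus\Delta$, so the fibre of $\phi$ over $k(Z_i)$ is a form of $\mathbb P^5$; by Tsen's theorem over the $C_1$-field $k(Z_i)$ it has a rational point, i.e. $\phi$ admits a rational section over $Z_i$, giving a lift $g_i:Z_i\dashrightarrow B$ of $f_i$. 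Setting $\Gamma_i:=\overline{(g_i\times\mathrm{id}_X)^{*}Z}$ and using that the universal correspondence $Z$ has Abel--Jacobi realisation $Z^{*}=\phi^{*}\circ t^{-1}$, one gets $\Gamma_i^{*}=g_i^{*}\phi^{*}t^{-1}=f_i^{*}t^{-1}$ on $H^3(X,\mathbb Z_2)$ (cycles supported over finitely many points of $Z_i$ act as $0$ on $H^3(X)$, so taking closures is harmless). Reversing the computation above then yields (*). This lifting of curves through $\phi$ is the delicate point and the reason the parametrization (*) of the introduction is needed; the translation trick together with Tsen's theorem is what lets us avoid any finite base change, which could have introduced a $2$-divisible denominator and thus been fatal over $\mathbb Z_2$. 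Combining the two implications with Theorem~\ref{tech_var_aux} proves (i).

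\emph{Proof of (ii).} By (i) and Theorem~\ref{coh-chow} it suffices to exhibit $\gamma\in\mathrm{CH}_1(J(X))\otimes\mathbb Z_2$ with $[\gamma]=\theta^4/4!$ in $H^8(J(X),\mathbb Z_2(4))$. The class $\theta^4/4!$ is integral and, being a polynomial in the polarization, is defined over a finite field of definition of $J(X)$ and fixed by the Frobenius; it is therefore a $\mathbb Z_2$-Tate class for one-dimensional cycles on $J(X)$. By Schoen's theorem (\cite{Schoen}, \cite{colliot_sz}), the Tate conjecture for divisors on surfaces over finite fields implies the integral Tate conjecture for $1$-cycles over $\overline{\mathbb F_p}$, which applied to $J(X)$ provides the required algebraic cycle $\gamma$. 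Hence $X$ admits a $\mathbb Z_2$-cohomological, and so a Chow-theoretic, decomposition of the diagonal.
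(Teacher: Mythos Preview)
Your proof is correct and follows essentially the same route as the paper's: both directions hinge on the isometry $t$ of (\ref{isom_int_form}), the compatibility $\Gamma_i^{*}=h_i^{*}\circ t^{-1}$ (the paper's ``easy equality'' Lemma), Tsen's theorem applied over the normalised curves to lift them through the $\mathbb P^5$-fibred parametrization, and Schoen's theorem for part~(ii). Two cosmetic differences: you route the implication ``algebraicity $\Rightarrow$ decomposition'' through Theorem~\ref{tech_var_aux}(1) by first verifying condition~(*), whereas the paper does a direct K\"unneth reduction to constructing a cycle $Z=\sum_i m_i\,Z_i\circ{}^{t}Z_i$ acting as the identity on $H^3$ (which is the same computation in disguise); and you move the curves $C_i$ into the good locus by translation on $J(X)$, while the paper invokes Chow's moving lemma---your translation trick is in fact the cleaner of the two. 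One small point of care: to apply Tsen you implicitly need a proper model of $\phi$ over each $\widetilde{C_i}$ so that the generic fibre is genuinely a Severi--Brauer variety; the paper makes this explicit by compactifying $B$ and normalising the pullback, and your ``form of $\mathbb P^5$'' claim tacitly relies on the same step.
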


\begin{proof} Assume $\theta^4/4!\in \mathrm{CH}_1(J(X))\otimes \mathbb Z_2.$ We will prove that $X$ admits a cohomological (with coefficient in $\mathbb Z_2$) decompositon of the diagonal. We know that $H^*(X,\mathbb Z_2)$ has no torsion so applying K\"unneth decomposition, we can write $[\Delta_X]= \sum_{i=0}^6 \delta_{i,6-i}$ where $\delta_{i,6-i}\in H^i(X,\mathbb Z_2)\otimes H^{6-i}(X,\mathbb Z_2)$ are the components of $[\Delta_X]\in H^6(X\times_k X,\mathbb Z_2).$ Since $H^{1}(X,\mathbb Z_2)=0=H^{5}(X,\mathbb Z_2)$ we have $\delta_{1,5}=0=\delta_{5,1}.$ We know that $\delta_{6,0}$ is the class $X\times_k x$ for any point $x\in X(k)$ and $\delta_{0,6}$ is the class of the subvariety $x\times_k X$ which obviously does not dominate $X$ by the first projection. Since $H^2(X,\mathbb Z_2)$ and $H^4(X,\mathbb Z_2)$ are algebraic, $\delta_{2,4}$ and $\delta_{4,2}$ are linear combinations of classes of algebraic subvarieties of $X\times_k X$ that do not dominate $X$ by the first projection. The existence of a cohomological decomposition of the diagonal with coefficients in $\mathbb Z_2$ is thus equivalent to the existence a cycle $Z\subset X\times_k X$ suth that the support of $Z$ is contained in $D\times_k X,$ with $D\subset X$ a proper subscheme, and $Z^*:H^3(X,\mathbb Z_2)\rightarrow H^3(X,\mathbb Z_2)$ is the identity map since in this case $\delta_{3,3}=[Z]$. We proceed as in \cite[Theorem 4.9]{Vois_abel} to construct such a $Z$.\\
\indent Let $C = \sum_i m_i C_i$ be a $1$-cycle of $J(X)$ of class $\theta^4/4!$ in $H^8(J(X),\mathbb Z_2)$, where $C_i\subset J(X)$ are curves and $m_i\in \mathbb Z_2$. According to condition (*), which is Theorem \ref{p_param}, there are a smooth $10$-dimensional quasi-projective $k$-variety $B$ and a cycle $\mathcal Z\subset B\times_k X,$ flat over $B,$ such that the induced Abel-Jacobi morphism $\phi_{\mathcal Z}:B\rightarrow J(X)$ is dominant with general fiber $\mathbb P^5_k$. Let $B'$ be the closure of the graph of $\phi_{\mathcal Z}$ in $\overline{B}\times J(X)$, where $\overline{B}$ is a compactification of $B$; $B'$ is birational to the quasi-projective variety $B$ and the projection yields a proper, surjective morphism $\phi:B'\rightarrow J(X)$ with general fiber $\mathbb P^5_k$. Let $W\subset J(X)$ be an open subscheme contained in the image of $\phi_{\mathcal Z}$ such that $\forall x\in W(k),\ \phi^{-1}(x)\simeq \mathbb P^5_k.$
Using Chow's moving lemma, we can assume that the generic point of each $C_i$ is in $W.$ Let $n_i:\widetilde{C_i}\rightarrow C_i$ be the normalization of $C_i.$ Let $N_i:B_i\rightarrow B'\times_{C_i}\widetilde{C_i}$ be the normalization morphism. A component of $B_i$ is proper and flat (because integral over a smooth curve) over the smooth curve $\widetilde{C_i}$ with general fiber $\mathbb P^5_k$ so by Tsen's Theorem (\cite{Tsen}), $B_i\rightarrow \widetilde{C_i}$ admits a section $\sigma_i$ and $\phi\circ n_i'\circ N_i\circ\sigma_i = n_i$ (where $n_i': B'\times_{C_i}\widetilde{C_i}\rightarrow B'$ is the projection). Let $Z_i\subset \widetilde{C_i}\times_k X$ be the cycle $(pr_B^{B\times_k J(X)}\circ n_i'\circ N_i\circ\sigma_i,id_X)^*\mathcal Z$. We have the easy equality:
\begin{lemme} The homomorphisms $Z_{i,*}\ and\ t\circ \phi_{Z_i, *}: H^1(\widetilde{C_i},\mathbb Z_2)\rightarrow H^3(X,\mathbb Z_2)$ coincide (where $t:(H^1(J(X),\mathbb Z_2),\theta)\rightarrow (H^3(X,\mathbb Z_2),\langle,\rangle_X)$ is the isomorphism).
\end{lemme}
Let $Z\subset X\times_k X$ be the cycle $\sum_im_iZ_i\circ\,^t Z_i.$ We have $$(Z_i\circ\,^t Z_i)^* = Z_{i,*}\circ Z_i^* = (t^{-1})^*\circ \phi_{Z_i,*}\circ \phi_{Z_i}^*\circ t^{-1}$$ where $(t^{-1})^*$ is the Poincar\'e dual of $t^{-1}.$ But $\phi_{Z_i}$ is just $n_i$ so $\phi_{Z_i,*}\circ \phi_{Z_i}^*$ is $n_{i,*}\circ n_i^*$ which is just $[C_i]\cup:H^1(J(X),\mathbb Z_2)\rightarrow H^9(J(X),\mathbb Z_2).$ Hence $Z^*$ is the composite map $$H^3(X,\mathbb Z_2)\stackrel{t^{-1}}{\simeq} H^1(J(X),\mathbb Z_2)\stackrel{(\theta^4/4!)\cup = \sum_i[C_i]\cup}{\longrightarrow} H^9(J(X),\mathbb Z_2)\stackrel{(t^{-1})^*}{\simeq} H^3(X,\mathbb Z_2).$$ So $Z^*$ is the identity on $H^3(X,\mathbb Z_2)$. On the other hand, $Z$ does not dominate $X$ by the first projection since it is supported on $\bigcup_i \Sigma_i\times \Sigma_i$, where $\Sigma_i$ is the image in $X$ of $\mathrm{Supp}(Z_i)$.\\

\indent We prove now the second direction. Assume $X$ admits a Chow-theoretic decomposition of the diagonal. Then $X$ admits a cohomological decomposition of the diagonal with coefficients in $\mathbb Z_2$ so by Theorem \ref{tech_var_aux} we have finitely many smooth projective curves $Z_i$ and for each curve, a correspondence $\Gamma_i\in \mathrm{CH}^2(Z_i\times_k X)$ and a $n_i\in \mathbb Z_2$ satisfying (\ref{etoile}). The Abel-Jacobi map $\Phi_X$ of $X$ induces (after choosing a reference point in $Z_i$) a morphism $$\gamma_i=\Phi_X\circ \Gamma_{i*}:Z_i\rightarrow J(X)$$ with image $Z_i':= \gamma_{i*}Z_i \in \mathrm{CH}_1(J(X))$. Now we have $\wedge^2H^1(J(X),\mathbb Z_2)\simeq H^2(J(X),\mathbb Z_2)= H^8(J(X),\mathbb Z_2)^*$ and an isomorphism $(H^1(J(X),\mathbb Z_2),\theta)\stackrel{t}{\simeq}(H^3(X,\mathbb Z_2),\langle,\rangle_X)$ with their intersection form (\ref{isom_int_form}). For all $\alpha\in H^1(J(X),\mathbb Z_2),\ \gamma_i^*\alpha = \Gamma_i^*t(\alpha)$ so for all $\alpha,\ \beta\in H^1(J(X),\mathbb Z_2),$
$$\begin{tabular}{ll}
$\langle\sum_i n_i [Z_i'],\alpha\cup\beta\rangle_{J(X)}$ &$=\sum_i n_i \langle[Z_i']\cup\alpha,\beta\rangle_{J(X)}$\\
&$=\sum_i n_i \langle\gamma_i^*\alpha,\gamma_i^*\beta\rangle_{Z_i}$\\
                                            &$=\sum_i n_i \langle\Gamma_i^*t(\alpha),\Gamma_i^*t(\beta)\rangle_{Z_i}$\\
                                            &$=\langle t(\alpha),t(\beta)\rangle_X \mathrm{\ (by\ \ref{etoile})}$\\
                                            &$=\theta(\alpha,\beta)$\\
                                            &$=\langle\frac{\theta^4}{4!},\alpha\cup\beta\rangle_{J(X)}$\\
\end{tabular}$$
hence $\frac{\theta^4}{4!}=\sum_i n_i[Z'_i]$.\\
\indent (ii) If the Tate conjecture is true for divisors on surfaces defined over finite fields, then the theorem of Schoen (\cite{Schoen}) says that the cycle map $\mathrm{CH}_1(J(X))\otimes \mathbb Z_2\rightarrow \bigcup_U H^8(J(X),\mathbb Z_2(4))^U$, where $U$ runs through all open subgroups of $Gal(k/k_{def})$ ($k_{def}$ being a finite field over which $J(X)$ is defined), is surjective. Since $\theta^4$ is algebraic, $\theta^4/4!\in \bigcup_U H^8(J(X),\mathbb Z_2(4))^U$ and we conclude by point (i) of the theorem.
\end{proof}

\section{Parametrization of the intermediate Jacobian of a smooth cubic threefold} The goal of this section is to prove that condition (*) still hold in the positive characteristic setting. Over $\mathbb C$, such a parametrization was achieved using the space of smooth normal elliptic quintics which we do not know to exist a priori in our setting. So we will construct some stable normal elliptic quintics using the lines on the cubic threefold.
\subsection{Some facts on the Fano variety of lines.}\label{reminder_on_fano} Let $X$ be a smooth cubic hypersurface $\mathbb P^4_k$. Since $\mathbb P^3_k$ is separably rationally connected and there is a dominant degree $2$ rational map $\mathbb P^3_k\dashrightarrow X$ and $2\neq char(k)$, $X$ is separably rationally connected.\\
\indent The Fano variety of lines $F(X)=\{[l]\in Gr(2,5),\ l\subset X\}$ of $X$ is a smooth projective surface. Denote by $P\stackrel{p}{\rightarrow} F(X)$ the universal $\mathbb P^1$-bundle and by $q:P\rightarrow X$ the projection on $X.$\\
\indent We collect some results from Murre (\cite{mur_alg_mod_rat}) on the geometry of $F(X)$. Let $\mathcal F_0\subset F(X)$ be the subset defined by $$\mathcal F_0=\{[l]\in F(X),\ \exists K\subset \mathbb P^4_k\ \mathrm{a}\ 2\mathrm{-plane\ s.t.}\  K\cap X = 2l + l'\ \mathrm{as\ divisors\ in}\ K\}.$$ Then $\mathcal F_0$ is a non-singular curve (\cite[Corollary 1.9]{mur_alg_mod_rat}), the lines $[l]\in \mathcal F_0$ are said to be of the second type and those not in $\mathcal F_0$ are said to be of the first type. The subscheme $$\mathcal F'_0=\{[l]\in F(X),\ \exists K\subset \mathbb P^4_k\ \mathrm{a}\ 2\mathrm{-plane\  and}\ l'\in F(X)\ \mathrm{s.t.}\ K\cap X= l + 2l'\}$$ has dimension at most $1$ (\cite[Lemma 1.11]{mur_alg_mod_rat}). For $[l]\in F(X),$ let us denote by $$\mathcal H (l) =\overline{\{[l']\in F(X),\ [l]\neq[l'],\ l\cap l' \neq \emptyset\}}.$$ Then $\mathcal H(l)$ is a curve in $F(X)$.
We have the following properties:

\begin{proposition}\label{at_most_6}\textit{(\cite[(1.17), (1.18), (1.24), (1.25)]{mur_alg_mod_rat})} (i) If $[l]$ is a line of the first type on $X$ and $x\in l$ then there are only finitely many (in fact at most $6$) lines on $X$ through $x.$ Moreover there is no $2$-plane tangent to $X$ in all points of $l.$\\
\indent (ii) There is an nonempty open subscheme $\mathcal U'\subset F(X)$ contained in $F(X)\backslash (\mathcal F_0\cup \mathcal F'_0)$ such that any $[l]\in \mathcal U'$ ($l$ is of the first type) is contained in a smooth hyperplane section of $X$ and $\mathcal H(l)$ is a smooth irreducible curve of genus $11$.
\end{proposition}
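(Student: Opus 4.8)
The plan is to reduce part (i) to the elementary geometry of the lines of $X$ through a fixed point, and part (ii) to the conic bundle obtained by projecting $X$ from $l$, so that the genus will simply drop out of Riemann--Hurwitz for an \'etale double cover of the discriminant quintic. For part (i) I fix $x\in l$ and coordinates with $x=[1:0:0:0:0]$, writing the defining cubic as $f=f_3+x_0f_2+x_0^2f_1$ with $f_i$ homogeneous of degree $i$ in $x_1,\dots,x_4$ (the constant term vanishes as $x\in X$, and $f_1$ is, up to scale, $df_x$). A line through $x$ of direction $v$ lies on $X$ iff $f_1(v)=f_2(v)=f_3(v)=0$; restricting to the hyperplane $\{f_1=0\}=\mathbb P(T_xX)\cong\mathbb P^2$ exhibits the lines of $X$ through $x$ as the intersection of the conic $\{\bar f_2=0\}$ and the cubic $\{\bar f_3=0\}$. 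By B\'ezout this intersection, when finite, has at most $2\cdot3=6$ points, which yields the bound. Finally I note that a $2$-plane $K\supset l$ is tangent to $X$ at every point of $l$ precisely when $K\subset T_pX$ for all $p\in l$, equivalently when $K\cap X$ is non-reduced along $l$, i.e. $K\cap X=2l+l'$; the existence of such a $K$ is exactly the defining condition for $[l]$ to be of the second type, so a first-type line admits no such plane.

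It remains to establish finiteness. The intersection $\{\bar f_2=0\}\cap\{\bar f_3=0\}$ is infinite only if $\bar f_2\equiv0$ or $\bar f_2,\bar f_3$ share a component. A shared line, or an irreducible conic $\bar f_2$ dividing $\bar f_3$, would make the corresponding lines of $X$ through $x$ sweep out a $2$-plane, respectively a quadric cone, inside $X$; in the cone case the span is $T_xX$, and $T_xX\cap X$ is then a cubic surface containing a quadric, hence reducible with a plane component. Since a smooth cubic threefold contains no $2$-plane, both cases are excluded, so finiteness fails at $x$ exactly when $x$ is an Eckardt point ($\bar f_2\equiv0$, i.e. $f_1\mid f_2$, giving a one-parameter cone of lines over the plane cubic $\bar f_3$). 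The remaining content of (i) is that no point of a first-type line is an Eckardt point, equivalently that every line through an Eckardt point is of the second type; I would obtain this from the normal bundle dichotomy $N_{l/X}\cong\mathcal O\oplus\mathcal O$ versus $\mathcal O(1)\oplus\mathcal O(-1)$, reading off the type from the intersection multiplicity of $\bar f_2\cap\bar f_3$ at the direction $v_l$ of $l$.

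For part (ii), the loci $\mathcal F_0,\mathcal F_0'$ are of dimension $\le1$ in the surface $F(X)$, so a general $[l]$ is of the first type and avoids them. That such an $l$ lies on a smooth hyperplane section is a Bertini argument: the hyperplanes $H\supset l$ form a $\mathbb P^2$, and $X\cap H$ is singular at $p\in l$ iff $H=T_pX$, so the only obstructions come from the one-parameter family $\{T_pX:p\in l\}$, a curve in that $\mathbb P^2$, which a general $H$ avoids while Bertini handles singularities off $l$. For the genus I project $X$ from $l$ and, after blowing up $l$, obtain the conic bundle $\widetilde X\to\mathbb P^2$: a point $t$ of the base is a plane $\Pi_t\supset l$, and $\Pi_t\cap X=l+C_t$ with $C_t$ the residual conic; by part (i), $l$ being of the first type forces $C_t\not\supset l$, so this is an honest conic bundle, with discriminant a plane quintic $\Delta$. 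Every line $l'\neq l$ meeting $l$ spans with $l$ a plane $\Pi_t$ and is a component of the degenerate conic $C_t$, and conversely each component of a degenerate $C_t$ is such a line; this identifies $\mathcal H(l)$ with the double cover $\widetilde\Delta\to\Delta$ parametrizing the two lines of each degenerate fibre.

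For a general first-type $l$, $\Delta$ is a smooth plane quintic, hence of genus $\binom{4}{2}=6$, and the net of conics meets the codimension-$3$ locus of rank-$1$ conics nowhere, so every degenerate fibre is a pair of distinct lines and $\widetilde\Delta\to\Delta$ is \'etale; it is moreover connected, being precisely the nontrivial double cover whose Prym variety is $J(X)$. Riemann--Hurwitz for an \'etale connected double cover then gives $g(\mathcal H(l))=g(\widetilde\Delta)=2\cdot6-1=11$, with $\widetilde\Delta$ smooth and irreducible. Since all the imposed conditions (first type, avoidance of $\mathcal F_0\cup\mathcal F_0'$, smoothness of the hyperplane section and of $\Delta$, \'etaleness and connectedness of the cover) are open, they cut out the required nonempty open $\mathcal U'$. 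The main obstacle I anticipate is the genericity input of this last step: proving that for general $l$ the quintic $\Delta$ is smooth and the associated double cover is \'etale (no double-line fibre) and connected; the exclusion in part (i) of Eckardt points from first-type lines is the other point that seems to require the normal-bundle computation rather than pure incidence geometry.
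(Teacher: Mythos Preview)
The paper does not supply its own proof of this proposition: it is stated with a bare citation to Murre \cite[(1.17), (1.18), (1.24), (1.25)]{mur_alg_mod_rat}, and the surrounding text only extracts from it the consequence that $K_l^*\not\subset\mathcal D(X)$ for $[l]\in\mathcal U'$. Your outline is therefore not competing with an argument in the paper but is essentially a reconstruction of Murre's classical proof (B\'ezout in $\mathbb P(T_xX)$ for (i); the conic bundle $\widetilde X\to\mathbb P^2$ and the \'etale double cover $\widetilde\Delta\to\Delta$ of the discriminant quintic for (ii)), and it is sound.

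Two remarks on the points you flag. For the Eckardt-point step in (i), rather than routing through the normal-bundle splitting you can argue directly: if $x\in l$ is Eckardt then $T_xX\cap X$ is the cone with vertex $x$ over the plane cubic $\{\bar f_3=0\}$, and the tangent plane $K$ to this cone along the ruling $l$ satisfies $K\cap X\supset 2l$, so $[l]\in\mathcal F_0$. For (ii), your observation that $[l]\notin\mathcal F_0'$ is exactly the condition ``no residual conic is a double line'' already yields both smoothness of $\Delta$ (since $\widetilde X$, being the blow-up of the smooth $X$ along the smooth $l$, is smooth) and \'etaleness of $\widetilde\Delta\to\Delta$; connectedness then follows, as you say, from $\dim J(X)=5\neq 0$ via Beauville's identification of the Prym. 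The only place to be careful in characteristic $p$ is the Bertini step for smooth hyperplane sections through $l$: the paper (following Murre) sidesteps this by using the Gauss map to show $\mathcal D(X)\subset(\mathbb P^4)^*$ is a hypersurface and then exhibiting one smooth hyperplane section containing a line, which makes the condition $K_l^*\not\subset\mathcal D(X)$ visibly open and nonempty on $F(X)$.
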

By the jacobian criterion, given a hyperplane $H\subset \mathbb P^4_k$, $X\cap H$ is not a smooth cubic surface if and only if there is a $x\in X$ such that $H$ is tangent to $X$ at $x$. Looking at the Gauss map $\mathcal D: X\rightarrow (\mathbb P^4_k)^*,$ (\cite{C-G}) we see that the variety $\mathcal D(X)\subset (\mathbb P^4_k)^*$ parametrizing those hyperplanes tangent to $X$ at some point $x\in X$ is a hypersurface in $(\mathbb P^4_k)^*.$ So the general (parametrized by the open subscheme $(\mathbb P^4_k)^*\backslash \mathcal D(X)$) hyperplane section of $X$ is smooth. Moreover, if $[l]\in \mathcal U',$ we know that $K^*_{l}=\{ [H]\in (\mathbb P^4_k)^*,\ l\subset H\},$ which is a $2$-plane in $(\mathbb P^4_k)^*,$ is not contained in $\mathcal D(X),$ hence $\mathcal D(X)\cap K^*_l$ is of dimension at most $1$ and $K^*_l\backslash (\mathcal D(X)\cap K^*_l)$ is a $2$-dimensional open subscheme. So we see that in fact, for $[l]\in \mathcal U',$ the general hyperplane containing $l$ gives a smooth hyperplane section of $X.$ We have also the following properties:

\begin{proposition}\label{csq_no_plane} (i) For $[l]\in \mathcal U'$, $Im(\mathcal H(l))\ (= \cup_{[l']\in \mathcal H(l)} l'=q(p^{-1}(\mathcal H(l))))$ is not contained in a fixed $2$-plane. So if $[l']\in F(X)$ is a line distinct from $[l]$ such that $l\cap l'\neq \emptyset,$ $\mathcal H(l)$ and $\mathcal H(l')$ have no common component.\\
\indent (ii) Let $h_{l_0}:F(X)\backslash (\mathcal H(l_0)\cup\{[l_0]\})\rightarrow K^*_{l_0}$ be the morphism defined by $[l]\mapsto [span(l,l_0)]$ for $[l_0]\in \mathcal U'.$ Then $h_{l_0}$ is dominant and there is an open subcheme $V_{l_0}\subset \mathcal U'\backslash (\mathcal H(l_0)\cup\{[l_0]\})$ such that for $[l]\in V_{l_0},\ h_{l_0}([l])$ gives a smooth hyperplane section.\\
\indent (iii) For $[l]\in \mathcal U'$, there is an open subscheme $\mathcal U'_{[l]}\subset \mathcal U'$ such that for all $[l']\in \mathcal U'_{[l]},$ $\mathcal H(l')\cap \mathcal H(l)$ is finite. So there is an open subscheme $\mathcal U\subset \mathcal U'$ such that $\forall [l]\in \mathcal U,$ $\mathcal H(l)\cap \mathcal U'$ is a non empty open subscheme of $\mathcal H(l)$.
\end{proposition}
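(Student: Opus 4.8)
The plan is to establish the three parts in order, each reducing to elementary projective geometry of the lines on $X$ combined with the structural facts recalled above, chiefly Proposition \ref{at_most_6} and the description of the Gauss hypersurface $\mathcal D(X)$.

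For (i), I would use the single observation that a $2$-plane $K\subset \mathbb P^4_k$ meets $X$ in a plane cubic curve, which contains at most three lines. If $Im(\mathcal H(l))$ were contained in a fixed $2$-plane $K$, every line $l'$ with $[l']\in \mathcal H(l)$ would lie in $K\cap X$, so $\mathcal H(l)$ would be finite, contradicting that it is a curve. For the consequence, suppose $[l']\neq [l]$ satisfies $l\cap l'\neq\emptyset$ and that $\mathcal H(l),\mathcal H(l')$ share a component. Since $[l]\in\mathcal U'$ the curve $\mathcal H(l)$ is irreducible (Proposition \ref{at_most_6}(ii)), so the shared component is all of $\mathcal H(l)$ and $\mathcal H(l)\subseteq\mathcal H(l')$. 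Put $p=l\cap l'$ and $K_0=span(l,l')$. A general $[l'']\in\mathcal H(l)$ meets $l$ at a point other than $p$, since only finitely many lines of $X$ pass through $p$ by Proposition \ref{at_most_6}(i), and it meets $l'$ (as $\mathcal H(l)\subseteq\mathcal H(l')$), again away from $p$ for the same reason. Hence $l''$ meets the two coplanar lines $l,l'$ in two distinct points, forcing $l''\subset K_0$; thus $Im(\mathcal H(l))\subset K_0$, contradicting the first assertion.

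For (ii), I would use that, for $[l_0]\in\mathcal U'$, the general member $[H]$ of $K^*_{l_0}\cong\mathbb P^2$ cuts out a smooth cubic surface $H\cap X$, which is exactly the content of the paragraph preceding the proposition ($K^*_{l_0}\not\subset\mathcal D(X)$). Such a surface contains $27$ lines, of which $l_0$ is one and $16$ are disjoint from $l_0$; for any line $l$ disjoint from $l_0$ one has $span(l,l_0)=H$, so $[l]$ lies in the domain $F(X)\setminus(\mathcal H(l_0)\cup\{[l_0]\})$ of $h_{l_0}$ and $h_{l_0}([l])=[H]$. Thus the general fibre of $h_{l_0}$ is nonempty (in fact finite), so $h_{l_0}$ is dominant; taking the preimage of the dense open $K^*_{l_0}\setminus\mathcal D(X)$ and intersecting it with $\mathcal U'$ produces the open $V_{l_0}$ over which the associated hyperplane section is smooth.

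For (iii), the finiteness statement is a dimension count. If $\mathcal H(l)\cap\mathcal H(l')$ were infinite for $[l']$ ranging over a dense subset $S$ of $\mathcal U'$, then, both curves being irreducible for points of $\mathcal U'$, one would have $\mathcal H(l')=\mathcal H(l)$ for such $[l']$; a general $[l'']\in\mathcal H(l)$ would then lie in every $\mathcal H(l')=\mathcal H(l)$, so $l''$ would meet every $l'$ with $[l']\in S$, forcing the $2$-dimensional family $S$ into the curve $\mathcal H(l'')$, which is absurd. Hence the locus $\mathcal U'_{[l]}$ of $[l']$ with finite intersection is a dense open. For the last assertion, $F(X)\setminus\mathcal U'$ is a proper closed subset with finitely many irreducible components, and $\mathcal H(l)\cap\mathcal U'$ fails to be a nonempty open of the irreducible curve $\mathcal H(l)$ only when $\mathcal H(l)$ coincides with one of these components $D_j$; if the set of such $[l]$ were dense in $\mathcal U'$, pigeonhole would fix a $D_j$ with $\mathcal H(l)=D_j$ for a dense family of $[l]$, and intersecting this family with $\mathcal U'_{[l_*]}$ for one such $[l_*]$ would yield $[l']\neq[l_*]$ with $\mathcal H(l')=D_j=\mathcal H(l_*)$ yet $\mathcal H(l')\cap\mathcal H(l_*)$ finite, a contradiction. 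Removing the closure of this bad locus from $\mathcal U'$ gives the desired open $\mathcal U$. The hard part will be the bookkeeping in (iii): tracking the finitely many components of $F(X)\setminus\mathcal U'$ and making the incidence and dimension estimates precise, in particular justifying the behaviour of a general line of $\mathcal H(l)$, whereas (i) and (ii) rest directly on the $27$-lines count and on the fact that a plane cubic carries at most three lines.
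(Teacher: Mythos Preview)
Your proposal is correct, and parts (i) (the consequence) and (ii) follow the paper's line essentially verbatim. Two points of genuine divergence are worth noting.

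For the first assertion of (i), your argument is cleaner than the paper's: you observe directly that a $2$-plane meets $X$ in a plane cubic, which contains at most three lines, so $\mathcal H(l)$ could not be a curve. The paper instead analyses the image of the ruled surface $p^{-1}(\mathcal H(l))$ under $q$, argues it must collapse onto $l$ itself, and then invokes Proposition~\ref{at_most_6}(i) to get a contradiction from the finiteness of fibres of $q$ over points of a first-type line. Your route is shorter and uses nothing beyond the degree of $X$.

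For (iii), the paper takes a more constructive path: it sets $\mathcal U'_{[l_0]}=V_{l_0}$ from part (ii), because for $[l]\in V_{l_0}$ the lines $l,l_0$ are skew and span a smooth cubic surface $S$; any common member of $\mathcal H(l)$ and $\mathcal H(l_0)$ would lie in $S$, so $\mathcal H(l)\neq\mathcal H(l_0)$ (both being infinite irreducible curves) and the intersection is finite. For the last sentence the paper simply picks, for each $1$-dimensional component $D_j$ of $F(X)\setminus\mathcal U'$ that happens to equal some $\mathcal H(l_j)$ with $[l_j]\in\mathcal U'$, one such $[l_j]$, and sets $\mathcal U=\bigcap_j V_{l_j}$. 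Your incidence/dimension argument reaches the same conclusion but less explicitly; note that to conclude $\mathcal U'_{[l]}$ is \emph{open} (not merely dense) you should observe, as you implicitly do, that the bad locus $\{[l']\in\mathcal U':\mathcal H(l')=\mathcal H(l)\}$ is contained in the curve $\mathcal H(l'')$ for any fixed $[l'']\in\mathcal H(l)$, so its closure is at most $1$-dimensional. The paper's approach has the advantage of reusing (ii) and giving $\mathcal U$ explicitly; yours is more self-contained.
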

\begin{proof} (i) Suppose there exists $K\subset \mathbb P^4_k$ a $2$-plane such that $q(p^{-1}(\mathcal H(l)))\subset K$. $p^{-1}(\mathcal H(l))$ is a smooth irreducible ruled surface over a curve of genus $11$. Since $X$ is smooth, it cannot contain a $2$-plane, so $q(p^{-1}(\mathcal H(l))$ is contrated into a curve or a set of points in $K\cap X$. Since the ruled surface is irreducible, $q(p^{-1}(\mathcal H(l)))$ is an irreducible closed ($q$ proper) subscheme of $K$ and we have $l\subset q(p^{-1}(\mathcal H(l)))$ so $l=q(p^{-1}(\mathcal H(l))).$ But, by \ref{at_most_6} (i), since $l$ is of the first type, the fiber of $q$ over a point of $l$ is $0$-dimensional. So such $K$ does not exist.\\
\indent Moreover, suppose $[l']\in F(X)\backslash \{[l]\}$ is such that $l'\cap l\neq \emptyset$ and $\mathcal H(l)$ (which is irreducible) is one of the components of $\mathcal H(l'),$ then all (except maybe the other $4$ ones passing through $l\cap l'$) lines that meet $l$ are contained in the $2$-plane $span(l,l'),$ which is impossible.\\
\indent (ii) Since $[l_0]\in \mathcal U',$ the general member of $K^*_{l_0}$ gives a smooth hyperplane section which contains some lines that do not meet $l_0$ (i.e in $F(X)\backslash (\mathcal H(l_0)\cup\{[l_0]\})$), $h_{l_0}$ is dominant and the general fiber is $0$-dimensional of cardinal $<27.$ So $h_{l_0\ |\mathcal U'\backslash \mathcal H(l_0)}$ is still dominant. Let $\mathcal C = K^*_{l_0}\cap \mathcal D(X)$ be the closed subscheme of dimension $\leq 1$ parametrizing singular hyperplane sections containing $l_0.$ Since $h_{l_0}$ is dominant, the closed subscheme $h^{-1}(\mathcal C)$ has dimension $\leq 1.$ So the property is proved with $V_{l_0}=\mathcal U'\backslash (\mathcal H(l_0)\cup h^{-1}(\mathcal C)).$\\
\indent (iii) For $[l_0]\in \mathcal U',$ according to the previous point, there is an open subscheme $V_{l_0}\subset \mathcal U'\backslash (\mathcal H(l_0)\cup\{[l_0]\})$ such that $\forall [l]\in V_{l_0},\ h_{l_0}([l])$ is a smooth hyperplane section, in particular, $\forall [l]\in V_{l_0},\ \mathcal H(l_0)\neq \mathcal H(l)$ (otherwise, all these lines would be contained in the smooth cubic surface and $k$ being algebraically closed, $\mathcal H(l_0)(k)$ is infinite). For the last statement, should they exist, take finitely many $[l_j]\in \mathcal U'$ such that the $\mathcal H(l_j)$ are the irreducible components of the divisors ($\mathcal F_0, \mathcal F'_0...$) removed from $F(X)$ to obtain $\mathcal U'$ and set $\mathcal U = \cap_{j}V_{l_j}$ (and $\mathcal U'=\mathcal U$ if there is no such $l_j$).
\end{proof}

\subsection{Space of normal elliptic quintics}\label{constr_curve} We will construct in this section, singular normal elliptic quintic curves, namely cycles of rational curves (generically) with four components, $3$ lines and a conic. We will use for this the properties of the Fano surface from \ref{at_most_6} and \ref{csq_no_plane}.\\

\indent Take $[l_0]\in \mathcal U'.$ Let $\mathcal C\subset K^*_{l_0}$ the closed subscheme of dimension $\leq 1$ parametrizing the singular hyperplane sections containing $l_0$. Its preimage $h^{-1}_{l_0}(\mathcal C)$ has dimension at most $1$ since there is at least one (hence a $2$-dimensional open set of) smooth hyperplane section containing $l_0$. Let us denote by $(C_i)_{1\leq i\leq m}$ the irreducible components of $h_{l_0}^{-1}(\mathcal C)$ and should they exist such, let us denote by $([l^i])_{1\leq i\leq m}\in \mathcal U^m$  some lines such that $\mathcal H(l^i)\cap C_i$ has dimension $1$. We can choose $[l_1]$ in the ($2$-dimensional) open subscheme $\cap_{i=1}^m\mathcal U'_{[l^i]}\cap V_{l_0}\cap \mathcal U$. Then $\mathcal H(l_1)\cap h_{l_0}^{-1}(\mathcal C) = \cup_{i=1}^m\mathcal H(l_1)\cap C_i\subset \cup_{i=1}^m(\mathcal H(l_1)\cap \mathcal H(l^i)),$ the last set is finite as finite union of finite sets ($[l_1]\in \cap_{i=1}^m\mathcal U'_{[l^i]}$). Since the hyperplane section $S=H_1\cap X$ is smooth (where $H_1=span(l_0,l_1)$) and the lines meeting $l_0$ and $l_1$ are in $S$, $\mathcal H(l_1)\cap \mathcal H(l_0)$ is finite.\\

Since $(\mathcal H(l_1)\cap \mathcal U')\backslash (h_{l_0}^{-1}(\mathcal C)\cup \mathcal H(l_0))$ is an nonempty open subscheme of $\mathcal H(l_1)$ and $S$ contains $27$ lines, we can choose $[l_2]$ in this open subset such that $[l_2]$ is not contained in $S$ (in particular not contained in $H_1$) and $H_2\cap X$ is a smooth cubic surface, where $H_2=span(l_0, l_2)$ and $H_2\neq H_1$ ($l_2$ is not in $S$). We have $\mathcal H(l_2)\neq \mathcal H(l_0)$ (because $[l_1]\in \mathcal H(l_2)$ and is not in $\mathcal H(l_0)$) and $\mathcal H(l_2)\neq \mathcal H(l_1)$ by Proposition \ref{csq_no_plane} (i) so that $\mathcal H(l_2)\backslash (\mathcal H(l_0)\cup \mathcal H(l_1))$ is an nonempty open subscheme of $\mathcal H(l_2)$. Since $H_2\cap X$ contains only finitely many lines, we can take $[l_3]\in \mathcal H(l_2)\backslash (\mathcal H(l_0)\cup \mathcal H(l_1))$ not in that surface and not intersecting $l_2$ at $l_1\cap l_2$ (there are at most $4$ other lines passing through $l_1\cap l_2$ by Proposition \ref{at_most_6} (i)).\\
\indent Letting $H_3=span(l_0,l_3)$, we have $H_1\cap H_2\not\subset H_3$ otherwise the point $l_2\cap l_1$ would be in $H_1\cap H_2 \subset H_3$ and the same with the point $l_2\cap l_3$ so that ($l_2\cap l_1 \neq l_2\cap l_3$) we would have $l_2\subset H_3,$ i.e. $H_3=H_2$ but $l_3\not\subset H_2.$ So $H_1\cap H_2\cap H_3$ is the line $l_0.$
\begin{lemme}\label{no_double_line} A hyperplane section of a smooth hypersurface of degree $\geq 2$ in projective space $\mathbb P^n$ has a zero-dimensional singular set. Hence a hyperplane section of a smooth cubic surface does not contain a double line.
\end{lemme}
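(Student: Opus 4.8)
The plan is to prove Lemma \ref{no_double_line} in two independent steps, treating the general hypersurface statement first and then deducing the cubic surface consequence. Let $Y\subset \mathbb P^n_k$ be a smooth hypersurface of degree $d\geq 2$ defined by a homogeneous polynomial $f$, and let $H=\{\ell=0\}$ be a hyperplane. I would set $Z=Y\cap H$ and work on the chart $H\cong \mathbb P^{n-1}_k$, where $Z$ is the zero locus of $\bar f:=f|_H$. The key point is that smoothness of $Y$ controls how degenerate $Z$ can be: a point $x\in Z$ is singular in $Z$ precisely when $T_xY\supset H$, i.e. when $H$ is tangent to $Y$ at $x$. Hence $\mathrm{Sing}(Z)$ is contained in the contact locus $\{x\in Y : T_xY=H\}$, which is the fibre over $[H]$ of the Gauss map $\mathcal D\colon Y\to(\mathbb P^n_k)^*$ already used in the paper.

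First I would show this singular locus is zero-dimensional. The cleanest argument is to observe that the restriction of the differential $df$ to $H$ computes the singularities of $Z$: since $Y$ is smooth, the gradient $(\partial_0 f,\dots,\partial_n f)$ has no common zero on $Y$, and a point $x\in Z$ is singular in $Z$ iff all partials of $\bar f$ vanish, which by the Euler relation forces the full gradient of $f$ at $x$ to be proportional to the coefficient vector of $\ell$. If $\mathrm{Sing}(Z)$ had positive dimension, it would contain a curve $\Gamma\subset Y$ along which $T_xY$ is constant equal to $H$; this is impossible for $d\geq 2$ because the Gauss map of a smooth hypersurface of degree $\geq 2$ in characteristic $\neq$ a problematic prime is finite (indeed generically the contact locus is finite), so no positive-dimensional contact set exists. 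The cleanest self-contained route, avoiding any characteristic subtlety in the Gauss map, is to reduce to the affine local statement: in suitable affine coordinates where $x$ is the origin and $H=\{x_n=0\}$, smoothness of $Y$ means $\bar f$ together with the normal derivative generate the maximal ideal, and a short computation with the Jacobian shows $\mathrm{Sing}(Z)$ is cut out by the $2\times 2$ condition forcing at least codimension one inside $Z$, whence $\dim\mathrm{Sing}(Z)=0$.

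For the cubic surface consequence I specialize to $n=3$, $d=3$, so $Z=Y\cap H$ is a plane cubic curve in $H\cong\mathbb P^2_k$. A double line $2l$ would be contained in $Z$, and every point of $l$ would then be a singular point of $Z$ (the local equation near such a point looks like $u^2$ up to higher order, so all first partials vanish along $l$). This produces a one-dimensional singular locus, contradicting the first part. Hence $Z$ contains no double line.

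The main obstacle I anticipate is isolating the genuinely characteristic-free core of the argument: in positive characteristic the Gauss map can be inseparable and its fibres can behave unexpectedly, so I would avoid invoking finiteness of the Gauss map as a black box and instead argue directly via the local Jacobian computation above, which uses only that $Y$ is smooth and $d\geq 2$. This keeps the proof valid over any $k=\bar k$ with $\mathrm{char}(k)>2$, as required. Everything else—identifying $\mathrm{Sing}(Z)$ with the tangency locus and translating a double line into a one-dimensional singular set—is elementary and purely formal.
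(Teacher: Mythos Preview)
Your overall architecture matches the paper's: identify $\mathrm{Sing}(Z)$ with the fibre of the Gauss map $\mathcal D$ over $[H]$, argue that this fibre is finite, then observe that a double line in a plane cubic would force a one-dimensional singular locus. The paper's proof is a single sentence doing exactly this.

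The divergence is in how you handle finiteness of the Gauss fibre, and here your worry about characteristic leads you off course. The paper simply notes that $\mathcal D$ is given by the partials $\partial_0 f,\dots,\partial_n f$, which are sections of $\mathcal O_X(d-1)$; smoothness of $X$ makes this linear system base-point-free on $X$, and $d\geq 2$ makes $\mathcal O_X(d-1)$ ample. A morphism to projective space whose pullback of $\mathcal O(1)$ is ample has zero-dimensional fibres, and this is pure intersection theory valid in every characteristic. The pathologies of Gauss maps in positive characteristic (inseparability, unexpected degree, strange images) never affect finiteness, so there is no ``problematic prime'' to avoid and no need for a workaround.

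Your proposed replacement, the local Jacobian computation, has a gap as stated: you conclude that $\mathrm{Sing}(Z)$ has ``at least codimension one inside $Z$'', but $\dim Z=n-2$, so this only yields $\dim\mathrm{Sing}(Z)\leq n-3$, not $0$. For $n=3$ (the cubic-surface case actually used later) this happens to suffice, but the lemma is asserted for arbitrary $n$. The ``$2\times 2$ condition'' you allude to is also not spelled out precisely enough to see what bound it would really produce. The ampleness argument is both shorter and gives the full statement; you should adopt it.
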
 
Indeed, the singular locus of a hyperplane section $H\cap X$ of $X$ is the fiber over the point $[H]\in (\mathbb P^{n})^*$ of the Gauss map of $X$ which is given by a base point free ample linear system.\\ 

 Applying the lemma, we see that the residual conic $D$ to $l_0$ in $H_3\cap S$ is the union of two (secant) distinct lines or a nondegenerate conic. We have $D\cap l_2= \emptyset$ otherwise, a point $x\in D\cap l_2$ would be in $H_1\cap H_2\cap H_3$ which is known to be $l_0$ and $l_0\cap l_2=\emptyset.$ As the intersection of the $2$-plane $H_1\cap H_3$ with the line $l_1$ in the $3$-dimensional projective space $H_1,\ (H_1\cap H_3)\cap l_1$ is a complete intersection point of $(H_1\cap H_3)\cap X$ not on $l_0$ (since $l_0\cap l_1=\emptyset$) so $D\cap l_1$ is that point. In particular if $D$ is degenerate, it is not the meeting point of the two components of $D$. The same is true for $D \cap l_3$ which the complete intersection point $(H_1\cap H_3)\cap l_3,$ intersection of the $2$-plane $H_1\cap H_3$ with the line $l_3$ in the $3$-dimensional projective space $H_3.$ So $C = l_1\cup l_2\cup l_3\cup D$ is a locally complete intersection closed subset of $X$ of pure dimension $1$. It is a curve of arithmetic genus $1$, with trivial dualizing sheaf, not contained in any hyperplane and whose intersection with a general hyperplane is a degree $5$ effective zero cycle. It is thus a singular linearly normal elliptic quintic curve. From this construction, we see that we have at least a $6$-dimensional family of such curves: $[l_0],[l_1]$ are chosen in open subsets of $F(X),\ [l_2]$ is chosen in an open subset of the curve $\mathcal H(l_1)$ and $[l_3]$ in an open subset of the curve $\mathcal H(l_2)$.\\
\indent The curve $C$ thus constructed have the following properties:

\begin{proposition}\label{p_coh_curve_1} (i) $h^i(X,\mathcal I_C)=0$ $i\in \{0, 1, 3\}$ and $h^2(X,\mathcal I_C)=1$;\\
\indent (ii) $h^i(X,\mathcal I_C(1))=0$ for all $0\leq i\leq 2$;\\
\indent (iii) $h^0(X,\mathcal I_C(2))=5$, $h^i(X,\mathcal I_C(2))=0$ for $i=1,2$;\\
\indent (iv) $dim_kExt^1(\mathcal I_C,\mathcal O_X(-2))=1$ futhermore the generator of $Ext^1(\mathcal I_C,\mathcal O_X(-2))$ generates $\mathcal Ext^1(\mathcal I_C,\mathcal O_X(-2))$ at any point of $C$.
\end{proposition}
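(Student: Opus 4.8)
The plan is to deduce everything from the two standard exact sequences attached to $C\subset X$ together with the numerical data of $C$. Since $C$ is a connected, reduced, local complete intersection curve of arithmetic genus $1$ with trivial dualizing sheaf $\omega_C\simeq \mathcal O_C$ and $\deg C=5$, Riemann--Roch on $C$ gives $\chi(\mathcal O_C(m))=5m$; moreover $h^0(\mathcal O_C)=1=h^1(\mathcal O_C)$ by connectedness and genus $1$, while for $m\geq 1$ Serre duality on the Gorenstein curve $C$ yields $h^1(\mathcal O_C(m))=h^0(\mathcal O_C(-m))=0$ (a line bundle of negative degree on every component of a connected curve has no nonzero section), whence $h^0(\mathcal O_C(m))=5m$. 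On the side of $X$, the reminder facts give $h^1(\mathcal O_X(k))=h^2(\mathcal O_X(k))=0$ for all $k$, $h^0(\mathcal O_X(m))=1,5,15$ for $m=0,1,2$, and $h^3(\mathcal O_X(m))=h^0(\mathcal O_X(-2-m))=0$ via $\omega_X\simeq\mathcal O_X(-2)$.

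For (i) and (ii) I would run the long exact cohomology sequence of $0\to\mathcal I_C(m)\to\mathcal O_X(m)\to\mathcal O_C(m)\to 0$ for $m=0,1$. When $m=0$ the restriction $H^0(\mathcal O_X)\to H^0(\mathcal O_C)$ is an isomorphism of one-dimensional spaces, forcing $h^0(\mathcal I_C)=h^1(\mathcal I_C)=0$; then $H^2(\mathcal I_C)\simeq H^1(\mathcal O_C)=k$ and, since $H^2(\mathcal O_C)=0$, the group $H^3(\mathcal I_C)$ injects into $H^3(\mathcal O_X)=0$. When $m=1$ the map $H^0(\mathcal O_X(1))\to H^0(\mathcal O_C(1))$ goes between two five-dimensional spaces and is injective because $C$ is nondegenerate (no linear form vanishes on it), hence an isomorphism; combined with $h^1(\mathcal O_X(1))=h^2(\mathcal O_X(1))=0$ and $h^1(\mathcal O_C(1))=0$ this gives $h^i(\mathcal I_C(1))=0$ for $i\leq 2$.

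Part (iii) is where the real work lies. For $m=2$ the vanishings $h^2(\mathcal I_C(2))=h^3(\mathcal I_C(2))=0$ are automatic (they inject into $H^2,H^3$ of $\mathcal O_X(2)$ after $H^1(\mathcal O_C(2))=H^2(\mathcal O_C(2))=0$), and $\chi$ forces $h^0-h^1=15-10=5$; so the whole issue is $h^1(\mathcal I_C(2))=0$, i.e. surjectivity of $H^0(\mathcal O_X(2))\to H^0(\mathcal O_C(2))$. I would cut by a general hyperplane $H$: for general $H$ the section $S=X\cap H$ is a smooth cubic surface meeting $C$ transversally in a reduced set $\Gamma$ of $5$ points away from the nodes of $C$, and tensoring $0\to\mathcal O_X(-1)\to\mathcal O_X\to\mathcal O_S\to 0$ with the torsion-free sheaf $\mathcal I_C(2)$ gives $0\to\mathcal I_C(1)\to\mathcal I_C(2)\to\mathcal I_{\Gamma/S}(2)\to 0$ (no $\mathcal{T}\!or$ correction, by transversality). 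Since $h^1(\mathcal I_C(1))=0$ by (ii), $H^1(\mathcal I_C(2))$ injects into $H^1(S,\mathcal I_{\Gamma/S}(2))$, and as no quadric contains the cubic $S$ we have $H^0(S,\mathcal O_S(2))\simeq H^0(\mathbb P^3,\mathcal O(2))$, so this last group measures exactly the failure of $\Gamma$ to impose independent conditions on quadrics of $\mathbb P^3=H$. The hard part is therefore to verify that the $5$ points of $\Gamma$ impose independent conditions; I would establish this by showing that for general $H$ they are in linearly general position in $\mathbb P^3$ --- the two points cut on the conic $D$ lie on the line $H\cap\langle D\rangle$, while the three points cut on $l_1,l_2,l_3$ avoid this line and the relevant planes for general $H$, each coincidence being a proper closed condition on $H$ in view of the incidence pattern of the components of $C$ and of its nondegeneracy --- and then invoking the classical fact that $d\leq 2n+1$ points in linearly general position in $\mathbb P^n$ impose independent conditions on quadrics (here $d=5\leq 7=2\cdot 3+1$). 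I expect this general-position statement, in positive characteristic and for a reducible curve, to be the only genuine obstacle.

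Finally, for (iv) I would use that $C$ is lci of pure codimension $2$ in the smooth threefold $X$, so that $\mathcal Ext^q(\mathcal O_C,\mathcal O_X)=0$ for $q\neq 2$ and $\mathcal Ext^2(\mathcal O_C,\mathcal O_X)\otimes\omega_X\simeq\omega_C$. Applying $\mathcal Hom(-,\mathcal O_X(-2))$ to $0\to\mathcal I_C\to\mathcal O_X\to\mathcal O_C\to 0$ then yields $\mathcal Ext^1(\mathcal I_C,\mathcal O_X(-2))\simeq\mathcal Ext^2(\mathcal O_C,\mathcal O_X(-2))\simeq\omega_C\otimes\omega_X^{-1}\otimes\mathcal O_X(-2)\simeq\omega_C\simeq\mathcal O_C$, using $\omega_X\simeq\mathcal O_X(-2)$ and the triviality of $\omega_C$. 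The local-to-global spectral sequence, together with $H^1(\mathcal O_X(-2))=0$ and $H^2(\mathcal O_X(-2))=0$, gives an isomorphism $\mathrm{Ext}^1(\mathcal I_C,\mathcal O_X(-2))\simeq H^0(C,\mathcal Ext^1(\mathcal I_C,\mathcal O_X(-2)))=H^0(C,\mathcal O_C)=k$, so the group is one-dimensional; and under this isomorphism a generator corresponds to a trivializing (nowhere vanishing) global section of the invertible $\mathcal O_C$-module $\mathcal Ext^1(\mathcal I_C,\mathcal O_X(-2))\simeq\mathcal O_C$, hence generates the sheaf at every point of $C$. Apart from the general-position input in (iii), the remaining arguments are bookkeeping with the two standard exact sequences and the duality $\mathcal Ext^2(\mathcal O_C,\omega_X)\simeq\omega_C$.
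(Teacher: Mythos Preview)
Your argument is correct and, for parts (i), (ii) and (iv), essentially identical to the paper's. For (iii) you also follow the paper's route (cut by a general hyperplane and reduce to showing that the five points $\Gamma=C\cap H$ impose independent conditions on quadrics of $H$), but there are two minor differences worth noting. First, the paper computes $h^0(\mathcal I_C(2))=5$ directly by choosing coordinates so that the components of $C$ are the lines $[A_0A_1],[A_1A_2],[A_2A_3]$ and a conic in the plane $X_1=X_2=0$, and then writing down five explicit quadrics; you instead deduce $h^0=5$ from the Euler characteristic once $h^1=0$ is known, which is cleaner. Second, for the ``genuine obstacle'' you flag, the paper does not need linearly general position: it only uses that the five points span $H$, which you already have from your diagram (the column $0\to\mathcal I_C\to\mathcal I_C(1)\to\mathcal I_{\Gamma/S}(1)\to 0$ together with $h^0(\mathcal I_C(1))=0$ and $h^1(\mathcal I_C)=0$ forces $h^0(\mathcal I_{\Gamma/S}(1))=0$). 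Given that $\Gamma$ spans $H$, the paper shows surjectivity of $H^0(\mathcal O_S(2))\to\bigoplus k_{P_i}$ by a direct construction: for any $P_{i_0}$, pick three of the remaining points spanning a plane $h_1$ not through $P_{i_0}$ and a plane $h_2$ through the last point also missing $P_{i_0}$; then $h_1\cup h_2$ is a quadric not vanishing at $P_{i_0}$. This sidesteps the general-position verification you were worried about.
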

\begin{proof} (i) It follows immediately from the long exact sequence associated to the short exact sequence 
\begin{equation} 0\rightarrow \mathcal I_C\rightarrow \mathcal O_X\rightarrow \mathcal O_C\rightarrow 0 
\label{def_cur}
\end{equation}
the connectedness of $C$, which imposes $h^0(C,\mathcal O_C)=1$, Serre duality on $C$ and the isomorphism $\omega_C\simeq \mathcal O_C$.\\
\indent (ii) By the normalization exact sequence we have the inclusion $H^0(C,\mathcal O_C(l))\hookrightarrow \oplus_i H^0(C_i,\mathcal O_{C_i}(l))$ for any $l\in \mathbb Z$, where $C_i$ are the irreducible components of $C$ which are either a projective line or a non-singular conic lying in a plane. So for $l=-1$, we have $h^0(\mathcal O_C(-1))=0$. Now, applying Riemann-Roch theorem to $C$ (see \cite{Mod_cur} p.83) yields $h^0(C,\mathcal O_C(1))=deg(\mathcal O_C(1))=5$ since the intersection of $C$ with a generic hyperplane has degree $5$. Moreover since $C$ is not contained in a hyperplane, $h^0(\mathcal I_C(1))=0$. So tensoring (\ref{def_cur}) by $\mathcal O_X(1)$ and taking the long exact sequence gives the desired cancelations.\\
\indent (iii) By a projective transformation, we can suppose that $C$ is the curve whose components are: the lines $[A_0A_1],\ [A_1A_2],\ [A_2A_3]$ and the conic given by an equation of the form $Q=\alpha X_4^2 + \beta X_0X_3 +\gamma X_0X_4 + \delta X_3X_4$ (so that it meets $A_0$ and $A_3$) in the plane $X_1=0=X_2$ where $A_0=[1:0\dots:0],\dots,A_3=[0:\dots:1:0]$. Then we can check that the quadrics given by union of hyperplanes $Q_0=X_0X_2,\ Q_1=X_1X_3,\ Q_2=X_1X_4,\ Q_3=X_2X_4$ and the quadric $Q$ form a basis of the space of quadrics containing $C$; so we have $h^0(\mathcal I_C(2))=5$. The cancelation of $h^2(X,\mathcal I_C(2))$ follows immediately from the long exact sequence associated to (\ref{def_cur}) tensorized by $\mathcal O_X(2)$. For $h^1(\mathcal I_C(2))$, we proceed as in \cite[Proposition IV.1.2]{Hulek}: for a generic hyperplane $H$, letting $\Gamma:=C\cap H=\{P_1,\dots P_5\}$ where the $P_i$ are five points whose span is equal to $H$ and $S:=H\cap X$ a smooth cubic surface, we have the following diagram with exact rows and columns:
$$\xymatrix{ &0\ar[d] &0\ar[d] &0\ar[d] & \\
0\ar[r] &\mathcal I_C(1)\ar[r]\ar[d]_{H} &\mathcal O_X(1)\ar[r]\ar[d]_{H} &\mathcal O_C(1)\ar[r]\ar[d]_{H} &0\\
0\ar[r] &\mathcal I_C(2)\ar[r]\ar[d] &\mathcal O_X(2)\ar[r]\ar[d] &\mathcal O_C(2)\ar[r]\ar[d] &0\\
0\ar[r] &\mathcal I_\Gamma(2)\ar[r]\ar[d] &\mathcal O_S(2)\ar[r]\ar[d] &\oplus_{i=1}^5 k_{P_i}\ar[r]\ar[d] &0\\
 &0 &0 &0 &}$$
For any $P_{i_0}$, since $\Gamma$ spans $H$, there is a plane for the form $h_1=span(P_{j_1},P_{j_2},P_{j_3})$ ($j_i\in\{1,\dots5\}\backslash \{i_0\}$) that do not contains $P_{i_0}$. we can also choose a plane $h_2$ containing the last point $P_k$ ($\{k\}= \{1,\dots5\}\backslash\{i_0,j_1,j_2,j_3\}$) and not containing $P_{i_0}$, then the union $h_1\cup h_2$ is a quadric of $H$ that does not contain $P_{i_0}$. Thus $H^0(S,\mathcal O_S(2))\rightarrow \oplus_{i=1}^5 k_{P_i}$ is surjective, so $h^1(S,\mathcal I_\Gamma(2))=0$. This gives the surjectivity of $H^1(X,\mathcal I_C(1))\rightarrow H^1(X,\mathcal I_C(2))$ hence $h^1(\mathcal I_C(2))=0$.\\
\indent (iv) The first terms of the local-to-global spectral sequence used to compute the groups $Ext(\mathcal I_C, \mathcal O_X(-2))$ gives
\begin{equation}
\begin{split}
0\rightarrow H^1(X,\mathcal O_X(-2))\rightarrow Ext^1(\mathcal I_C, \mathcal O_X(-2))\rightarrow H^0(X, \mathcal Ext^1(\mathcal I_C,\mathcal O_X(-2)))\\
\rightarrow H^2(X,\mathcal Hom(\mathcal I_C,\mathcal O_X(-2))).
\end{split}
\label{for_morph}
\end{equation}
By \cite[lemma 1]{schnell}, $\mathcal Ext^1(\mathcal I_C,\mathcal O_X(-2))\simeq det(\mathcal{N}_{C/X})\otimes i^*\underbrace{\mathcal O_X(-2)}_{=\omega_X}$, where the vector bundle $\mathcal{N}_{C/X}$ on $C$ is the normal bundle of the locally complete intersection subcheme $C$ in $X$ and $i:C\hookrightarrow X$ the inclusion so $\mathcal Ext^1(\mathcal I_C,\mathcal O_X(-2)) \simeq i_*\omega_C$ the dualizing sheaf of $C,$ hence $H^0(X,\mathcal Ext^1(\mathcal I_C,\mathcal O_X(-2)))\simeq H^0(C,\omega_C).$ But $\omega_C\simeq \mathcal O_C$ and $C$ is proper and connected so $H^0(C,\omega_C)\simeq k.$ Next we have $H^1(X,\mathcal O_X(-2))=0$ and $\mathcal Hom(\mathcal I_C,\mathcal O_X(-2))\simeq \mathcal O_X(-2)$ (\cite[lemma1]{schnell}) so that $H^2(X,\mathcal Hom(\mathcal I_C,\mathcal O_X(-2)))\simeq H^2(X,\mathcal O_X(-2))$. This last group being zero, we have $Ext^1(\mathcal I_C,\mathcal O_X(-2))\simeq H^0(C,\mathcal O_C) \simeq k.$ It is thus clear that the generator of $Ext^1(\mathcal I_C,\mathcal O_X(-2))$ generates $\mathcal Ext^1(\mathcal I_C,\mathcal O_X(-2))$ at any point of $C$.
\end{proof}

By the Serre construction in codimension $2$ (see for example \cite{schnell}), we can associate, using Proposition \ref{p_coh_curve_1} (iv), to a locally complete intersection curve $C\subset X$ constructed as above, a rank $2$ vector bundle on $X$.

\begin{proposition}\label{serre} For any curve $C\subset X$ constructed as above, there is a unique auto-dual rank $2$ vector bundle $\mathcal E$ on $X$ fitting in the exact sequence: 
\begin{equation}0\rightarrow \mathcal O_X\rightarrow \mathcal E(1)\rightarrow \mathcal I_C(2)\rightarrow 0
\label{extension}
\end{equation} given by any non zero element of $Ext^1(\mathcal I_C,\mathcal O_X(-2))\simeq k$.
\end{proposition}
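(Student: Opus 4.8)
The plan is to apply the codimension $2$ Serre construction of \cite{schnell} to the locally complete intersection curve $C\subset X$, feeding it the $Ext$-computation of Proposition \ref{p_coh_curve_1}(iv). First I would fix a nonzero element $e\in Ext^1(\mathcal I_C,\mathcal O_X(-2))\simeq k$. Via the twisting isomorphism $Ext^1(\mathcal I_C,\mathcal O_X(-2))\simeq Ext^1(\mathcal I_C(2),\mathcal O_X)$, the class $e$ defines an extension of $\mathcal O_X$-modules
$$0\rightarrow \mathcal O_X\rightarrow \mathcal F\rightarrow \mathcal I_C(2)\rightarrow 0,$$
and I would set $\mathcal E:=\mathcal F(-1)$, so that untwisting by $\mathcal O_X(1)$ recovers exactly the sequence (\ref{extension}). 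Everything then reduces to three checks: that $\mathcal F$ is locally free, that $\mathcal E$ is auto-dual, and that the construction is independent of $e$.

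The step that genuinely requires Proposition \ref{p_coh_curve_1}(iv) — and would otherwise be the main obstacle — is the local freeness of $\mathcal F$ along $C$. Away from $C$ one has $\mathcal I_C=\mathcal O_X$, so there $\mathcal F$ is an extension of the line bundle $\mathcal O_X(2)$ by $\mathcal O_X$ and is automatically locally free of rank $2$. At a point of $C$ the Serre criterion (valid for lci subschemes of codimension $2$, in any characteristic) says that $\mathcal F$ is locally free there precisely when the image of $e$ under the edge homomorphism
$$Ext^1(\mathcal I_C(2),\mathcal O_X)\rightarrow H^0(X,\mathcal Ext^1(\mathcal I_C(2),\mathcal O_X))$$
of the local-to-global spectral sequence (already written, after twisting, in (\ref{for_morph})) generates the stalk of $\mathcal Ext^1(\mathcal I_C(2),\mathcal O_X)$. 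This is word for word the second assertion of Proposition \ref{p_coh_curve_1}(iv), so local freeness follows. The only point deserving attention is that $C$ is merely lci — a chain of three lines together with a conic — rather than smooth, which is why the lci form of the construction is needed; but $C$ was built in Section \ref{constr_curve} to be lci of pure dimension $1$, so no additional hypothesis intervenes.

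Next I would establish auto-duality by a determinant computation. Since $C$ has codimension $2$, its ideal sheaf satisfies $c_1(\mathcal I_C)=0$, whence $c_1(\mathcal I_C(2))=c_1(\mathcal O_X(2))$; taking first Chern classes in (\ref{extension}) gives $c_1(\mathcal E(1))=c_1(\mathcal O_X(2))$, i.e. $\det\mathcal E(1)\simeq \mathcal O_X(2)$. As $\det(\mathcal E\otimes\mathcal O_X(1))\simeq\det\mathcal E\otimes\mathcal O_X(2)$ for a rank $2$ bundle, this yields $\det\mathcal E\simeq \mathcal O_X$. For rank $2$ the wedge pairing $\mathcal E\otimes\mathcal E\rightarrow \det\mathcal E$ is perfect and furnishes a canonical isomorphism $\mathcal E^\vee\simeq \mathcal E\otimes(\det\mathcal E)^{-1}$; with $\det\mathcal E\simeq \mathcal O_X$ this reads $\mathcal E^\vee\simeq \mathcal E$, so $\mathcal E$ is auto-dual.

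Finally, uniqueness is a formal consequence of $Ext^1(\mathcal I_C,\mathcal O_X(-2))\simeq k$ being one-dimensional. Any two nonzero extension classes differ by a scalar $\lambda\in k^\times$, and multiplying the class of an extension by $\lambda$ (pushout along $\lambda\cdot\mathrm{id}_{\mathcal O_X}$) produces an isomorphic middle term; hence $\mathcal E(1)$, and therefore $\mathcal E$, is determined up to isomorphism independently of the chosen generator $e$. Assembling these three points gives the existence of a unique auto-dual rank $2$ vector bundle $\mathcal E$ fitting into (\ref{extension}).
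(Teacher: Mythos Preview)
Your proposal is correct and follows the same approach as the paper, which does not give a proof but simply invokes the codimension $2$ Serre construction of \cite{schnell} together with Proposition \ref{p_coh_curve_1}(iv). You have spelled out the three verifications (local freeness via the generating condition on $\mathcal Ext^1$, auto-duality via $\det\mathcal E\simeq\mathcal O_X$, uniqueness from $\dim_k Ext^1=1$) that the paper leaves implicit in that citation.
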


Vector bundles obtained by this method have the following properties which were proved in \cite[Lemmas 2.1, 2.7, 2.8 and Proposition 2.6]{Mar-Tik} for vector bundles constructed by the same method but starting from smooth normal elliptic quintic curves:
\begin{proposition}\label{p_coh_vect} Let $\mathcal E$ be a rank $2$ vector bundle obtained from a singular linearly normal elliptic quintic curve applying Proposition \ref{serre}. Then we have:\\
\indent (i) $h^0(\mathcal E(-1))=0$, $h^0(\mathcal E)=0$, $h^0(\mathcal E(1))=6$ and $h^i(\mathcal E(-1))=0=h^i(\mathcal E(1))$ $\forall i\geq 1$;\\
\indent (ii) $\mathcal E(1)$ is a stable vector bundle generated by its global sections so that the zero locus of any non zero global section $s$ of $\mathcal E(1)$ is a local complete intersection curve $C_s$ on $X$ with trivial dualizing sheaf, whose ideal sheaf satisfies all the equalities of Proposition \ref{p_coh_curve_1} and fits in an exact sequence $0\rightarrow \mathcal O_X\rightarrow \mathcal E(1)\rightarrow \mathcal I_{C_s}(2)\rightarrow 0$;\\
\indent (iii) $h^0(\mathcal E\otimes \mathcal E)=1$, $h^1(\mathcal E\otimes \mathcal E)=5$, $h^2(\mathcal E\otimes \mathcal E)=0=h^3(\mathcal E\otimes \mathcal E)$;\\
\indent (iv) $h^0(\mathcal N_{C/X})=10$, $h^1(\mathcal N_{C/X})=0$. In particular, the Hilbert scheme $Hilb_{5n}(X/k)$, which parametrizes $1$-dimensional subschemes of degree $5$ and arithmetic genus $1$ in $X$, is smooth of dimension $10$ at the point $[C]$.\\
\indent (v) The morphism $\mathbb P^5\simeq\mathbb P(H^0(\mathcal E(1)))\rightarrow Hilb_{5n}(X/k)$, that associates to any non-zero global section of $\mathcal E(1)$ the subscheme of $X$ defined by zero locus, is injective.  
\end{proposition}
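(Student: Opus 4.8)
The plan is to read everything off from the defining sequence (\ref{extension}), its twists, the self-duality $\mathcal E\simeq\mathcal E^\vee$ (which holds because taking determinants in (\ref{extension}) gives $\det\mathcal E\simeq\mathcal O_X$), Serre duality on $X$ with $\omega_X\simeq\mathcal O_X(-2)$, and the cohomology of the twisted ideal sheaves recorded in Proposition \ref{p_coh_curve_1}. For (i), twisting (\ref{extension}) by $\mathcal O_X(-2)$ yields $0\to\mathcal O_X(-2)\to\mathcal E(-1)\to\mathcal I_C\to 0$; since $h^i(\mathcal O_X(-2))$ vanishes except for $h^3=1$ and $h^i(\mathcal I_C)$ vanishes except for $h^2=1$ (Proposition \ref{p_coh_curve_1}(i)), the long exact sequence gives $h^0(\mathcal E(-1))=h^1(\mathcal E(-1))=0$, and then self-duality together with Serre duality ($h^i(\mathcal E(-1))=h^{3-i}(\mathcal E(-1))$) forces $h^2(\mathcal E(-1))=h^3(\mathcal E(-1))=0$, so $\mathcal E(-1)$ is acyclic. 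The values $h^0(\mathcal E)=0$ and $h^\bullet(\mathcal E(1))$ then follow by twisting (\ref{extension}) by $\mathcal O_X(-1)$ and by $\mathcal O_X$, using $h^0(\mathcal I_C(1))=0$ and the values of $h^i(\mathcal I_C(2))$ from Proposition \ref{p_coh_curve_1}(ii),(iii).

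For (ii), since $\mathrm{Pic}(X)=\mathbb Z[\mathcal O_X(1)]$ and $\det\mathcal E(1)=\mathcal O_X(2)$, a destabilizing line subsheaf of $\mathcal E(1)$ would be $\mathcal O_X(a)$ with $a\ge 1$, hence would give a nonzero section of $\mathcal E$; as $h^0(\mathcal E)=0$, the bundle $\mathcal E(1)$ (and so $\mathcal E$) is stable, in particular simple. For global generation I would check that $\mathcal E$ is $1$-regular in the sense of Castelnuovo--Mumford with respect to $\mathcal O_X(1)$, i.e. $H^1(\mathcal E)=H^2(\mathcal E(-1))=H^3(\mathcal E(-2))=0$: the middle term is part of the acyclicity of $\mathcal E(-1)$, the last is Serre-dual to $h^0(\mathcal E)=0$, and $H^1(\mathcal E)=0$ follows from $H^1(\mathcal I_C(1))=0$, which holds because $H^0(\mathcal O_X(1))\to H^0(\mathcal O_C(1))$ is an isomorphism of $5$-dimensional spaces (injective by $h^0(\mathcal I_C(1))=0$). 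Castelnuovo--Mumford regularity, being characteristic-free, then yields global generation of $\mathcal E(1)$. Stability also forbids any section vanishing in codimension $1$, so the zero locus of any nonzero $s\in H^0(\mathcal E(1))$ is a purely $2$-codimensional locally complete intersection curve $C_s$ with $\mathcal N_{C_s/X}\simeq\mathcal E(1)|_{C_s}$ and, by adjunction, $\omega_{C_s}\simeq(\det\mathcal E(1))^{-1}\otimes\omega_X|_{C_s}\simeq\mathcal O_{C_s}$.

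For (iii), (iv) and (v), tensoring (\ref{extension}) by $\mathcal E(-1)$ and using the acyclicity of $\mathcal E(-1)$ identifies $H^i(\mathcal E\otimes\mathcal E)\simeq H^i(\mathcal E(1)\otimes\mathcal I_C)$, and the sequence $0\to\mathcal E(1)\otimes\mathcal I_C\to\mathcal E(1)\to\mathcal N_{C/X}\to 0$ ties these groups to $H^i(\mathcal N_{C/X})$. Simplicity gives $h^0(\mathcal E\otimes\mathcal E)=\dim\mathrm{Hom}(\mathcal E,\mathcal E)=1$; Riemann--Roch on the arithmetic-genus-$1$ curve $C$ gives $\chi(\mathcal N_{C/X})=\deg(\det\mathcal E(1)|_C)=10$; and $H^2(\mathcal N_{C/X})=0$ forces $h^3(\mathcal E\otimes\mathcal E)=0$. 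Granting the vanishing $H^1(\mathcal N_{C/X})=0$, one gets $h^0(\mathcal N_{C/X})=10$ (hence smoothness of $\mathrm{Hilb}_{5n}(X/k)$ at $[C]$, part (iv)), and feeding this into the long exact sequence yields $h^1(\mathcal E\otimes\mathcal E)=10-6+1=5$ and $h^2(\mathcal E\otimes\mathcal E)=H^1(\mathcal N_{C/X})=0$, which is (iii). Part (v) is then immediate: the sections of $\mathcal E(1)$ vanishing on a given $C$ are exactly $H^0(\mathcal E(1)\otimes\mathcal I_C)\simeq H^0(\mathcal E\otimes\mathcal E)$, a line, so $s$ is determined by $C_s$ up to a scalar and $\mathbb P(H^0(\mathcal E(1)))\to\mathrm{Hilb}_{5n}(X/k)$ is injective.

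The main obstacle is the one point genuinely new relative to the smooth-curve treatment of Markushevich--Tikhomirov, namely $H^1(\mathcal N_{C/X})=0$ for the reducible nodal curve $C$. Since $C$ is Gorenstein with $\omega_C\simeq\mathcal O_C$, Serre duality on $C$ and self-duality give $H^1(\mathcal N_{C/X})\simeq H^0(\mathcal N_{C/X}^\vee)^\vee=H^0(\mathcal E(-1)|_C)^\vee$. I would bound $H^0(\mathcal E(-1)|_C)$ using the injection $H^0(\mathcal E(-1)|_C)\hookrightarrow\bigoplus_i H^0(\mathcal E(-1)|_{C_i})$ coming from the normalization of $C$, where the $C_i$ are the rational components (the three lines and the conic). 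On each $\mathbb P^1$-component global generation of $\mathcal E(1)$ forces a splitting $\mathcal O(a_i)\oplus\mathcal O(b_i)$ with $a_i,b_i\ge 0$, and $\mathcal E(-1)|_{C_i}$ then has no sections precisely when the splitting is balanced enough (type $(1,1)$ on a line, and not $(4,0)$ on the conic). The delicate step is to rule out the unbalanced splittings on these distinguished rational curves: stability of $\mathcal E$ controls the splitting only on a general line of $X$, not on the special components of $C$, so here the fine geometry of the construction (the genericity of the chosen lines from Propositions \ref{at_most_6} and \ref{csq_no_plane}, together with the vanishings of Proposition \ref{p_coh_curve_1}) must be brought in. This is the crux of the proof; once it is established, all the numerical statements follow by the Euler-characteristic bookkeeping above.
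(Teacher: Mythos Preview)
Your treatment of (i), (ii) and (v) is essentially the paper's: twists of (\ref{extension}) plus Proposition~\ref{p_coh_curve_1} and Serre duality for (i), Castelnuovo--Mumford regularity for global generation and $h^0(\mathcal E)=0$ for stability in (ii), and simplicity of $\mathcal E$ for (v). (One slip: adjunction gives $\omega_{C_s}\simeq\det(\mathcal N_{C_s/X})\otimes\omega_X|_{C_s}=\det\mathcal E(1)\otimes\omega_X|_{C_s}$, not the inverse; the conclusion $\omega_{C_s}\simeq\mathcal O_{C_s}$ is unaffected.)

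The genuine divergence is at the step you single out as the crux, the vanishing $h^1(\mathcal N_{C/X})=0$. Your proposed approach---controlling the splitting type of $\mathcal E(1)$ on each rational component of $C$ and ruling out unbalanced types---would indeed require the delicate component-by-component genericity analysis you describe, and you leave it unfinished. The paper (following \cite[Lemma 2.7]{Mar-Tik}) bypasses this completely. Since $\mathcal E$ is self-dual, $\mathcal E(-1)|_C=\mathcal N_{C/X}^\vee\simeq\mathcal N_{C/X}(-2)$, and the normal bundle inclusion $\mathcal N_{C/X}\hookrightarrow\mathcal N_{C/\mathbb P^4_k}$ gives
\[
H^0(\mathcal E(-1)|_C)=H^0(\mathcal N_{C/X}(-2))\subset H^0(\mathcal N_{C/\mathbb P^4_k}(-2)).
\]
The right-hand side is zero by Hulek \cite[Proposition V.2.1]{Hulek} (equivalently $h^1(\mathcal N^*_{C/\mathbb P^4_k}(2))=0$), whose second proof uses only that $C\subset\mathbb P^4_k$ is a locally complete intersection linearly normal elliptic quintic and $\mathrm{char}(k)\neq 2$---no smoothness, no genericity of the components, no case analysis. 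This is exactly the input the paper isolates when it says the arguments of \cite{Mar-Tik} go through; once you have it, (iv) and then (iii) follow from your own Euler-characteristic bookkeeping. So your ``main obstacle'' is not a new phenomenon for the singular curve: it dissolves as soon as you look at the normal bundle in the ambient $\mathbb P^4_k$ rather than at $\mathcal E|_{C_i}$ on each component.
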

\begin{proof} (i) The computation of the dimension of these cohomology groups is immediate from (\ref{extension}) and Proposition \ref{p_coh_curve_1} using long exact sequences with appropriate twists and Serre's duality.\\
\indent (ii) We have seen that $h^1(\mathcal E)=0$, $h^2(\mathcal E(-1))=0$ and $h^3(\mathcal E(-2))=h^0(\mathcal E)=0$ so by Mumford-Castelnuovo criterion, $\mathcal E(1)$ is generated by its global sections. There is no assumption on the characteristic of the base field in the arguments used in \cite[Proposition 2.6]{Mar-Tik} to prove the stability of $\mathcal E(1)$ so the proof adapts to our setting.\\
\indent The proofs of items (iii) and (iv) given in \cite[Lemma 2.7]{Mar-Tik} adapt in positive characteristic since it only uses the fact that stability of vector bundles implies their simplicity, Grothendieck-Riemann-Roch theorem and the fact that $h^1(\mathcal N^*_{C/\mathbb P^4_k}(2))=0$ which is still true in our setting since the second proof of this fact given in \cite[Proposition V.2.1]{Hulek} makes no use of the characteristic ($\neq 2$) nor of a better regularity than local complete intersection. Item (v) is \cite[Lemma 2.8]{Mar-Tik} whose proof uses no assumption on the charateristic of the base field.
\end{proof}

\textit{}\\ \\
\indent Define the locally closed subscheme $\mathcal H$ of $Hilb_{5n}(X/k)$: 
\begin{equation}
\mathcal H=\left\{
\begin{aligned}
&[Z]\in Hilb_{5t}(X/k),\ (i)\ Z\ \mathrm{is\ a\ locally\ complete\ intersection\ of\ pure}\\
& \mathrm{dimension}\ 1, (ii)\ h^1(\mathcal I_Z)=0=h^0(\mathcal I_Z(1))=h^1(\mathcal I_Z(1))\ (hence\ h^0(\mathcal O_Z)=1),\ \\
&(iii)\ h^1(\mathcal I_Z(2))=0=h^2(\mathcal I_Z(2))\ (hence\ h^0(\mathcal I_Z(2)=5),\ (iv)\ \omega_Z\simeq \mathcal O_Z
\end{aligned}
\right\}.
\end{equation}
The $6$-dimensional family of singular linearly normal elliptic quintic curves constructed in the previous section is contained in $\mathcal H_s$. Moreover, the subschemes parametrized by $\mathcal H$ are connected (since $h^0(\mathcal O_Z)=1$) local complete intersection curves with trivial dualizing sheaf and whose ideal sheaves have all the properties needed to guarantee that the coherent sheaf arising from Serre's construction in codimension $2$ (see Proposition \ref{serre}) is a rank $2$ vector bundle on $X$ satisfying the properties of Proposition \ref{p_coh_vect}. In particular, by item (iv) of Proposition \ref{p_coh_vect}, $\mathcal H$ is a smooth $10$-dimensional quasi-projective $k$-scheme. So, using the pull-back $\mathcal Z$ of the universal sheaf over $Hilb_{5n}(X/k)$ on $\mathcal H$, we can define an Abel-Jacobi morphism $\phi_{\mathcal Z}:\mathcal H\rightarrow J(X)$. We have the following theorem which proves that the condition (*) of the introduction is satisfied by cubic threefolds: 

\begin{theoreme}\label{p_param} $\phi_\Gamma$ is smooth and its general fiber is isomorphic to $\mathbb P^5_k$.
\end{theoreme}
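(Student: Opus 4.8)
The plan is to study $\phi_{\mathcal Z}$ through its differential, using the rank $2$ bundles produced by the Serre construction (Proposition~\ref{serre}) to exhibit the fibres. Recall that $\mathcal H$ is smooth of dimension $10$ at every point (Proposition~\ref{p_coh_vect}(iv)), while $J(X)$ is an abelian variety of dimension $5$, so the expected fibre dimension is exactly $5$. First I would fix a general point $[C]\in\mathcal H$, let $\mathcal E$ be the associated auto-dual bundle, and recall that the zero loci of the non-zero sections of $\mathcal E(1)$ sweep out a $\mathbb P^5=\mathbb P(H^0(X,\mathcal E(1)))$ which, by Proposition~\ref{p_coh_vect}(ii),(v), embeds in $\mathcal H$ and contains $[C]$. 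Since $\mathcal E$ is recovered from $C$ by uniqueness in Proposition~\ref{serre}, these projective spaces partition $\mathcal H$. As any morphism from $\mathbb P^5$ to an abelian variety is constant (projective space is covered by rational curves, each of which maps to a point of $J(X)$), the restriction of $\phi_{\mathcal Z}$ to each such $\mathbb P^5$ is constant; hence every one of these $\mathbb P^5$'s is contained in a fibre.

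It remains to show that the fibres are exactly these $\mathbb P^5$'s and that $\phi_{\mathcal Z}$ is smooth; both follow from a single tangent-space computation, which is the technical core. Since $C$ is the zero locus of a regular section $s$ of $\mathcal E(1)$, its normal bundle is $\mathcal N_{C/X}\simeq \mathcal E(1)_{|C}$, so $T_{[C]}\mathcal H = H^0(C,\mathcal E(1)_{|C})$. Restricting sections from $X$ to $C$ and using $h^0(X,\mathcal E(1))=6$ and $H^1(X,\mathcal E(1))=0$ (Proposition~\ref{p_coh_vect}(i)), I would obtain a short exact sequence
$$0\rightarrow T_{[C]}\mathbb P^5\rightarrow H^0(C,\mathcal N_{C/X})\rightarrow H^1(X,\mathcal E(1)\otimes \mathcal I_C)\rightarrow 0,$$
whose left-hand term is the image of $H^0(X,\mathcal E(1))/\langle s\rangle$ (the first-order variations of $s$, which move $C$ inside the fibre-$\mathbb P^5$). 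Tensoring the defining sequence (\ref{extension}) by $\mathcal E(1)^\vee\simeq\mathcal E(-1)$ (auto-duality) and invoking the vanishings $H^i(X,\mathcal E(-1))=0$ for $i\geq1$ of Proposition~\ref{p_coh_vect}(i) identifies the right-hand term with $H^1(X,\mathcal E\otimes\mathcal E)\simeq \mathrm{Ext}^1(\mathcal E,\mathcal E)$, of dimension $5$ by Proposition~\ref{p_coh_vect}(iii). Thus the quotient of $T_{[C]}\mathcal H$ by the tangent space to the fibre-$\mathbb P^5$ is precisely the space of first-order deformations of the bundle $\mathcal E$.

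By the first paragraph, $d\phi_{\mathcal Z}$ annihilates $T_{[C]}\mathbb P^5$ and hence factors through a map $\overline{d\phi}:\mathrm{Ext}^1(\mathcal E,\mathcal E)\rightarrow T_0J(X)$ between two $5$-dimensional spaces, recalling $T_0J(X)\simeq H^2(X,\Omega_{X/k})$. The hard part will be to prove that $\overline{d\phi}$ is an isomorphism, i.e. that the infinitesimal Abel-Jacobi invariant of the varying bundle is nondegenerate; over $\mathbb C$ this is the content of the corresponding computation in \cite{Mar-Tik}. I expect to realise $\overline{d\phi}$ as cup product with the Atiyah class of $\mathcal E$ followed by the trace (a description valid in any characteristic, since $\phi_{\mathcal Z}$ factors through $\mathcal E\mapsto c_2(\mathcal E(1))=[C]$), and to deduce its nondegeneracy from the vanishings and dimension counts of Proposition~\ref{p_coh_vect}(i),(iii), if necessary reducing to characteristic $0$ via the smooth lifting of $X$ already used above.

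Finally I would conclude as follows. Surjectivity of $d\phi_{\mathcal Z}$ at a general point between the smooth varieties $\mathcal H$ and $J(X)$ yields, by the infinitesimal criterion (valid in positive characteristic), smoothness of $\phi_{\mathcal Z}$ there, and in particular dominance; so the general fibre is smooth of pure dimension $5$. Since this fibre contains the proper, irreducible, $5$-dimensional subvariety $\mathbb P^5$, its component through $[C]$ equals $\mathbb P^5$; and injectivity of $\overline{d\phi}$ shows that nearby bundles map to distinct points of $J(X)$, so over a general point of the image only one of the partitioning $\mathbb P^5$'s occurs. Hence the general fibre of $\phi_{\mathcal Z}$ is isomorphic to $\mathbb P^5_k$, as claimed.
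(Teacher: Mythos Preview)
Your tangent-space computation is correct and elegant: the exact sequence $0\to T_{[C]}\mathbb P^5\to H^0(C,\mathcal N_{C/X})\to \mathrm{Ext}^1(\mathcal E,\mathcal E)\to 0$ is a clean way to see the fibre/base decomposition, and it differs from the paper's approach, which proves surjectivity of $d\phi_{\mathcal Z}$ directly via Welters' ``tangent bundle sequence''. Concretely, the paper dualises and shows $(T\phi_{\mathcal Z})^*:H^1(X,\Omega^2_{X/k})\to H^0(C,\mathcal N_{C/X})^*$ is injective by factoring it as $\beta_C\circ r_C\circ R^{-1}$ and checking injectivity of each piece from the vanishings $h^0(\mathcal I_C(1))=0$ and $h^0(C,\mathcal N_{C/\mathbb P^4}(-2))=0$. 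This gives smoothness of $\phi_{\mathcal Z}$ at \emph{every} point of $\mathcal H$, not just a general one, and avoids your unfinished step of identifying $\overline{d\phi}$ with an Atiyah-class cup product and proving it nondegenerate. Your route could certainly be made to work, but as written the key nondegeneracy is only asserted, not proved.

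There is, however, a genuine gap in your final paragraph. Even granting that $\overline{d\phi}:\mathrm{Ext}^1(\mathcal E,\mathcal E)\to T_0J(X)$ is an isomorphism, you only obtain that the induced map from the (local) moduli of bundles to $J(X)$ is \'etale; \'etaleness gives local injectivity, not generic degree one. Your sentence ``injectivity of $\overline{d\phi}$ shows that nearby bundles map to distinct points of $J(X)$, so over a general point of the image only one of the partitioning $\mathbb P^5$'s occurs'' conflates local and global injectivity: a priori several non-isomorphic bundles $\mathcal E$ could give the same Abel--Jacobi value, and the general fibre would then be a disjoint union of several copies of $\mathbb P^5$. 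Settling this is exactly the hard global step; over $\mathbb C$ it required the full analysis of \cite{Mar-Tik}, \cite{Ili-Mar}, \cite{Druel} identifying the moduli space with $J(X)$. The paper does \emph{not} redo this in positive characteristic: instead it lifts $X$ to a cubic over a DVR, forms the relative Abel--Jacobi map, takes its Stein factorisation $\Gamma_{\phi}\to\mathcal M\to\mathcal J$, and uses the characteristic-zero result to see that $\mathcal M_K\to\mathcal J_K$ is an isomorphism; regularity of $\mathcal J$ then lets the inverse birational map extend across the generic point of the special fibre, forcing the special fibre of $\mathcal M$ to have a component birational to $J(X)$, whence the general fibre of $\phi_{\mathcal Z}$ over $k$ is a single $\mathbb P^5$. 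You will need either this lifting/Stein-factorisation argument or some substitute (e.g.\ a direct degree computation) to close the gap.
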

\begin{proof} To prove the smoothness of $\phi_{\mathcal Z}$, we just have to see that the differential of the Abel-Jacobi morphism $T\phi_{\mathcal Z}:T_{[C]}\mathcal H\simeq H^0(C,\mathcal N_{C/X})\rightarrow T_{\phi_{\mathcal Z}([C])}J(X)\simeq H^2(X,\Omega_{X/k})$ is surjective for any $[C]\in \mathcal H$. To do so, we proceed as in \cite[Theorem 5.6]{Mar-Tik} trying the prove that the dual of this map $(T\phi_{\mathcal Z})^*:H^2(X,\Omega_{X/k})^*\simeq H^1(X,\Omega_{X/k}^2)\rightarrow H^0(C,\mathcal N_{C/X})^*$ is injective. We use the technique of "tangent bundle sequence" following \cite[Section 2]{We}. It is presented there for a smooth subvariety $C\hookrightarrow X$ and in characteristic zero but the arguments to prove the following lemma make no use, for the subvariety $C$, of a greater regularity than local complete intersection (so that $\mathcal N_{C/X}$ and $\mathcal N_{C/\mathbb P^4_k}$ are vector bundles on $C$ and we can use Serre duality) nor of the characteristic ($\neq 2$). So $(T\phi_{\mathcal Z})^*$ admits the following description:\\
\begin{lemme}\textit{(\cite[Lemma 2.8]{We}).} The following diagram is commutative
$$\xymatrix{H^0(X,\mathcal N_{X/\mathbb P^4_k}\otimes \omega_X)\ar[r]^R\ar[d]_{r_C} &H^1(X,\Omega^2_{X/k})\ar[d]^{(T\phi_{\mathcal Z})^*}\\
H^0(C,\mathcal N_{X/\mathbb P^4_k}\otimes \omega_{X|C})\ar[r]^{\beta_C} &H^0(C,\mathcal N_{C/X})^*}$$
where $r_C$ is the restriction map to $C$, $\beta_C$ is part of the exact sequence $$\cdots\rightarrow H^0(C,\mathcal N_{C/\mathbb P^4_k}\otimes \omega_X)\rightarrow H^0(C,\mathcal N_{X/\mathbb P^4_k}\otimes \omega_{X|C})\stackrel{\beta_C}{\rightarrow}H^1(C,\mathcal N_{C/X}\otimes \omega_X)\simeq H^0(C,\mathcal N_{C/X})^*$$ from the long exact sequence arising from the short exact sequence $$0\rightarrow\omega_X\otimes \mathcal N_{C/X}\rightarrow \omega_X\otimes \mathcal N_{C/\mathbb P^4_k}\rightarrow \omega_X\otimes \mathcal N_{X/\mathbb P^4_k |C}\rightarrow 0$$ and $R$ is the first connecting morphism in the long exact sequence associated to $$0\rightarrow \Omega_{X/k}^2\otimes \mathcal N_{X/\mathbb P^4_k}^*\rightarrow \Omega_{\mathbb P^4_k/k |X}\rightarrow \omega_X\rightarrow 0$$ coming from the exterior cube of the short exact sequence $0\rightarrow \mathcal N_{X/\mathbb P^4_k}^*\rightarrow \Omega_{\mathbb P^4_k|X}\rightarrow \Omega_{X/k}\rightarrow 0$. 
\end{lemme}
Now, $R$ is an isomorphism since Bott formula for $\mathbb P^4_k$ imply the vanishing of $H^0(X,\Omega^3_{\mathbb P^4_k/k|X}(3))$ and $H^1(X,\Omega^3_{\mathbb P^4_k/k|X}(3))$. The kernel of $r_C$ is $H^0(X,\mathcal N_{X/\mathbb P^4_k}\otimes \omega_X\otimes \mathcal I_{C/X})=H^0(X,\mathcal I_{C/X}(1))$ which is $0$ by assumption. As for the kernel of $\beta_C$, we have already seen in the proof of Proposition \ref{p_coh_vect}, that $H^1(C,\mathcal N^*_{C/X}(2))=H^0(C, \mathcal N_{C/X}(-2))$ is $0$ since the second proof given in \cite[Proposition V.2.1]{Hulek} for the vanishing of $h^1(C,\mathcal N^*_{C/\mathbb P^4_k}(2))=h^0(C,\mathcal N_{C/\mathbb P^4_k}(-2))$ makes no use of the characteristic $\neq 2$ nor of a greater regularity than local complete intersection and $\mathcal N_{C/X}(-2)\subset \mathcal N_{C/\mathbb P^4_k}(-2)$; so $\beta_C$ is also injective. Hence $T\phi_{\mathcal Z}$ is surjective at all points of $\mathcal H$ i.e. $\phi_{\mathcal Z}$ is smooth on $\mathcal H$.\\
\indent Since $\phi_{\mathcal Z}$ is smooth, any nonempty closed fiber of $\phi_{\mathcal Z}$ is a disjoint union of smooth $k$-varieties. In fact it is the union of copies of $\mathbb P^5_k$: using the curve $C$ parametrized by a point $[C]$ of this fiber, we can construct a rank $2$ vector bundle $\mathcal E$ having the properties of Proposition \ref{p_coh_vect}; in particular $\mathbb P(H^0(\mathcal E(1)))\simeq \mathbb P^5_k$ (\ref{p_coh_vect} (i)) and the inclusion of \ref{p_coh_vect} (v) $\mathbb P(H^0(\mathcal E(1))) \hookrightarrow Hilb_{5n}(X/k)$ has values in $\mathcal H$ since, by \ref{p_coh_vect} (ii), the curves defined by the zero locus of elements of $\mathbb P(H^0(\mathcal E(1)))$ satisfy the conditions defining $\mathcal H$. Since a morphism from a projective space to an abelian variety is constant, we see that $\phi_{\mathcal Z}(\mathbb P(H^0(\mathcal E(1))))=\phi_{\mathcal Z}([C])$ i.e. $\mathbb P^5_k=\mathbb P(H^0(\mathcal E(1)))$ is included in the fiber $\phi^{-1}_{\mathcal Z}(\phi_{\mathcal Z}([C]))$ and $[C]\in \mathbb P^5_k=\mathbb P(H^0(\mathcal E(1)))$ (by exact sequence \ref{extension}). So every nonempty closed fiber of $\phi_{\mathcal Z}$ is the disjoint union of $\mathbb P^5_k$.\\

\indent Now, let $\pi: \mathfrak X\rightarrow S=Spec(R)$ be a smooth projective lifting of the smooth cubic $X\subset \mathbb P^4_k$ to characteristic $0$ over a DVR. Let us denote by $K=Frac(R)$, the generic point of $S$ and $s$ the closed point. We have also an abelian scheme over $S,\ \mathcal J = Pic^0(\mathcal F(\mathfrak X)/S),$ the relative Picard scheme of the relative Fano surface of $\pi$, whose geometric fibers are isomorphic to the intermediate jacobian of the corresponding smooth cubic hypersurfaces. Let $\mathcal H(\mathfrak X/S)$ be the locally closed subscheme of the relative Hilbert scheme $Hilb_{5t}(\mathfrak X/S)$ defined as
\begin{equation}
\left\{
\begin{aligned}
&[Z]\in Hilb_{5t}(\mathfrak X/S),\ (i)\ Z\ \mathrm{is\ a\ locally\ complete\ intersection\ of\ pure}\\
& \mathrm{dimension}\ 1, (ii)\ h^1(\mathcal I_Z)=0=h^0(\mathcal I_Z(1))=h^1(\mathcal I_Z(1))\ (hence\ h^0(\mathcal O_Z)=1),\ \\
&(iii)\ h^1(\mathcal I_Z(2))=0=h^2(\mathcal I_Z(2))\ (hence\ h^0(\mathcal I_Z(2)=5),\ (iv)\ \omega_Z\simeq \mathcal O_Z
\end{aligned}
\right\}.
\end{equation} whose fiber over $s$ is $\mathcal H$ and fiber over $K$ is the subscheme of the Hilbert scheme $Hilb_{5n}(\mathfrak X_{\eta}/ K)$ defined likewise which, according to \cite[Corollary 5.5]{Mar-Tik}, is also smooth of dimension $10$. It is easy to see that a singular linearly normal elliptic quintic curve, as those constructed in \ref{constr_curve}, lifts in charateristic $0$ over $R$ so $\mathcal H(\mathfrak X/S)$ has a $S$-point that we can use as a reference point to define a morphism $\phi_{\mathcal Z_S}:\mathcal H(\mathfrak X/S)\rightarrow \mathcal J$ (using the universal family $\mathcal Z_S$ of $Hilb_{5t}(\mathfrak X/S)$) inducing the Abel-Jacobi morphisms over $s$ and $K$. 

Let $\Gamma_{\phi_{\mathcal Z_S}}$ be the closure of the graph of $\phi_{\mathcal Z_S}$ in the $S$-projective scheme $Hilb_{5n}(\mathfrak X/S)\times_S\mathcal J$. Then the second projection $p_S:\Gamma_{\phi_{\mathcal Z_S}}\rightarrow \mathcal J$ yields a projective morphism inducing the Abel-Jacobi morphism $\phi_{\mathcal Z_S}$ on a dense open subscheme of $\Gamma_{\phi_{\mathcal Z_S}}$ which is surjective on $S$. By Stein factorization theorem, there is a proper $S$-scheme $\mathcal M$ such that $p_S$ factorizes as $\Gamma_{\phi_{\mathcal Z_S}}\stackrel{\Phi}{\rightarrow}\mathcal M\stackrel{\Psi}{\rightarrow} \mathcal J$ with $\Psi$ a finite morphism and $\Phi$ a morphism with connected fibers. By work of Iliev, Markutchevich and Tikhomirov (\cite[Theorem 5.6]{Mar-Tik} and \cite[Theorem 3.2]{Ili-Mar}; see also Druel \cite[Theorem 1]{Druel}), the general fiber of $p_K$ is $\mathbb P^5_K$ so that $\Psi_K:\mathcal M_K\rightarrow \mathcal J_k$ is an isomorphism. So $\Psi$ is a birational morphism. Let us denote $\sigma:\mathcal J\dashrightarrow  \mathcal M$ the inverse map. Since $\mathcal J$ is regular, the local ring of any codimension $1$ point of $\mathcal J$ is a 
DVR and since $\mathcal M$ is proper, $\sigma$ is defined on any codimension $1$ point. In particular, it is defined at the generic point of $J(X)=\mathcal J_s$; hence $J(X)$ is birational to a component $M_0$ of $\mathcal M_s$. Let us denote $B=\Phi_s^{-1}(M_0)$. Then $p_s:B\rightarrow J(X)$ is dominant with general fiber isomorphic to $\mathbb P^5$ and on a open subset of $B$, it is the Abel-Jacobi morphism given by the pull-back of the universal family of $Hilb_{5t}(X/k)$.
\end{proof}

\section*{Acknowledgments}
I am grateful to my advisor Claire Voisin for introducing me to this topic and for her patient guidance. I also heartly thank Alena Pirutka for her invaluable technical assistance and her avaibility during this work. I am grateful to the referee for his careful reading and criticism that has help to improve the paper. Finally, I am grateful to the gracious Lord for His care.

\noindent \begin{tabular}[t]{l}
\textit{rene.mboro@polytechnique.edu}\\
CMLS, Ecole Polytechnique, CNRS, Universit\'e Paris-Saclay\\
91128 PALAISEAU C\'edex,\\
FRANCE.\\
\end{tabular}\\
\end{document}